\title{Well Quasi-Orders and the Functional Interpretation}
\author{Thomas Powell}
\date{Preprint, \today}
\begin{document}

\maketitle

\begin{abstract}The purpose of this article is to study the role of G\"{o}del's functional interpretation in the extraction of programs from proofs in well quasi-order theory. The main focus is on the interpretation of Nash-Williams' famous minimal bad sequence construction, and the exploration of a number of much broader problems which are related to this, particularly the question of the constructive meaning of Zorn's lemma and the notion of recursion over the non-wellfounded lexicographic ordering on infinite sequences.\end{abstract}

\section{Introduction}
\label{sec-intro}

When I was invited to contribute a chapter to this volume, I felt that I should write something that would reflect, as much as possible, the extraordinary richness of the theory of well quasi-orders. Anyone present at the Dagstuhl Seminar of January 2016 would have experienced first hand how this rather innocent looking mathematical object plays a central role in so many seemingly disparate areas, ranging from proof theory, computability theory and reverse mathematics on the one hand to to term rewriting, program verification and the world of automata and formal languages on the other. While I could never do justice to such diversity in one article, my hope was to at least explore a variety of interesting problems in my own field which arise from the study of well quasi-orders.

I decided, therefore, to write an essay on G\"{o}del's functional interpretation, and the role it plays in making constructive sense of well quasi-orders. I have chosen to organise the essay around a somewhat superficial challenge, namely the development of a program which realizes Higman's lemma for boolean alphabets:
\begin{quote}\textsc{Problem.} Write a program $\Phi$ which takes as input an infinite sequence $u$ of words over a two letter alphabet, and returns a pair of indices $i<j\in\NN$ such that $u_i$ is embedded in $u_j$.\end{quote}
Of course, as long as one has proven that two such indices must exist one could simply write a program which carries out a blind search until they are found! However, I am interested in the question of how one can formally construct a subrecursive program which constitutes a computational analogue of Nash-Williams' famous minimal bad sequence construction - an elegant combinatorial idea which appears throughout well quasi-order theory.

It is important to stress that this relatively simple problem provides merely a narrative framework: My ulterior motive is to explore a number of much more elusive problems which lurk underneath. So while on the surface we will work towards the construction of our program $\Phi$, the real aim of this essay is to try to address several deeper questions, chief among them being:
\begin{enumerate}

\item\label{item-qi} What is the computational meaning of Zorn's lemma?

\item\label{item-qii} Is it possible to sensibly define recursive functionals on chain-complete partial orders?

\item\label{item-qiii} How can one describe formally extracted programs so that they can be easily understood by a human?

\end{enumerate}
Each of these questions has significance far beyond Higman's lemma, and yet the fact that they are all naturally prompted by our elementary problem is, I believe, testament to the richness inherent to the theory of well quasi-orders.

\subsection{Proof interpretations and well quasi-orders: A brief history}
\label{sec-intro-history}

In 1958, G\"{o}del published a landmark paper \cite{Goedel(1958.0)} which introduced his functional, or `Dialectica' interpretation, which he had already conceived in the 1930s as a response to Hilbert's program and his own incompleteness theorems. Initially, the functional interpretation translated Peano arithmetic to a calculus of primitive recursive functionals in all finite types known as System $\systemT$, thereby reducing the consistency of the former theory to the latter. In modern day parlance, System $\systemT$ is nothing more than a simple functional programming language which permits the construction of higher-type primitive recursive functionals. The soundness of the functional interpretation guarantees that, whenever some statement $A$ is provable in Peano arithmetic, we can extract a total functional program in $\systemT$ which witnesses its translation $A^I$.

The functional interpretation was just one of a number of techniques designed during the mid 20th century to establish relative consistency proofs. Kreisel soon observed that these techniques could be flipped on their head and viewed from a different perspective: namely as tools for extracting computational information from non-constructive proofs \cite{Kreisel(1951.0),Kreisel(1952.0)}. While the significance of this idea was not fully appreciated at the time, in recent decades the application of proof theoretic methods to extract programs from proofs has flourished, and now proof interpretations are primarily used for this purpose. Variants of G\"{o}del's functional interpretation in particular are central to the highly successful `proof mining' program pioneered by Kohlenbach \cite{Kohlenbach(2008.0)}, which has led to new quantitative results in several areas of mathematics. At the same time, the arrival of the computer has meant that the extraction of programs from proofs can be automated, and there are now proof assistants such as \textsc{Minlog} \cite{Minlog} which are dedicated to this, and which implement sophisticated refinements of the traditional proof theoretic techniques.

So where do well quasi-orders feature in all of this? 

The vast majority of proofs in `normal' mathematics use only a very small amount of set theory. Often, proofs of existential theorems in mathematics analysis which officially require choice or comprehension, use it in such a limited way that it doesn't really contribute to the complexity of extracted programs. However, the theory of well quasi-orders contains a number of key theorems which \emph{do} use choice in a crucial way, the most notorious being those such Kruskal's theorem which historically rely on variant of Nash-Williams' minimal bad sequence construction.

As a result, these theorems have become something of a focal point for research in program extraction, as canonical existential statements which come with concise, elegant, but proof theoretically non-trivial classical proofs. The question of the computational meaning of such proofs is so deep that entire theses have been dedicated to it (such as \cite{Murthy(1990.0),Seisenberger(2003.0)}). By now, even comparatively simple results like Higman's lemma have an extensive body of research devoted to them. Thus the theory of well quasi-orders has firmly established a foothold in the world of proof theory, and it is from this perspective that I study them here. 

\subsection{The origins and purpose of this chapter}
\label{sec-intro-origins}

Given the popularity of Higman's lemma among researchers in proof theory, it's perhaps important to outline my own motivation in adding yet another paper to this menagerie.

My interest in well quasi-orders began when I was a doctoral student studying G\"{o}del's functional interpretation. Paulo Oliva suggested to me that Higman's lemma might prove a useful exercise in program extraction via the functional interpretation, as up to that point this had never been done: The majority of attempts at giving a constructive proof of the lemma had utilised some form of realizability instead. So I undertook this challenge and published my work as \cite{Powell(2012.0)}.

While this indeed turned out to be a valuable for me personally, improving my own understanding of the functional interpretation and providing me with a welcome excuse to learn about well quasi-orders, in most other respects I found my work rather unsatisfactory. In order to give a computational interpretation of the instance of dependent choice used in the proof of the theorem, I resorted to the standard technique at one's disposal - a higher-type form of bar recursion. But due to the subtlety of Nash-Williams' construction, the resulting instance of bar recursion is extremely complex, leading to an extracted term whose \emph{operational} behaviour as a program is somewhat obscure, to say the least! While after a certain amount of effort I began to see what the underlying program did, this was still very difficult to describe, and I doubt that anyone who has read \cite{Powell(2012.0)} will have gained any fresh insight into the computational meaning of Higman's lemma.

I believe that the shortcomings of this paper were partly due to my own inexperience at the time, and partly due to the fact that the basic technology for extracting programs from proofs is essentially unchanged since its introduction over half a century ago. While admittedly a range of refinements have been developed, and proof mining in particular has produced a number of extremely powerful metatheorems which guarantee the extractability of low-complexity programs from proofs in specific areas of analysis, these do not really help us when comes to non-constructive proofs in well quasi-order theory which rely in an essential way on dependent choice.

In the years that followed I ended up thinking about much more general problems which were prompted from my analysis of Higman's lemma. In particular, I studied forms of higher-order recursion closely related to Nash-Williams' construction \cite{Powell(2014.0)}, and tried to develop notation systems which allow one to describe extracted programs in a more intuitive way \cite{Powell(2016.0)}. And with the announcement of this book I felt that there was an opportunity for me to revisit my original work in light of these developments.\\

\noindent As I have already emphasised, the `official' goal of building a program $\Phi$ which witnesses Higman's lemma over boolean alphabets is nothing more than an organisational device. Indeed, this is by no means the first place in which such a program has been presented, and I reiterate that real content of this essay lies in the methods which we use to obtain it, and the series of theoretical results which lead up to the final definition of $\Phi$.

Much of the technical groundwork I will present here has been done elsewhere, and this allows me to adopt a lighter style of presentation, in which my priority will be to stress the key points and skim over the heavier details. At the same time I have tried to keep everything as self-contained as possible. So, for example, the reader not familiar with G\"{o}del's functional interpretation will be given the main definition and plenty of intuition on what it means, and should be able to follow later sections without too much confusion.

In the area of program extraction, it is not uncommon to see technical achievements presented with few examples to illustrate them, and concrete case studies which give little insight into the underlying techniques on which they are based (and I have certainly been guilty of both of these at one point or another!). But my aim here is to endeavour to strike a balance between both theory and practice, and as a result I hope that this article will form a pleasant read for both specialists in proof theory as well as those with a more general interest in well quasi-orders. 

\section{Well quasi-orders and Zorn's lemma}
\label{sec-wqo}

Let's begin at the beginning, with the definition of a well quasi-order. There are numerous equivalent formulations of this concept - one of the simplest and most widely seen is the following:
\begin{definition}\label{defn-wqo}A quasi-order $(X,\preceq)$ is a set $X$ equipped with a binary relation $\preceq$ which is reflexive and transitive. It is a well quasi-order (or WQO) if it satisfies the additional property that for any infinite sequence of elements $x_0,x_1,x_2,\ldots$ there exists some $i<j$ such that $x_i\preceq x_j$.\end{definition}
It is not difficult to see that a quasi-order is a WQO iff it contains no infinite strictly decreasing chains and no infinite sequences or pairwise incomparable elements. Therefore being a WQO is a strictly stronger property than being well-founded. For example, the quasi-order $(\NN, \; | \; )$ of natural numbers ordered by divisibility is well-founded, but not a WQO. The following is perhaps slightly less obvious:
\begin{lemma}\label{res-ramsey}A quasi-order $(X,\preceq)$ is a WQO iff any infinite sequence $x_0,x_1,x_2,\ldots$ contains an infinite increasing subsequence $x_{g(0)}\preceq x_{g(1)}\preceq x_{g(2)}\preceq\ldots$ (where $g(0)<g(1)<\ldots$).\end{lemma}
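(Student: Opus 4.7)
The plan is to prove the two directions separately. The backwards implication is trivial: given any infinite increasing subsequence $x_{g(0)} \preceq x_{g(1)} \preceq \ldots$, taking $i := g(0)$ and $j := g(1)$ already supplies a pair with $i < j$ and $x_i \preceq x_j$, so $(X,\preceq)$ is a WQO by definition.

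For the forward implication I will use the classical \emph{terminal element} argument, which is essentially the combinatorial heart of the infinite Ramsey theorem for pairs (hence the label \texttt{res-ramsey}). Fix an infinite sequence $x_0, x_1, x_2, \ldots$ in $X$ and call an index $i$ \emph{terminal} if $x_i \not\preceq x_j$ for every $j > i$. The key observation is that the set $T$ of terminal indices must be finite: otherwise listing $T$ in increasing order as $i_0 < i_1 < i_2 < \ldots$ would yield an infinite sequence $x_{i_0}, x_{i_1}, \ldots$ for which no pair $k < \ell$ satisfies $x_{i_k} \preceq x_{i_\ell}$ (by the definition of terminal applied at $i_k$), directly contradicting the WQO property.

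Having established $T$ finite, pick $N$ strictly larger than every element of $T$. Then for each $i \geq N$ there exists some $j > i$ with $x_i \preceq x_j$. Define $g : \NN \to \NN$ recursively by $g(0) := N$ and $g(n+1) := \min\{\, j > g(n) : x_{g(n)} \preceq x_j \,\}$; the minimum exists precisely because $g(n) \geq N$ is non-terminal. By construction $g(0) < g(1) < \ldots$ and $x_{g(n)} \preceq x_{g(n+1)}$ for all $n$, giving the desired infinite increasing subsequence.

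The main obstacle is conceptual rather than technical: one must resist the temptation to build the increasing subsequence directly by iterated choice from the start of the sequence, since a priori some initial $x_i$ might have no $\preceq$-successor. Isolating the set of terminal indices and ruling it out via WQO is precisely what unblocks this. I note in passing that a higher-powered alternative would be to $2$-colour the pairs $\{i,j\}$ by whether $x_i \preceq x_j$ and apply infinite Ramsey, but the terminal-element proof is more elementary and more informative about what is happening combinatorially.
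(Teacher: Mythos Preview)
Your proof is correct and follows essentially the same approach as the paper: both define the set of ``terminal'' indices $T=\{i:(\forall j>i)\neg(x_i\preceq x_j)\}$, argue that $T$ must be finite by the WQO assumption, and then build the increasing subsequence from some $N$ beyond $T$. Your version is slightly more explicit in constructing $g$ and in spelling out the easy direction, but the argument is the same.
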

\begin{proof}For the non-trivial direction, let $(X,\leq)$ be a WQO, and take some infinite sequence $x_0,x_1,\ldots$. Define $T\subseteq\NN$ by $T:\equiv\{i\in\NN\; | \; (\forall j>i)\neg(x_i\preceq x_j)\}$. Then $T$ must be finite, otherwise we would be able to construct a sequence contradicting the assumption that $X$ is a WQO. Therefore there is some $N\in\NN$ such that for all $i\geq N$ there exists some $j>i$ with $x_i\preceq x_j$, which allows us to construct our infinite increasing subsequence.\end{proof}
Given some mathematical property, such as being well quasi-ordered, we are often interested in identifying constructions which preserve that property. WQO theory is particularly rich in such results. A simple example is the following:
\begin{proposition}\label{res-dickson}If $(X,\preceq_X)$ and $(Y,\preceq_Y)$ are WQOs, then so is their cartesian product $(X\times Y,\preceq_{X\times Y})$ under the pointwise ordering.\end{proposition}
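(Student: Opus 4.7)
The plan is to prove this by applying Lemma \ref{res-ramsey} to one coordinate and then using the basic WQO property on the other. The naive approach of applying the WQO property to each coordinate separately fails because the indices witnessing comparability in $X$ need not coincide with those in $Y$; to synchronise the two coordinates, one needs a stronger form of the property in at least one component, which is exactly what Lemma \ref{res-ramsey} provides.

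Concretely, suppose $(x_0,y_0),(x_1,y_1),\ldots$ is an infinite sequence in $X \times Y$. First I would apply Lemma \ref{res-ramsey} to the projection $x_0,x_1,\ldots$ in $X$, obtaining some strictly increasing $g:\NN\to\NN$ such that
\[ x_{g(0)} \preceq_X x_{g(1)} \preceq_X x_{g(2)} \preceq_X \ldots. \]
Next I would consider the subsequence $y_{g(0)},y_{g(1)},\ldots$ in $Y$. Since $(Y,\preceq_Y)$ is a WQO, there exist $i<j$ with $y_{g(i)} \preceq_Y y_{g(j)}$. Setting $n := g(i)$ and $m := g(j)$, we have $n < m$ and, by construction, $x_n \preceq_X x_m$ and $y_n \preceq_Y y_m$, so $(x_n,y_n) \preceq_{X\times Y} (x_m,y_m)$ as required. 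Reflexivity and transitivity of $\preceq_{X\times Y}$ follow immediately from those of $\preceq_X$ and $\preceq_Y$, so $X \times Y$ is indeed a quasi-order.

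There isn't really a serious obstacle here once Lemma \ref{res-ramsey} is in hand; the only subtle point worth flagging is precisely the role of that lemma, since without it one would naturally try to iterate the plain WQO property and run into the indexing mismatch described above. It is also worth noting in passing that this argument is the classical (non-constructive) one: the function $g$ is extracted via a case distinction on the finiteness of the set $T$ appearing in the proof of Lemma \ref{res-ramsey}, which is exactly the kind of step whose constructive content will be the focus of later sections.
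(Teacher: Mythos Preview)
Your proof is correct and follows essentially the same approach as the paper: apply Lemma~\ref{res-ramsey} to the first coordinate to obtain an increasing subsequence indexed by $g$, then use the WQO property of $Y$ on the second coordinate along that subsequence to find $i<j$ with $y_{g(i)}\preceq_Y y_{g(j)}$, and conclude. Your additional remarks on why the naive approach fails and on the non-constructive nature of the argument are apt but not part of the paper's proof.
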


\begin{proof}Given an infinite sequence $\pair{x_0,y_0},\pair{x_1,y_1},\ldots$, consider the first component $x_0, x_1,\ldots$. Since $X$ is a WQO, by Lemma \ref{res-ramsey} there exists an infinite increasing sequence $x_{g(0)}\preceq_X x_{g(1)}\preceq_X \ldots$. Now consider the sequence $y_{g(0)},y_{g(1)},\ldots$. Since $Y$ is a WQO, there exists some $i<j$ with $y_{g(i)}\preceq_Y y_{g(j)}$. But by transitivity we also have $x_{g(i)}\preceq x_{g(j)}$, and therefore $\pair{x_{g(i)},y_{g(i)}}\preceq_{X\times Y}\pair{x_{g(j)},y_{g(j)}}$.\end{proof}
A far more subtle result, which forms the basis of this article, is the following theorem, widely known as \emph{Higman's lemma}:
\begin{theorem}[Higman's lemma \cite{Higman(1952.0)}]\label{res-higman}If $(X,\preceq)$ is a WQO, then so is $(X^\ast,\preceq_{\ast})$, the set of finite sequences over $X$ ordered under the embeddability relation, where $\seq{x_0,\ldots,x_{m-1}}\preceq_{\ast}\seq{y_0,\ldots,y_{n-1}}$ whenever there is a strictly increasing map $f$ with $x_i\preceq_{} y_{f(i)}$ for all $i<m$.\end{theorem}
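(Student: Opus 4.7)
The plan is to proceed by Nash-Williams' classical minimal bad sequence argument. Assume for contradiction that $(X^\ast,\preceq_\ast)$ is not a WQO, so there exists an infinite \emph{bad} sequence, i.e.\ a sequence $(s_i)_{i\in\NN}$ in $X^\ast$ with $s_i\not\preceq_\ast s_j$ whenever $i<j$. Invoking dependent choice together with a minimum principle at each stage (the step whose constructive content is proof-theoretically delicate, and which the rest of the paper is dedicated to analysing), I would construct a \emph{minimal} bad sequence $(s_i)$ in which, for each $n$, $s_n$ has the smallest possible length among all $x\in X^\ast$ such that $s_0,\ldots,s_{n-1},x$ still admits a continuation to an infinite bad sequence.

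No $s_i$ can be the empty sequence, as the empty sequence embeds into every word and this would violate badness. Hence each $s_i$ decomposes into a first letter $a_i\in X$ followed by a tail $t_i\in X^\ast$. Applying Lemma \ref{res-ramsey} to the head sequence $(a_i)$, which lives in the WQO $X$, I obtain an infinite increasing subsequence $a_{g(0)}\preceq a_{g(1)}\preceq\ldots$. The key step is then to consider the mixed sequence
\[ s_0,\,s_1,\,\ldots,\,s_{g(0)-1},\,t_{g(0)},\,t_{g(1)},\,t_{g(2)},\,\ldots \]
and verify that it too is bad: an embedding $t_{g(i)}\preceq_\ast t_{g(j)}$ for $g(0)\leq i<j$ would combine with $a_{g(i)}\preceq a_{g(j)}$ to give $s_{g(i)}\preceq_\ast s_{g(j)}$, contradicting badness of $(s_i)$; an embedding $s_i\preceq_\ast t_{g(j)}$ for $i<g(0)$ would compose with the trivial embedding $t_{g(j)}\preceq_\ast s_{g(j)}$ to yield $s_i\preceq_\ast s_{g(j)}$; and the initial segment is bad by assumption. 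But $t_{g(0)}$ is then a strictly shorter continuation of $s_0,\ldots,s_{g(0)-1}$ that admits a bad extension, contradicting the choice of $s_{g(0)}$ as having minimum length.

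The combinatorial verifications above are routine; the main obstacle --- and indeed the raison d'\^etre of the entire paper, as the introduction makes clear --- is the minimal bad sequence construction. Not only does it invoke dependent choice to iterate over $\NN$, but at each stage it tacitly uses a further choice/minimisation principle to select an element of least length among a potentially complicated set of admissible continuations. Providing a concrete computational interpretation of this step via G\"odel's functional interpretation is precisely the deep problem the author sets out to address; my sketch above leaves it as a classical black box, to be unpacked in the sections that follow.
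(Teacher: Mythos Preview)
Your proof is correct and follows the same Nash--Williams minimal-bad-sequence strategy as the paper. There are two cosmetic differences worth noting. First, you minimise over word \emph{length}, whereas the paper minimises with respect to the \emph{prefix} relation $\lhd$; second, you decompose each word as first-letter-plus-tail, whereas the paper decomposes as prefix-plus-last-letter (writing $v_n=\tilde v_n\ast\bar v_n$ with $\bar v_n\in X$). Both variants are standard and the combinatorial case analysis goes through identically. The paper's choice is not accidental, however: using the prefix order rather than length is what allows the minimal-bad-sequence step to be recast as a direct instance of the axiom $\ZL$ over the lexicographic extension of $\lhd$, which is the whole point of the subsequent formal development. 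Your length-based version would require a (trivial) reformulation before it fits that framework.
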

A short and extremely elegant proof of Higman's lemma was given by Nash-Williams', using the so-called minimal bad sequence construction, which is a central topic of our paper.
\begin{proof}[Proof of Higman's lemma \cite{NashWilliams(1963.0)}]Suppose for contradiction that $X$ is a WQO but that there exists an infinite sequence of words $u_0,u_1,\ldots$ such that $\neg(u_i\preceq_\ast u_{j})$ for all $i<j$. We call such a sequence a `bad sequence'. Now, using the axiom of dependent choice, pick a \emph{minimal} bad sequence $v_0,v_1,\ldots$ as follows:
\begin{quote}Given that we have already constructed $\seq{v_0,\ldots,v_{k-1}}$, define $v_k$ to be such that $\seq{v_0,\ldots,v_{k-1},v_k}$ extends to some infinite bad sequence, but $\seq{v_0,\ldots,v_{k-1},a}$ does not for any $a\lhd v_k$, by which mean any prefix $a$ of $v_k$. \end{quote}
Note that such a $v_k$ exists by the minimum principle over the wellfounded prefix relation $\lhd$, together with the fact that $\seq{v_0,\ldots,v_{k-1}}$ must extend to \emph{some} bad sequence: For $k=0$ this follows from our assumption that least one bad sequence exists, while for $k>0$ it is true by construction.

Now, the crucial point is that this minimal sequence must itself be bad: If instead there were some $i<j$ with $v_i\preceq_\ast v_j$, then $\seq{v_0,\ldots,v_{j}}$ could not extend to a bad sequence, contradicting our construction. Therefore in particular each $v_n$ must be non-empty, otherwise we would trivially have $v_n=\seq{}\preceq_\ast v_{n+1}$. This means that each $v_n$ must be a concatenation of the form $\tilde v_{n}\ast \bar v_n$ where $\tilde v_n\in X^\ast$ and $\bar v_n\in X$. By Lemma \ref{res-ramsey} the sequence $\bar v_0,\bar v_1,\ldots$ contains some increasing subsequence $\bar v_{g0}\preceq \bar v_{g1}\preceq\ldots$, so let's now consider the sequence
\begin{equation*}w:=v_0,\ldots,v_{g0-1},\tilde v_{g0},\tilde v_{g0+1},\tilde v_{g0+2},\ldots\end{equation*}
Since $\tilde v_{g0}\lhd v_{g0}$, by minimality of $v$ the sequence $w$ must be good, which means that $w_i\preceq_\ast w_j$ for some $i<j$. There are three possibilities: First $j<g0$ and so $v_i=w_i\preceq_\ast w_j= v_j$, second $i<g0$ and $j=gj'$ and so $v_i\preceq_\ast\tilde v_{gj'}$ which implies that $v_i\preceq_\ast v_{gj'}$ since $\tilde v_{gj'}\lhd v_{gj'}$, and finally $i,j=gi',gj'$ and so $\tilde v_{gi'}\preceq_\ast \tilde v_{gj'}$ which implies that $v_{gi'}\preceq_\ast v_{gj'}$ since $\bar v_{gi'}\preceq \bar v_{gj'}$. In all cases we have $v_i\preceq_\ast v_j$, contradicting the fact that $v$ is bad. Hence our original assumption was false, and we can conclude that there are no bad sequence, or equivalently that $X^\ast$ is a WQO.\end{proof}

As an immediate consequence of Higman's lemma, we see that our main problem can, in theory, be solved:
\begin{corollary}\label{res-boolean}Given an infinite sequence $u$ of words over a two letter alphabet $\{0,1\}$, there exists a pair of indices $i<j$ such that $u_i$ is embedded in $u_j$.\end{corollary}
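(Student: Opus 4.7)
The plan is to deduce this immediately from Higman's lemma (Theorem \ref{res-higman}), so the only real work is to verify that the two-letter alphabet $\{0,1\}$ is itself a WQO under some suitable quasi-order. The obvious choice is equality: since reflexivity and transitivity are trivial, it suffices to check the infinite-sequence condition.

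For this, I would note that $\{0,1\}$ is finite, so any infinite sequence $a_0, a_1, a_2, \ldots$ of elements of $\{0,1\}$ must, by the pigeonhole principle, take some value at infinitely many indices; in particular there exist $i < j$ with $a_i = a_j$, and hence $a_i \preceq a_j$ under the equality ordering. So $(\{0,1\}, =)$ is a WQO.

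Applying Theorem \ref{res-higman} to $X = \{0,1\}$ then gives that $(\{0,1\}^\ast, \preceq_\ast)$ is a WQO. Unpacking the embeddability ordering: $\seq{a_0,\ldots,a_{m-1}} \preceq_\ast \seq{b_0,\ldots,b_{n-1}}$ iff there is a strictly increasing $f$ with $a_i = b_{f(i)}$ for all $i < m$, which is precisely the usual notion of one binary word being embedded in another. Applying the definition of WQO to the sequence $u_0, u_1, \ldots$ then directly yields indices $i < j$ with $u_i \preceq_\ast u_j$, as required.

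I do not foresee any genuine obstacle here; the corollary is essentially a packaging of Higman's lemma specialised to a trivial base case. The only step requiring any thought is the choice of quasi-order on $\{0,1\}$, and equality is the cleanest option since it produces exactly the intended notion of embedding on $\{0,1\}^\ast$. (One could equivalently take the total order $0 \preceq 1$, which would give the same conclusion via the same pigeonhole argument, though then $\preceq_\ast$ would be a strictly coarser relation than word embedding and the statement of the corollary would follow \emph{a fortiori}.)
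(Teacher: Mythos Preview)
Your proposal is correct and follows essentially the same approach as the paper: the paper's proof simply observes that $(\{0,1\},=)$ is trivially a WQO and then applies Higman's lemma, which is exactly what you do (with a bit more detail on the pigeonhole step and the unpacking of $\preceq_\ast$).
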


\begin{proof}The set $(\{0,1\},=)$ trivially a WQO, therefore by Higman's lemma so is $(\{0,1\}^\ast,=_\ast)$.\end{proof}

\subsection{The minimal bad sequence construction and Zorn's lemma}
\label{sec-wqo-zorn}

The existence of a minimal bad sequence in Nash-Williams' proof of Higman's lemma can be viewed in a much broader context as a particular instance of Zorn's lemma, or equivalently, as an inductive principle over chain-complete partial orders. This was first observed by Raoult \cite{Raoult(1988.0)}, and since it informs our approach to program extraction, we will explain in a little more detail what is meant by this. 

Suppose that $(Y,\sqsupseteq)$ is a chain-complete partial order, where for each non-empty chain $\gamma$ in $Y$ we fix some lower bound $\bigwedge\gamma$, which is usually taken to be the greatest lower bound if it exists (note that the fact that we talk about lower rather than upper bounds is purely cosmetic, as it sounds slightly more natural when generalising the notion of a \emph{minimal} bad sequence). The following result is essentially just the contrapositive of the principle of open induction discussed in \cite{Raoult(1988.0)}:
\begin{proposition}\label{res-open}Let $B$ be a predicate on $Y$ which satisfies the property that for any \emph{non-empty} chain $\gamma$,
\begin{equation}\label{eqn-open}(\forall x\in\gamma)B(x)\to B\left(\bigwedge\gamma\right).\end{equation}
Then whenever $B(x)$ holds for some $x\in Y$, there is some minimal $y$ such that $B(y)$ holds, but $y\sqsupset z\to \neg B(z)$.\end{proposition}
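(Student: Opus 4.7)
The plan is to recognise this as essentially Zorn's lemma applied to the subposet of elements satisfying $B$. I would begin by introducing $C := \{y \in Y : B(y)\}$, which is non-empty since $x \in C$ by hypothesis. The key observation is that $C$ is closed under the meet operation $\bigwedge$ on non-empty chains: given any non-empty chain $\gamma \subseteq C$, $\gamma$ is in particular a chain in $Y$, so $\bigwedge \gamma$ is defined, and by the closure hypothesis (\ref{eqn-open}) we have $B(\bigwedge \gamma)$, whence $\bigwedge \gamma \in C$.

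Hence $C$, viewed as a subposet of $Y$ under $\sqsupseteq$, is non-empty and has the property that every non-empty chain has a lower bound lying in $C$. Applying the dual form of Zorn's lemma -- which guarantees a minimal element in any non-empty poset whose non-empty chains all have lower bounds -- yields some $y \in C$ with no $z \in C$ satisfying $y \sqsupset z$. Unpacked, this is exactly the desired conclusion: $B(y)$ holds and $y \sqsupset z \to \neg B(z)$.

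An alternative, and perhaps more operationally suggestive, presentation is via transfinite recursion, which seems closer in spirit to the rest of the paper. Assuming no such minimal $y$ exists, the axiom of choice supplies a function $f : C \to C$ with $w \sqsupset f(w)$ for all $w \in C$. Setting $y_0 := x$, $y_{\alpha+1} := f(y_\alpha)$, and $y_\lambda := \bigwedge \{y_\alpha : \alpha < \lambda\}$ at limit stages -- with (\ref{eqn-open}) ensuring $y_\lambda \in C$ and hence that the recursion keeps going -- one obtains a strictly $\sqsupseteq$-descending transfinite sequence, giving an injection from the class of ordinals into the set $Y$, a contradiction.

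The main obstacle here is not really technical -- the argument reduces to a standard application of Zorn's lemma, and the verification that $C$ inherits chain-completeness from $Y$ is immediate. What I expect to be genuinely difficult, and presumably the reason the author is flagging this reformulation, is the \emph{computational} content of the principle: the appeal to Zorn (equivalently to dependent choice plus transfinite recursion) is precisely where the non-constructive core of Nash-Williams' minimal bad sequence construction is concentrated, and it is this that the functional interpretation will subsequently need to analyse.
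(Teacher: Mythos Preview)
Your proposal is correct and follows essentially the same route as the paper: define the set $S=\{y\in Y:B(y)\}$, observe that it is non-empty and that every chain in $S$ has a lower bound in $S$ (by hypothesis~(\ref{eqn-open}) for non-empty chains, and by the witness $x$ for the empty chain), and then invoke Zorn's lemma to obtain a minimal element. Your additional transfinite-recursion sketch is a pleasant bonus but is not needed for the proposition itself.
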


\begin{proof}Define $S:\equiv\{x\in Y\; | \; B(x)\}$. Then whenever $B(x)$ holds for some $x$, the set $S$ is chain complete: For the empty chain we just take $x$ as a lower bound, while any non-empty chain $\gamma$ in $S$ we have that $\bigwedge\gamma\in S$ by (\ref{eqn-open}). Therefore by Zorn's lemma $S$ has some minimal element $y$.  \end{proof}

Now, consider some set $X$ which comes equipped with given a strict partial order $\lhd$ on $X$ which is wellfounded. Define the lexicographic extension $\llhd$ of $\lhd$ by
\begin{equation*}u\llhd v\mbox{ \ \ iff \ \ }(\exists n)(\initSeg{u}{n}=\initSeg{v}{n}\wedge u_n\lhd v_n)\end{equation*}
where $\initSeg{u}{n}:=\seq{u_0,\ldots,u_{n-1}}$ denotes the initial segment of $u$ of length $n$. It is easy to show that $\llhd$ is also (strict) a partial order, and while $\llhd$ is not wellfounded - for example, setting $X:={0,1}$ and defining $0\lhd 1$ we would have
\begin{equation*}1,1,1,\ldots \rrhd 0,1,1,\ldots \rrhd 0,0,1,\ldots\rrhd\ldots\end{equation*}
- it is \emph{chain-complete}. In fact, given a chain $\gamma$ in $(X^\NN,\rrhd)$, we can construct its greatest lower bound by defining $v_0\in X$ to be the minimum with respect to $\rhd$ of the first components of the elements of $\gamma$, then $v_1\in X$ to be the minimum of the second components of all elements $x\in\gamma$ with $x_0=v_0$, then $v_2$ to be the minimum of the third components of all elements $x\in\gamma$ with $x_0,x_1=v_0,v_1$ and so on, and it is not difficult to show that $\bigwedge\gamma:=v$ is a greatest lower bound of $\gamma$.

Moreover, this greatest lower bound $v$ has the property that for any $n\in\NN$, there is some $x_n\in\gamma$ which agrees with $v$ on the first $n$ elements i.e. $\initSeg{v}{n}=\initSeg{x_n}{n}$. This motivates the following definition:

%
%

%
\begin{definition}\label{defn-pw}A formula $B(u)$ on infinite sequences $u\in X^\NN$ is piecewise definable, or just piecewise, if it can be expressed in the form $(\forall n)P(\initSeg{u}{n})$ for some formula $P(s)$ on finite sequences $s\in X^\ast$.\end{definition}
\begin{theorem}\label{res-ZL-math}Let $B(u)\equiv(\forall n)P(\initSeg{u}{n})$ be a piecewise formula, and suppose that $B(u)$ holds for some $u$. Then there exists some minimal `bad' sequence $v$ such that $B(v)$ holds, but $\neg B(w)$ for any $w\llhd v$.\end{theorem}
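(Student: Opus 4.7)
The plan is to apply Proposition \ref{res-open} directly, instantiated with the chain-complete partial order $(X^\NN, \rrhd)$ described in the paragraphs immediately preceding the theorem and with $B$ itself as the target predicate. Since we are told $B(u)$ holds for some $u \in X^\NN$, the only nontrivial thing to verify is the chain-condition (\ref{eqn-open}): for every non-empty chain $\gamma$, if $B$ holds on every element of $\gamma$, then $B(\bigwedge\gamma)$ holds as well. Once this is established, the conclusion of Proposition \ref{res-open} unfolds to exactly the statement of Theorem \ref{res-ZL-math}, since $v \rrhd z$ is the same as $z \llhd v$.

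To verify the chain condition I would exploit the piecewise structure of $B$ in tandem with the construction of $\bigwedge\gamma$ sketched just before Definition \ref{defn-pw}. Setting $v := \bigwedge\gamma$, the key property noted there is that for each $n \in \NN$ there exists some $x^{(n)} \in \gamma$ with $\initSeg{v}{n} = \initSeg{x^{(n)}}{n}$. Since $B(x^{(n)})$ holds, so in particular does $P(\initSeg{x^{(n)}}{n})$, and the initial-segment agreement then immediately yields $P(\initSeg{v}{n})$. As $n$ was arbitrary, we obtain $B(v) \equiv (\forall n) P(\initSeg{v}{n})$, as required.

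The main obstacle --- such as it is --- is conceptual rather than technical: the chain condition would fail for arbitrary predicates on $X^\NN$, so the piecewise assumption is really doing all the work, and the compatibility between the piecewise structure of $B$ and the initial-segment agreement property of lexicographic greatest lower bounds is the heart of the argument. This is what explains why this refinement of Zorn's lemma precisely captures the combinatorics underlying Nash-Williams' minimal bad sequence construction, where the bad-sequence predicate is itself manifestly piecewise in exactly this sense.
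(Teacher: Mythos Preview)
Your proposal is correct and follows essentially the same approach as the paper: both verify the chain condition (\ref{eqn-open}) for the piecewise predicate $B$ by using the initial-segment agreement property of $\bigwedge\gamma$ together with $B(x^{(n)})$ to deduce $P(\initSeg{v}{n})$ for each $n$, and then invoke Proposition \ref{res-open}. The paper's proof is slightly terser but the argument is identical.
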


\begin{proof}Take any non-empty chain $\gamma$ such that $(\forall x\in \gamma)B(x)$, and let $v:=\bigwedge \gamma$. We want to show that $B(v)$ holds i.e. $P(\initSeg{v}{n})$ holds for all $n\in\NN$. But as observed above, for any $n$ there exists some $x_n\in\gamma$ with $\initSeg{v}{n}=\initSeg{x_n}{n}$, and $P(\initSeg{x_n}{n})$ follows from $B(x_n)$. Therefore the existence of a minimal bad sequence $v$ follows directly from Proposition \ref{res-open}. \end{proof}

Theorem \ref{res-ZL-math} is nothing more than a generalisation of the minimal-bad-sequence construction in Nash-Williams' proof of Higman's lemma: The predicate `$u$ is bad' can be expressed as $B(u):\equiv (\forall n)(\forall i<j<n)\neg (u_i\preceq_\ast u_j)$ which is clearly a piecewise formula, and so the existence of a minimal bad sequence follows as a special case of the instance of Zorn's lemma given in Proposition \ref{res-open}, where $Y:=(X^\ast)^\NN$ and $\sqsupset$ is taken to be $\rrhd$ over the lexicographic extension of the prefix order.

\subsection{Zorn's lemma as an axiom}
\label{sec-wqo-axiom}

The reason for the short digression above is to encourage the reader to think of the minimal bad sequence construction, not as a derived result which follows from dependent choice, but as an axiomatic minimum principle over the chain-complete partial order $((X^\ast)^\NN,\rrhd)$ which can be considered a weak form of Zorn's lemma, namely
\begin{equation}\label{eqn-abstract-zorn}(\exists u) B(u)\to (\exists v)(B(v)\wedge(\forall w\llhd v)\neg B(w)),\end{equation}
where $B(u)$ ranges over \emph{piecewise} formulas. Note that in this case, the premise of Zorn's lemma, namely chain-completeness of $S=\{u\in (X^\ast)^\NN\; | \; B(u)\}$, is encoded by both the premise $(\exists u) B(u)$ and the assumption that $B$ is piecewise. 

While this slight shift of emphasis from dependent choice to Zorn's lemma might not seem significant from an ordinary mathematical perspective, it completely alters the way in which we give a computational interpretation to Nash-Williams' proof of Higman's lemma, as this depends entirely on the manner in which we choose to formalise that proof. We will discuss proof interpretations and program extraction in much more detail in Section \ref{sec-goedel} below, but since the difference between dependent choice and above formulation of Zorn's lemma motivates our formal proof in the next section, it is important to at least roughly explain why this distinction matters to us here.

The extraction of programs from proofs typically works by assigning basic programs to the axioms and rules of some mathematical theory, and then constructing general programs recursively over the structure of formal proofs. Thus the programs which interpret the axioms of our theory form our basic building blocks, and the overall size and complexity of an extracted program in terms of these blocks reflects the size and complexity of the formal proof from which it was obtained.

In the early days of proof theory, when proofs interpretations were primarily used to obtain relative consistency proofs, `extracted programs' were nothing more than hypothetical objects which gave a computational interpretation to falsity, whose existence within some formal calculus was necessary but whose structure as programs was uninteresting and irrelevant. As such, it was sensible to work in a minimal axiomatic theory which was easy to reason about on a meta-level, but not necessarily convenient for extracting programs in practice. This was the approach taken by Spector \cite{Spector(1962.0)}, who extended G\"{o}del's consistency proof to full mathematical analysis by that showing countable dependent choice could be interpreted by the scheme of \emph{bar recursion} in all finite types, a form of recursion which, while elegant, can be rather abstruse when it comes to understanding its operational semantics as part of a real program.

For us, on the other hand, a proof interpretation is a \emph{tool} for extracting an \emph{actual} program from the proof of Higman's lemma whose algorithmic behaviour can be understood to some extent, as opposed to some obscure syntactical object which essentially acts as a black box. As a result, we want to work in a axiomatic system in which Nash-Williams' minimal bad sequence construction can be cleanly and concisely formalised.  So it is natural to ask whether, to this end, we can give a more \emph{direct} proof of Nash-Williams' construction which circumvents the use of bar recursion, and leads to a more intuitive extracted program. 

Our idea will be the following: Instead of taking dependent choice as a basic axiom and using this to prove the existence of a minimal-bad-sequence, we will instead take (\ref{eqn-abstract-zorn}) as a basic axiom, from which the existence of minimal-bad-sequences follows trivially. As a result, though, we will no longer be able to rely on Spector's computational interpretation of dependent choice, and will have to instead construct a new realizer for (\ref{eqn-abstract-zorn}).

\begin{figure}[h]
\begin{center}
\[\xymatrix{\mbox{dependent choice}\ar[dd]\ar@{.>}_{\mbox{\emph{(indirect)}}}[ddrr] & & \mbox{Zorn's lemma}\ar@{->}^{\mbox{\emph{(direct)}}}[dd] \\ \\ \mbox{bar recursion}\ar[rr] & & \fbox{extracted program}} \]
\end{center}
\end{figure}

It is now perhaps becoming clear to the reader why the three deeper questions outlined in the introduction emerge naturally from Higman's lemma! The construction of our direct realizer for the principle (\ref{eqn-abstract-zorn}) carried out in Sections \ref{sec-ZLa}-\ref{sec-ZLb} offers a partial solution to Question \ref{item-qi} for the particular instance of Zorn's lemma used here. Section \ref{sec-OR} will focus specifically on the special variant of recursion over $\rrhd$ that will be required in order to do all this, and will therefore in turn address Question \ref{item-qii}. Question \ref{item-qiii} is something that will be on our minds throughout the paper, and in particular influences our description of the realizing term in terms of \emph{learning procedures}.

Before we go on, it is important to observe that the idea of replacing dependent choice with some variant of Zorn's lemma has already been considered by Berger in the setting of modified realizability, in which a variant of Raoult's principle of open induction was given a direct realizability interpretation by a form of open recursion \cite{Berger(2004.0)}. Here we will give an analogous interpretation for the \emph{functional interpretation} of a principle classically equivalent open induction, and our work here differs considerably from the realizability setting in a number of crucial respects, all of which we make clear later.

\section{A formal proof of Higman's lemma}
\label{sec-formal}

As I highlighted above, in order to apply a proof interpretation to a proof, we first need to have some kind of formal representation of this proof in mind. The route from `textbook' to formal proof is no mere preprocessing step - the structure and hence usefulness of our extracted program is entirely dependent on the way in which we make precise the logical steps encoded by our textbook proof. The power of applied proof theory is due to the fact that the careful analysis of logical subtleties in formal proofs can reveal quantitative information that is not apparent from an ordinary mathematical perspective, hence the frequent characterisation of this information as being `hidden' in the proof. In our case, as emphasised already, the fact that we will formalise Nash-Williams' proof of Higman's lemma using an axiomatic form of Zorn's lemma is absolutely crucial to our approach.

When it comes to the application of proof interpretations, one encounters two rather distinct styles in the literature. In proof mining as conceived by Kohlenbach \cite{Kohlenbach(2008.0)}, the formal analysis of a proof is typically done `by hand'. Here, a proof interpretation is simply a means to an end (typically a numerical bound on e.g. a rate of convergence), and as such plays the role of a tool to be wielded by a mathematician. In contrast, for automated program extraction in proof assistants such as \textsc{Minlog} \cite{Minlog}, a proof interpretation forms a high level description of a procedure which has to be implemented, and programs are then extracted synthetically at the push of a button. In this case it goes without saying that the user must provide as input a full machine checkable formal proof, written within the confines of some predetermined logical system.

In this article, we take a somewhat mixed approach. On the one hand, this is not a paper on formalised mathematics (and I am certainly not a specialist in this area!): We are interested in a range of rather broad theoretical issues which we intend to present through focusing on the key features of Nash-Williams' proof, and in this sense our construction of a realizing term is based on the pen-and-paper style familiar in proof mining. On the other hand, the novelties of our approach are useful partially because they can \emph{in theory} be automated within a proof assistant, and so throughout we present our construction in a semi-formal manner in the hope that the reader at least believes that the main ideas could be implemented at some point.

\subsection{The logical system}
\label{sec-formal-logical}

The main logical theory in which we work will be the theory $\PAomega$ of Peano arithmetic in all finite types. Here, we define the finite types to include the base types $\BB$ and $\NN$ for booleans and natural numbers respectively, and allow the construction of product $X\times Y$, finite sequence $X^\ast$ and function types $X\to Y$. The theory $\PAomega$ is just the usual theory of Peano arithmetic, but with variables and quantifiers for objects of any type. We write $x:X$ or $x^X$ to denote that $x$ has type $X$. Equality symbols $=_\BB$ and $=_\NN$ for base types are taken as primitive, whereas equality for other types is defined inductively terms of these, so for example $s=_{X\to Y} t:\equiv (\forall x)(sx=_Y tx)$. There are various ways of treating extensionality: for reasons which we will not go into here, the functional interpretation does not interpret the axiom of extensionality - only a weak rule form - and so if the reader prefers they can take $\PAomega$ to be the weakly-extensional variant $\WEPAomega$ defined in e.g. \cite{Kohlenbach(2008.0),Troelstra(1973.0)}, although it should be stressed that extensionality is only an issue for the interpreted theory, and when it comes to \emph{verifying} our extracted programs in later sections we freely make use of full extensionality. In any case, the exact details of the logical system are not important in this paper, as our extraction of a program is not fully formal. 

What \emph{is} important is the way in which we extend our base theory in order to deal with the minimal bad sequence argument. First note that when reasoning in higher types it is essential to be able to add to our base theory the very weak axiom of choice for quantifier-free formulas:
\begin{equation*}\QFAC \; \colon \; (\forall x^X)(\exists y^Y) A_0(x,y)\to (\exists f^{X\to Y})(\forall x) A_0(x,f(x)).\end{equation*}
which as we will see is completely harmless from a computational point of view. In contrast, the crucial additional axiom we choose in order to formalise Nash-Williams' argument is the following syntactic variant of Theorem \ref{res-ZL-math} already stated as (\ref{eqn-abstract-zorn}), which we interpret as an axiom schema labelled $\ZL$,
\begin{equation*}\ZL \; \colon \; (\exists u^{\NN\to X}) B(u)\to (\exists v)(B(v)\wedge (\forall w\llhd v)\neg B(w))\end{equation*}
where $B(u)$ is understood to range over all \emph{piecewise} formulas of the form $(\forall n) P(\initSeg{u}{n})$, and $\lhd$ is some primitive recursive relation on $X$, transfinite induction over which is provable in $\PAomega$. Of course, technically we should include this additional wellfounded assumption as a premise so that $\ZL$ becomes a proper axiom schema, but we will omit it for simplicity and if the reader prefers they can just imagine that $\ZL$ is defined relative to some arbitrary but fixed $(X,\rhd)$. Both here and in Chapter \ref{sec-higman}, $X$ will actually be of the form $X^\ast$ and $\rhd$ will be nothing more than the prefix relation on finite sequences which is trivially wellfounded.  

Note that the contrapositive of $\ZL$ can be identified with open induction over the lexicographic ordering as treated in \cite{Berger(2004.0)}:
\begin{equation*}\OI \; \colon \; (\forall v)((\forall w\llhd v) U(w)\to U(v))\to (\forall u)U(u)\end{equation*}
where now $U(u)$ must be an open formula of the form $(\exists n) P(\initSeg{u}{n})$ (and so in our terminology, being piecewise is the negation of being open). In the realizability setting of \cite{Berger(2004.0)}, there is a genuine difference between $\ZL$ and $\OI$: the latter is an intuitionistic principle which can be given a direct computational interpretation via open induction, whereas the former is a non-constructive principle which cannot be realized without the use of e.g. the $A$-translation. On the other hand, for the functional interpretation combined with the negative translation, both are $\ZL$ and $\OI$ are interpreted by exactly the same term (informally, this is due to the fact that the functional interpretation of implication is much more intricate than that of realizability), so they are essentially interchangeable here. We choose $\ZL$ as primitive, because in our opinion the realizing term is a little more intuitive when viewed as an approximation to a minimal element $v$. In any case we will discuss these nuances later. For now, we proceed straight to the formal proof. 

\subsection{The formal proof}
\label{sec-formal-proof}

Suppose that $X$ is some arbitrary finite type which comes equipped with some quasi-order $\preceq$, which we take to mean some primitive recursive function $t:X\times X\to\BB$ for which reflexivity and transitivity are provable in $\PAomega$. We now introduce two predicates which represent the two equivalent definitions of a WQO which we required in Section \ref{sec-wqo}:
\begin{equation*}\begin{aligned}\WQO(\preceq)&:\equiv (\forall x^{\NN\to X})(\exists i<j)(x_i\preceq x_j)\\
		\sWQO(\preceq)&:\equiv (\forall x^{\NN\to X})(\exists g^{\NN\to\NN})(\forall i<j)(g(i)<g(j)\wedge x_{g(i)}\preceq x_{g(j)}).\end{aligned}\end{equation*}
Now, given $\preceq$ we can formally define the embeddability relation $\preceq_\ast$ as a primitive recursive function in $\preceq$, as in order to check that $a\preceq_\ast b$ we simply need to carry out a bounded search over all increasing functions $\{0,\ldots,|a|-1\}\to \{0,\ldots,|b|-1\}$, where $|a|$ denotes the length of $a$. Note that reflexivity and transitivity of $\preceq_\ast$ is easily provable from that of $\preceq$ in $\PAomega$. The main result of this section is the following:
\begin{theorem}\label{res-higman-formal}For some fixed quasi-order $\preceq$ on $X$ we have $\PAomega+\QFAC+\ZL\vdash\sWQO(\preceq)\to\WQO(\preceq_\ast)$.\end{theorem}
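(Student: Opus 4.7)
The plan is to faithfully transcribe Nash-Williams' argument from the proof of Theorem \ref{res-higman}, with the key difference that the existence of a minimal bad sequence is now supplied in a single step by the axiom $\ZL$, instantiated with the underlying set $X^\ast$ and $\lhd$ the (primitive recursive, wellfounded) proper prefix relation on finite sequences.

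I would begin by assuming $\sWQO(\preceq)$ and, for contradiction, $\neg \WQO(\preceq_\ast)$. Classical logic applied to this negation yields a bad sequence $u$ satisfying $(\forall i<j)\neg(u_i \preceq_\ast u_j)$. Defining the piecewise formula
\[B(u) \,:\equiv\, (\forall n)\, P(\initSeg{u}{n}), \qquad P(s) \,:\equiv\, (\forall i<j<|s|)\, \neg(s_i \preceq_\ast s_j),\]
we have $B(u)$, so $\ZL$ produces some $v$ with $B(v)$ and $(\forall w \llhd v)\,\neg B(w)$. Each $v_n$ must be nonempty, for otherwise $v_n = \seq{} \preceq_\ast v_{n+1}$ would contradict $B(v)$, and hence primitive recursion decomposes $v_n = \tilde v_n \ast \seq{\bar v_n}$ uniformly in $n$.

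Applying $\sWQO(\preceq)$ to $\bar v$ yields a strictly increasing $g$ with $\bar v_{g(i)} \preceq \bar v_{g(j)}$ for all $i<j$. Following Nash-Williams, set
\[w \,:=\, v_0,\, \ldots,\, v_{g(0)-1},\, \tilde v_{g(0)},\, \tilde v_{g(1)},\, \tilde v_{g(2)},\, \ldots,\]
that is, $w_i = v_i$ for $i<g(0)$ and $w_{g(0)+k} = \tilde v_{g(k)}$ for $k\ge 0$. Since $\tilde v_{g(0)} \lhd v_{g(0)}$ and $w$ agrees with $v$ on positions below $g(0)$, we have $w\llhd v$, so minimality delivers indices $i<j$ with $w_i \preceq_\ast w_j$. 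A three-way case split on the positions of $i$ and $j$ relative to $g(0)$ produces a pair witnessing goodness of $v$ itself: when both indices lie below $g(0)$ we read off $v_i \preceq_\ast v_j$; when only $i$ is below $g(0)$ we extend $\tilde v_{g(j')}$ by $\seq{\bar v_{g(j')}}$ to obtain $v_i \preceq_\ast v_{g(j')}$; and when both indices are at least $g(0)$, say $i=g(0)+i'$ and $j=g(0)+j'$, combining $\tilde v_{g(i')} \preceq_\ast \tilde v_{g(j')}$ with $\bar v_{g(i')} \preceq \bar v_{g(j')}$ yields $v_{g(i')} \preceq_\ast v_{g(j')}$. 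Each case contradicts $B(v)$, completing the proof.

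The conceptual work has been absorbed into the choice of $\ZL$ as a primitive axiom, so the main technical hurdles here are just the bookkeeping needed to verify $w \llhd v$ formally and the care required in enumerating cases of the final contradiction. The role of $\QFAC$ is peripheral in the mathematical argument but is retained so that routine Skolemisations (for instance in uniform applications of $\sWQO$, or in extracting the decomposition maps $n \mapsto \tilde v_n, \bar v_n$ from the existence of splittings) cause no difficulty when the argument is rendered fully formally in $\PAomega$.
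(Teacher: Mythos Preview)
Your proposal is correct and follows essentially the same route as the paper: instantiate $\ZL$ with the prefix relation on $X^\ast$ and the piecewise predicate $P(s):\equiv(\forall i<j<|s|)\neg(s_i\preceq_\ast s_j)$, apply $\sWQO(\preceq)$ to the tail sequence $\bar v$, form the spliced sequence $w$, and derive a contradiction via the three-way case split. The only organisational difference is that the paper factors the case analysis out into a separate numerically explicit lemma (Lemma~\ref{res-higman-fiddly}) which tracks an upper bound $g(k)+2$ on the witnessing indices; this is done not for the proof of the theorem itself but so that the bound can be reused when extracting a program in Section~\ref{sec-higman}.
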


The basic strategy of Nash-Williams' proof is first to construct a hypothetical minimal bad sequence, then to deal with a number of simple but quite fiddly cases in order to derive a contradiction. It will be greatly helpful to us in later sections if we separate these two parts here, and prove the following numerically explicit form of the latter:
\begin{lemma}\label{res-higman-fiddly}Given a sequence $v:(X^\ast)^\NN$, define two sequences $\tilde v:(X^\ast)^\NN$ and $\bar{v}:X^\NN$ from $v$ as follows:
\begin{equation*}\tilde{v}_n,\bar{v}_n:=\begin{cases}\seq{},0_X & \mbox{if $v_n=\seq{}$}\\
\seq{x_1,\ldots,x_{k-1}},x_k & \mbox{if $v_n=\seq{x_1,\ldots,x_k}$}\end{cases}\end{equation*}
where $0_X$ denotes some canonical element of type $X$. Given in addition some function $g:\NN\to\NN$, define the sequence $w:(X^\ast)^\NN$ by
\begin{equation*}w_n:=\begin{cases}v_n & \mbox{if $n<g(0)$}\\ \tilde{v}_{g(i)} & \mbox{if $n=g(0)+i$}.\end{cases}\end{equation*} 
Suppose that there exists some $k:\NN$ such that
\begin{equation}\label{eqn-higman-min}w_{g(0)}\lhd v_{g(0)}\to (\exists i<j<k)(w_i\preceq_\ast w_j)\end{equation}
and that $g$ satisfies
\begin{equation}\label{eqn-higman-mon}(\forall i<j\leq k)(g(i)<g(j)\wedge \bar v_{g(i)}\preceq \bar v_{g(j)}).\end{equation}
Then there exists a pair of indices $i<j<g(k)+2$ such that $v_i\preceq_\ast v_j$.

\end{lemma}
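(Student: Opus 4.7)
The plan is to mirror the three-case argument at the end of Nash-Williams' proof of Higman's lemma, translated into an explicit finitary statement with indices bounded by $g(k)+2$.

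Step one is to dispose of the degenerate case $v_{g(0)} = \seq{}$: then $v_{g(0)} \preceq_\ast v_{g(0)+1}$ holds trivially, and monotonicity of $g$ (which gives $g(0) \leq g(k)$ whether $k=0$ or $k\geq 1$) ensures the pair $(g(0), g(0)+1)$ fits inside the bound $g(k)+2$. So we may assume $v_{g(0)} \neq \seq{}$, which means $w_{g(0)} = \tilde v_{g(0)} \lhd v_{g(0)}$, and hypothesis (\ref{eqn-higman-min}) yields indices $i<j<k$ with $w_i \preceq_\ast w_j$ (in particular forcing $k\geq 2$).

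From here I would carry out a case analysis on where $i$ and $j$ sit relative to $g(0)$. If $j < g(0)$ then both $w_i = v_i$ and $w_j = v_j$, and since $j < g(0) \leq g(k)$ we are done immediately. If $i < g(0) \leq j$, write $j = g(0)+j'$ with $j' < k$; then $w_j = \tilde v_{g(j')}$ is a prefix of $v_{g(j')}$, so $v_i \preceq_\ast v_{g(j')}$, and $(i, g(j'))$ witnesses the conclusion since $g(j') < g(k)$ by (\ref{eqn-higman-mon}) and $i < g(0) \leq g(j')$. Finally, if $g(0) \leq i < j$, write $i=g(0)+i'$, $j=g(0)+j'$ with $i'<j'<k$, so that $\tilde v_{g(i')} \preceq_\ast \tilde v_{g(j')}$; combined with $\bar v_{g(i')} \preceq \bar v_{g(j')}$ from (\ref{eqn-higman-mon}), appending the matching final letter gives $v_{g(i')} \preceq_\ast v_{g(j')}$ with both indices strictly below $g(k)$. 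In the degenerate sub-sub-case where $v_{g(j')} = \seq{}$ (which would force $v_{g(i')}$ to have length at most one, blocking the final-letter extension), one falls back on the trivial embedding $v_{g(j')} \preceq_\ast v_{g(j')+1}$, for which $g(j')+1 \leq g(k) < g(k)+2$.

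The overall argument is not deep, and the main obstacle is purely bookkeeping: each subcase demands a careful verification that the constructed indices lie below $g(k)+2$, and one must handle the irritating degenerate possibility in the third subcase where the target word $v_{g(j')}$ may itself be empty. The $+2$ slack in the bound $g(k)+2$ is precisely what absorbs the $+1$ shift arising in these empty-word fall-backs, while the main construction proceeds along the familiar Nash-Williams lines.
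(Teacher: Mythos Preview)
Your proposal is correct and follows essentially the same three-case argument as the paper's proof, with the same degenerate case up front and the same split according to the position of $i,j$ relative to $g(0)$. Your treatment of the middle case is in fact marginally cleaner than the paper's: you use that $\tilde v_{g(j')}$ is always a (not necessarily strict) prefix of $v_{g(j')}$ and appeal directly to transitivity of $\preceq_\ast$, whereas the paper separates off the empty sub-case there as well; both are fine.
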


\begin{proof}This is a simple case distinction that we prove in excruciating detail. First of all, we note that by induction on (\ref{eqn-higman-mon}) it follows that $i\leq g(i)$ for all $i<k$, which we use below. There are two main cases: A degenerate one where $v_{g(0)}=\seq{}$, in which case $v_{g(0)}\preceq_\ast v_{g(0)+1}$ and so we can set $i,j=g(0),g(0)+1<g(0)+2\leq g(k)+2$. For the non-degenerate case where $v_{g(0)}\neq \seq{}$ then we have $w_{g(0)}=\tilde{v}_{g(0)}\lhd v_{g(0)}$ and hence $w_{i}\preceq_\ast w_j$ for some $i<j<k$ by (\ref{eqn-higman-min}). There are three further possibilities:
\begin{enumerate}[(i)]

\item $i<j<g(0)$: Then $v_i=w_i\preceq_\ast w_j=v_j$ and $j<k\leq g(k)<g(k)+2$.

\item $i<g(0)\leq j$: Then $v_i=w_i\preceq_\ast w_j=\tilde v_{g(j')}$ where $j=g(0)+j'$. Either $v_{g(j')}=\seq{}$ and so trivially $v_{g(j')}\preceq_\ast v_{g(j')+1}$, with $j'\leq j$ and hence $g(j')+1\leq g(j)+1<g(k)+2$, or $\tilde{v}_{g(j')}\lhd v_{g(j')}$ and therefore $v_i\preceq_\ast v_{g(j')}$ with $i<g(0)\leq g(j')<g(k)$.

\item $g(0)\leq i<j$: Then $\tilde{v}_{g(i')}=w_i\preceq_\ast w_j=\tilde{v}_{g(j')}$ with $i=g(0)+i', j=g(0)+j'$. If either $\tilde{v}_{g(i')}=\seq{}$ or $\tilde{v}_{g(j')}=\seq{}$ then the result follows exactly as in part (ii), and otherwise we have ${v}_{g(i')}=\tilde{v}_{g(i')}\ast\bar{v}_{g(i')}$ and $v_{g(j')}=\tilde{v}_{g(j')}\ast\bar{v}_{g(j')}$ and since $i'<j'\leq j<k$ it follows by (\ref{eqn-higman-mon}) that $g(i')<g(j')$ and $\bar{v}_{g(i')}\preceq \bar{v}_{g(j')}$ and hence $v_{g(i')}\preceq_\ast v_{g(j')}$, and since $g(j')\leq g(j)<g(k)$ we're done.
	
\end{enumerate}

In all cases we have found some $i''<j''<g(k)+2$ with $v_{i''}\preceq_\ast v_{j''}$. \end{proof}

\begin{proof}[Proof of Theorem \ref{res-higman-formal}]First of all, let $\lhd$ denote the strict prefix relation on words, so that $a\lhd b$ iff $|a|<|b|$ and $(\forall i<|a|)(a_i=b_i)$. This is clearly wellfounded, and we can assume for argument's sake that it is decidable, which is automatically the case when $X$ is a base type. Now, define the piecewise predicate $B(u)$ on infinite sequences of words by $B(u):\equiv (\forall n) P(\initSeg{u}{n})$, where
\begin{equation*}P(s):\equiv (\forall i<j<|s|)(s_i\npreceq_\ast s_j).\end{equation*}
Suppose that $(\exists u)B(u)$. Then by $\ZL$ there exists some $v:(X^\ast)^\NN$ such that
\begin{equation*}(\ast) \ \ \ B(v)\wedge(\forall w\llhd v)\neg B(w).\end{equation*}
Now let $\tilde v,\bar v$ and $w$ be defined as in Lemma \ref{res-higman-fiddly}, where $g$ is the function satisfying
\begin{equation*}(\forall i<j)(g(i)<g(j)\wedge \bar v_{g(i)}\preceq \bar v_{g(j)})\end{equation*}
which exists by $\sWQO(\preceq)$. Then (\ref{eqn-higman-min}) holds for some $k$ by minimality of $v$, since if $w_{g(0)}\lhd v_{g(0)}$ then $w\llhd v$, and (\ref{eqn-higman-mon}) holds for \emph{any} $k$, therefore by Lemma \ref{res-higman-fiddly} there exists $i<j$ such that $v_i\preceq v_j$, contradicting $B(v)$. Therefore $(\exists u)B(u)$ is false, which implies that for all $u$ there exists some $i<j$ such that $u_i\preceq_\ast u_j$, which is $\WQO(\preceq_\ast)$. Hence we have shown that $\sWQO(\preceq)\to \WQO(\preceq_\ast)$.\end{proof}

While the derivation above is not fully formal in the sense that would be expected were we to formally extract a program using a proof assistant, in contrast to the textbook proof given in Section \ref{sec-intro} it makes explicit important quantitative information which will guide us in constructing a realizing term, as we will see later.

Now to our main problem, which is to prove that $(\{0,1\},=_\ast)$ is a WQO. From now on we will equate the two letter alphabet $\{0,1\}$ with our type $\BB$. Suppose that $\preceq$ is now just $=_\BB$, which is clearly a WQO. In order to be able to apply Higman's lemma, we need to establish $\sWQO(=_\BB)$, either by formalising Lemma \ref{res-ramsey} or by a direct argument. We choose the latter.

\begin{theorem}\label{res-bool-WQO-formal}$\PAomega+\QFAC\vdash\sWQO(=_\BB)$\end{theorem}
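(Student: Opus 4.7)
My plan is to exploit the fact that on the two-element type $\BB$, the statement $\sWQO(=_\BB)$ is essentially a pigeonhole principle: in any infinite boolean sequence at least one of the two values must appear infinitely often, and selecting the indices at which that value is attained yields a strictly increasing subsequence on which $x$ is constant, which in particular is weakly $=_\BB$-increasing. The ambient logic of $\PAomega$ is classical, so I am free to split on this dichotomy, and $\QFAC$ is exactly what is needed to turn the resulting existential witnesses into a single function $g$.

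Concretely, I would fix an arbitrary $x:\NN\to\BB$ and perform a classical case distinction on
\[(\forall N)(\exists m>N)(x_m=_\BB 0)\quad\vee\quad(\exists N)(\forall m>N)(x_m=_\BB 1).\]
In the first branch the matrix after $(\forall N)$ has the shape $(\exists m)A_0(N,m)$ with $A_0$ quantifier-free, so $\QFAC$ supplies some $h:\NN\to\NN$ with $h(N)>N$ and $x_{h(N)}=_\BB 0$ for every $N$. I would then define $g$ by primitive recursion, $g(0):=h(0)$ and $g(n+1):=h(g(n))$; by construction $g$ is strictly increasing and $x_{g(n)}=_\BB 0$ for all $n$, so $x_{g(i)}=_\BB x_{g(j)}$ whenever $i<j$. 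In the second branch I would simply set $g(n):=N+n+1$ for the witnessing $N$, producing a strictly increasing $g$ on which $x$ is constantly $1$. Classical disjunction elimination then delivers the required $g$ in both cases and hence $\sWQO(=_\BB)$.

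There is no real obstacle here beyond the classical shape of the opening disjunction. In particular I would deliberately avoid routing through Lemma~\ref{res-ramsey}, keeping the argument at the level of a decidable matrix so that the only non-constructive ingredients are a single instance of excluded middle and one application of $\QFAC$ to a quantifier-free formula. This leanness is the whole point of giving a direct proof: it should pay off later by keeping the extracted realizer for $\sWQO(=_\BB)$ as transparent as possible, and hence the full realizer for Higman's lemma correspondingly simpler when it is built on top.
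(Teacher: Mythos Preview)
Your proposal is correct and follows essentially the same approach as the paper: a single instance of $\Sigma^0_2$ excluded middle for the pigeonhole dichotomy, one application of $\QFAC$ to a quantifier-free matrix, and a primitive recursive construction of $g$. The only organisational difference is that the paper first factors through the intermediate statement $(\forall x)(\exists b)(\forall n)(\exists k\geq n)(x_k=b)$ and then builds $g$ uniformly in $b$, whereas you handle the two disjuncts separately (applying $\QFAC$ only in the first branch and writing down an explicit $g$ in the second); this is a cosmetic distinction and the proof-theoretic content is the same.
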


\begin{proof}We will first show that
\begin{equation}\label{eqn-bool} (\forall x^{\NN\to X})(\exists b^\BB)(\forall n)(\exists k\geq n)(x_k=b).\end{equation}
Fix some sequence $x:\BB^\NN$. By the law of excluded middle we have
\begin{equation*}(\exists n)(\forall k\geq n)(x_k=0)\vee (\forall n)(\exists k\geq n)(x_k=1).\end{equation*}
If the left hand side of the disjunction holds we set $b:=0$. We have that there is some $N$ such that $x_k=0$ for all $k\geq N$, and so for an arbitrary number $n$, setting $k:=\max\{N,n\}$ yields $k\geq n$ and $x_k=0$. If the right hand side holds, we set $b:=1$ and we are done by definition.
	
So we have proved (\ref{eqn-bool}). To establish $\sWQO(=_\BB)$, take some $x$ and let $b$ be such that $(\forall n)(\exists k\geq n)(x_k=b)$. By $\QFAC$ there exists some $f:\NN\to\NN$ satisfying 
\begin{equation*}(\forall n)(f(n)\geq n\wedge x_{f(n)}=b).\end{equation*}
Now, define $g:\NN\to\NN$ via primitive recursion as
\begin{equation*}g(0):=f(0) \mbox{ \ \ and \ \ } g(n+1):=f(g(n)+1).\end{equation*}		
Then it is clear that $g(n)<g(n+1)$ and $x_{g(n)}=b$, and therefore
\begin{equation*}(\forall i<j)(g(i)<g(j)\wedge x_{g(i)}=b=x_{g(j)})\end{equation*}
and we're done.\end{proof}
Now, putting together Theorems \ref{res-higman-formal} and \ref{res-bool-WQO-formal}, we have:
\begin{corollary}\label{res-boolean-formal}$\PAomega+\QFAC+\ZL\vdash\WQO(=_{\BB,\ast})$.\end{corollary}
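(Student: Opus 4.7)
The plan is to obtain this as a direct consequence of the two preceding theorems by a single modus ponens. First I would instantiate Theorem \ref{res-higman-formal} with the type $X := \BB$ and the quasi-order $\preceq \; := \; =_\BB$; note that equality on booleans is primitive recursive and reflexivity/transitivity are provable in $\PAomega$, so it qualifies as a quasi-order in the sense required by the theorem. This instantiation yields
\begin{equation*}
\PAomega+\QFAC+\ZL\vdash\sWQO(=_\BB)\to\WQO(=_{\BB,\ast}).
\end{equation*}

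Next I would invoke Theorem \ref{res-bool-WQO-formal}, which gives $\PAomega+\QFAC\vdash\sWQO(=_\BB)$, and therefore also $\PAomega+\QFAC+\ZL\vdash\sWQO(=_\BB)$ by weakening the axioms. Combining these two derivations by modus ponens in the ambient predicate calculus gives $\PAomega+\QFAC+\ZL\vdash\WQO(=_{\BB,\ast})$, as required.

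There is no real obstacle here, since the corollary is just the composition of the two main theorems of the section at the boolean instance. The only minor thing worth verifying is that the induced embeddability relation $=_{\BB,\ast}$ arising from Theorem \ref{res-higman-formal} with $\preceq$ taken to be $=_\BB$ agrees (provably in $\PAomega$) with the embeddability relation on $\BB^\ast$ referred to in the statement of the corollary; but both are defined by the same primitive recursive bounded search over strictly increasing maps between index sets, so this identification is immediate by the definitions given just before Theorem \ref{res-higman-formal}.
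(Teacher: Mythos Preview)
Your proposal is correct and follows exactly the paper's approach: the paper simply states that the corollary is obtained by ``putting together Theorems \ref{res-higman-formal} and \ref{res-bool-WQO-formal}'', which is precisely the instantiation-plus-modus-ponens you describe. Your additional remarks (that $=_\BB$ qualifies as a quasi-order and that the induced $=_{\BB,\ast}$ matches the intended relation) are just making explicit what the paper leaves implicit.
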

We summarise the main structure of our proof of $\WQO(=_{\BB,\ast})$ in Figure 1. There are three main parts to the proof, each of which will be treated somewhat separately in what follows, namely:
\begin{enumerate}[(1)]
	
\item A proof of $\sWQO(=_\BB)$ given as Theorem \ref{res-bool-WQO-formal}, which uses an instance of the law of excluded middle for $\Pi^0_2$ formulas. 

\item A single instance of $\ZL$ applied to the piecewise formula $(\forall i<j)(u_i\npreceq_\ast u_j)$, set out in the main the proof of Theorem \ref{res-higman-formal}.

\item The derivation of a contradiction from $\sWQO(\preceq)$ combined with the existence of a minimal bad sequence, which is Lemma \ref{res-higman-fiddly}.	
	
\end{enumerate}
Having now introduced the theory of WQOs and given a formal proof of our main result, the remainder of the paper will be dedicated to constructing a program which finds an embedded pair of words in an arbitrary input sequence. We will introduce our main tool - G\"{o}del's functional interpretation - in the next section, then each of the three main components will be analysed in turn in Sections \ref{sec-sWQO}, \ref{sec-ZLa}-\ref{sec-ZLb} and \ref{sec-higman}, respectively.
\begin{figure}[t]\label{fig-formal}\begin{center}

\AxiomC{$\mbox{\scriptsize Theorem \ref{res-bool-WQO-formal}}$}
\noLine
\UnaryInfC{$\vdots$}
\noLine
\UnaryInfC{$\sWQO(=_\BB)$}
\AxiomC{$(\exists u)B(u)$}
\RightLabel{$\ZL$}
\UnaryInfC{$(\exists v)(B(v)\wedge(\forall w\llhd v)\neg B(w))$}
\RightLabel{$\mbox{\scriptsize Lemma \ref{res-higman-fiddly}}$}
\BinaryInfC{$\bot$}
\UnaryInfC{$\WQO(=_{\BB,\ast})$}
\DisplayProof

\end{center}\caption{A map of the formal proof}\end{figure}

\section{G\"{o}del's functional interpretation}
\label{sec-goedel}

We now put well quasi-orders aside for a moment, and introduce the second main topic of this paper: G\"{o}del's functional (or `Dialectica') interpretation. This is in itself something of a challenge for an author: The functional interpretation is one of those syntactical objects - particularly common in proof theory - whose basic definition can be given in a few lines and whose characterising theorem (in this case soundness) can be set up in a couple of pages, and yet none of this is necessarily remotely helpful in giving the unacquainted reader any real insight into what it actually does! In reality, the functional interpretation is an extraordinarily subtle idea which continues to be studied from a range of perspectives, and the fact that it forms one of the central techniques of the highly successful proof mining program is testament to its power. For a comprehensive treatment of the functional interpretation and its role in program extraction, the reader is encouraged to consult the standard textbook \cite{Kohlenbach(2008.0)}, or alternatively the shorter chapter \cite{AvFef(1998.0)}.

Nevertheless, in an effort to make this essay as accessible as possible it is important that I say something about the interpretation here. So my plan is as follows: in Sections \ref{sec-goedel-basic}-\ref{sec-goedel-interpretation} below I will begin by defining the interpretation, and will state without proof the main results on program extraction. This will all be standard material. Then in Section \ref{sec-goedel-meaning} I will employ the slightly unconventional tactic of explaining on a high level how the functional interpretation treats a series of formulas of a specific logical shape, which appear several times in the remainder of this work. Finally, in Section \ref{sec-sWQO}, I will present in quite some detail the extraction of a simple program from the proof of Theorem \ref{res-bool-WQO-formal}, which will conveniently serve simultaneously as a illustration of the functional interpretation in action and the first step in our main challenge!

\subsection{The basics}
\label{sec-goedel-basic}

In one sentence, G\"{o}del's functional interpretation is a syntactic translation which takes as input a formula $A$ in some logical theory $\mathcal{L}$ and returns a new formula $A':=(\exists x)(\forall y)\dt{A}{x}{y}$ where $x$ and $y$ are sequences of potentially higher type variables, and $\dt{A}{x}{y}$ is in some sense computationally neutral, which in this article will just mean quantifier-free and hence decidable (since characteristic functions for all quantifier-free formulas can be constructed in $\PAomega$). The idea behind the translation is that $A\leftrightarrow A'$ over some reasonable higher-type theory, but latter can be witnessed by some term in a calculus $T$. We say that the functional interpretation soundly interprets $\mathcal{L}$ in $T$, if for any formula in the language of $\mathcal{L}$ we have
\begin{equation*}\mathcal{L}\vdash A \Rightarrow\mbox{there exists some closed term $t$ of $T$ such that $\mathcal{T}\vdash\dt{A}{t}{y}$},\end{equation*}
where $\mathcal{T}$ represent some verifying theory which allows us to reason about terms in our calculus $\mathcal{T}$. Crucially, the soundness proof comes equipped with a method of constructing such a realizer $t$ from the proof of $A$. The direct approach above typically works for \emph{intuitionistic} theories $\mathcal{L}$ extended with some weak semi-classical axioms (for example Markov's principle), but for theories $\mathcal{L}_c$ based on full classical logic, we need to precompose the functional interpretation with a negative translation $A\mapsto \nt{A}$. Therefore from now on, soundness of the functional interpretation for classical theories refers to the following:
\begin{equation*}\mathcal{L}_c\vdash A \Rightarrow\mbox{there exists some closed term $t$ of $T$ such that $\mathcal{T}\vdash\dt{\nt{A}}{t}{y}$}.\end{equation*}
In this article, our $\mathcal{L}_c$ will be $\PAomega+\QFAC$, later extended with $\ZL$. But before we go further, we need to introduce our functional calculus $T$.

\subsection{The programming language}
\label{sec-goedel-prog}

Our interpreting calculus will be a standard variant of G\"{o}del's system T, extended with product and finite sequence types as with our variant of $\PAomega$. System $T$ is well-known enough that we feel no need to give a proper definition here: in any case full details can be found in many places, including the aforementioned sources \cite{AvFef(1998.0),Kohlenbach(2008.0),Troelstra(1973.0)}. In a sentence: System $T$ is a simply typed lambda calculus which allows the definition of functionals via primitive recursion in all higher types. We summarise the basic constructions of the calculus below, if only to allow the reader to become familiar with our notational conventions. We take the types of $T$ to be the same as those in our logical system $\PAomega$, namely those build from $\BB$ and $\NN$ via product, sequence and arrow types. Terms of the calculus include the following:
\begin{itemize}

\item\textbf{Functions.}  We allow the construction of terms via lambda abstraction and application: if $x:X$ and $t:Y$ then $\lambda x.t:X\to Y$, while if $t:X\to Y$ and $s:X$ then $ts: Y$, and these satisfy the usual axioms, e.g. $(\lambda x.t[x])(s)=t[x\backslash s]$.

\item \textbf{Canonical objects.} For each type $X$ we define a canonical `zero object' $0_X:X$ in the standard manner, with $0_\NN=0$, $0_\BB=0$, $0_{\textbf{1}}=()$, $0_{X\times Y}=\pair{0_X,0_Y}$, $0_{X^\ast}=\seq{}$ and $0_{X\to Y}=\lambda x.0_Y$.

\item \textbf{Products.} Given $z:X\times Y$ we often write just $z_0,z_1$ for the projections $\pi_0z:X$, $\pi_1z\in Y$. This will also be the case for sequences, where for $z:(X\times Y)^\NN$, $z_0:X^\NN$ is defined by $(z_{0})_{n}:=\pi_0z_n$ and so on. For $x:X$ and $y:Y$ we have a pairing operator $\pair{x,y}:X\times Y$. 

\item \textbf{Sequences.} As before, given $s: X^\ast$ we denote by $|s|$ the length of $s$, for $x: X$ we define $s\ast x: X^\ast$ by $\seq{s_0,\ldots,s_{k-1},x}$ i.e. the concatenation of $s$ with $x$, and use this also for the concatenation of $s$ with another finite sequence $s\ast t:X^\NN$ or an infinite sequence $s\ast\alpha:X^\NN$. For $\alpha: X^\NN$ we let $\initSeg{\alpha}{n}:=\seq{\alpha_0,\ldots,\alpha_{n-1}}$. 

\item \textbf{Recursors.} For each type we have a recursor $\rec_X$ which has the defining equations
\begin{equation*}\rec_X^{a,h}(0)=_X a \ \ \mbox{and} \ \ \rec_X^{a,h}(n+1)=_X hn(\rec^{a,h}(n)).\end{equation*}
for parameters $a:X$ and $h:\NN\to X\to X$.

\end{itemize}

Note that having access to recursors of arbitrary finite type means that along with all normal primitive recursive functions we can define e.g. the Ackermann function (using $\rec_{\NN\to\NN}$). Indeed, the closed terms of type $\NN\to\NN$ definable in $T$ are the provably recursive functions of Peano arithmetic, a fact which follows from the soundness of the functional interpretation.

There are a couple of further remarks to be made. First, we have presented system $T$ as a equational calculus, but of course we could have instead used a conversion rule $\to_T$, in which case system $T$ can be viewed as a fragment of PCF consisting only of total objects. More concretely, any term of system $T$ can be straightforwardly written as a functional program, and we encourage the reader to think of system $T$ in this manner, as a high level means of describing real programs.

Finally, in the previous section we wrote $\mathcal{T}\vdash\dt{A}{t}{y}$, which implies that $T$ also comes equipped with a logic $\mathcal{T}$ for verifying programs. There are various ways of defining the underlying logic of system $T$ - traditionally it is presented as a minimal quantifier-free calculus with an induction axiom, although alternatively we can just identify $\mathcal{T}$ with the fully extensional variant of $\PAomega$, extended with additional axioms whenever we need them. Such distinctions are more relevant for foundational issues such as relative consistency proofs, where it was the goal to make $\mathcal{T}$ as weak as possible. Here we have no such concerns, and so we reason about the correctness of our extracted programs in a fairly free manner.

\subsection{The interpretation}
\label{sec-goedel-interpretation}

The functional interpretation $\dt{A}{x}{y}$ of a formula $A$ in the language of $\PAomega$ is defined inductively over the logical structure of $A$ as follows:
\begin{enumerate}[(i)]

\item\label{item-di} $\dt{A}{}{}:\equiv A$ if $A$ is prime

\item\label{item-dii} $\dt{A\wedge B}{x,u}{y,v}:\equiv\dt{A}{x}{y}\wedge\dt{B}{u}{v}$

\item\label{item-diii} $\dt{A\vee B}{b^\BB,x,u}{y,v}:\equiv \dt{A}{x}{y}\vee_b \dt{B}{u}{v}$

\item\label{item-div} $\dt{A\to B}{f,g}{x,v}:\equiv \dt{A}{x}{gxv}\to\dt{B}{fx}{v}$

\item\label{item-dv} $\dt{\exists t^X  A(t)}{z,x}{y}:\equiv \dt{A(z)}{x}{y}$

\item\label{item-dvi} $\dt{\forall t^X A(t)}{f}{z,y}:\equiv \dt{A(z)}{fz}{y}$

\end{enumerate}
where in clause (\ref{item-diii}) we define
\begin{equation*}P\vee_b Q:\equiv (b=0\to P)\wedge (b=1\to Q).\end{equation*}
At first glance, the functional interpretation looks very much like a standard BHK interpretation, with the exception of the treatment of implication (\ref{item-div}), which is in many ways the characterising feature of the interpretation. Though this may appear to be a little mysterious, it should be viewed as the `least non-constructive' Skolemization of the formula
\begin{equation*}(\exists x)(\forall y)\dt{A}{x}{y}\to (\exists u)(\forall v)\dt{B}{u}{v}\end{equation*}
which goes via 
\begin{equation*}(\forall x)(\exists u)(\forall v)(\exists y)(\dt{A}{x}{y}\to\dt{B}{u}{v})\end{equation*}
as an intermediate step. In the original 1958 paper \cite{Goedel(1958.0)}, G\"{o}del proved that the usual first order theory of Heyting arithmetic could be soundly interpreted in System $T$. It follows directly that Peano arithmetic can also be interpreted in $T$ when precomposed with the negative translation, and in fact it is not difficult at all to extend these results to the higher-order extensions of arithmetic with quantifier-free choice:
\begin{theorem}\label{res-PA-sound}Let $A(a)$ be a formula in the language of (weakly-extensional) $\PAomega$ containing only $a$ free. Then 
\begin{equation*}\PAomega+\QFAC\vdash A(a)\Rightarrow \mathcal{T}\vdash\dt{\nt{A(a)}}{t(a)}{y}\end{equation*}
where $t$ is a closed term of $T$ which can be formally extracted from the proof of $A(a)$.\end{theorem}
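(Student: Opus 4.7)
The plan is to follow the standard two-step recipe: first pass the proof through a negative translation to land in an intuitionistic theory, then apply the soundness theorem for the functional interpretation of the intuitionistic theory in System $T$.

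First I would fix a negative translation $A\mapsto\nt{A}$ (Kuroda's variant is perhaps the cleanest: double-negate the whole formula and insert $\neg\neg$ after each $\forall$), and verify by a routine induction on the length of derivations that
\begin{equation*}
\PAomega + \QFAC \vdash A \;\Longrightarrow\; \mathrm{HA}^\omega + \QFAC + \mathrm{MP} \vdash \nt{A},
\end{equation*}
where $\mathrm{HA}^\omega$ is the intuitionistic variant and $\mathrm{MP}$ is Markov's principle in all finite types. The only non-trivial check is that the negative translation of $\QFAC$ and of the classical laws (double-negation elimination, excluded middle for quantifier-free formulas) are derivable in $\mathrm{HA}^\omega + \mathrm{MP}$, which holds because the kernels are decidable.

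The main work is then to show that the functional interpretation soundly interprets $\mathrm{HA}^\omega + \QFAC + \mathrm{MP}$ in System $T$: for every theorem $B$ with Dialectica transform $\dt{B}{x}{y}$, we extract a closed term $t$ of $T$ such that $\mathcal{T}\vdash\dt{B}{t}{y}$. This proceeds by induction on the derivation in some standard Hilbert- or natural-deduction-style calculus, writing down for each logical axiom and rule its realizer in $T$. The axioms of intuitionistic predicate logic are handled by identity, projections, pairing and lambda abstraction; the quantifier rules introduce Skolem functions and applications; modus ponens composes realizers exactly as prescribed by the clause for implication; and the arithmetical axioms are realized using the recursors $\rec_X$ of $T$. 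In particular, the induction schema on a formula with interpretation $(\exists x)(\forall y)A_0(n,x,y)$ is realized by primitive recursion in the type of $x$, which is precisely why we need recursors at \emph{all} finite types. Markov's principle admits an essentially trivial realizer (its $D$-part already exhibits a witness), and $\QFAC$ is realized by the very Skolem function whose existence it asserts, using decidability of the quantifier-free kernel.

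The most delicate step---and the main obstacle---is the rule of contraction, tied to the clause for implication. Contracting $A\to A\to B$ to $A\to B$ in the functional interpretation requires, given two candidate witness-functions for $B$ and two challenge-functions against $A$, to select which branch to use based on a quantifier-free test; this hinges on decidability of the $D$-kernel and on definition-by-cases in $T$. The very same mechanism forces us to restrict to the weakly-extensional variant of $\PAomega$, since the full extensionality axiom is not itself Dialectica-interpretable whereas the weak extensionality \emph{rule} is compatible with the construction. Chaining the two soundness results then yields the claim: given $\PAomega+\QFAC\vdash A(a)$, the negative translation produces a derivation $\mathrm{HA}^\omega+\QFAC+\mathrm{MP}\vdash\nt{A(a)}$, from which the inductive procedure extracts a closed term $t$ of $T$ with $\mathcal{T}\vdash\dt{\nt{A(a)}}{t(a)}{y}$.
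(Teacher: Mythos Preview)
Your outline is correct and follows the standard two-step route (negative translation into $\mathrm{HA}^\omega+\QFAC+\mathrm{MP}$, then soundness of the Dialectica interpretation by induction on derivations). Note, however, that the paper does not actually prove this theorem: it states the result and refers the reader to Kohlenbach's textbook for a modern presentation and proof, so there is no in-paper argument to compare against beyond observing that your sketch is exactly the standard proof one finds there.
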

A modern presentation and proof of this result can be found in \cite{Kohlenbach(2008.0)}, which also discusses the various theories $\mathcal{T}$ in which soundness can be formalised. As simple as Theorem \ref{res-PA-sound} appears, understanding how the combination of negative translation and functional interpretation treats even simple logical formulas is far from straightforward, and is often a stumbling block when one first encounters the ideas of applied proof theory. We now try to provide some insight into this.

\subsection{The \emph{meaning} of the interpretation}
\label{sec-goedel-meaning}

In order that the reader not familiar with the functional interpretation and program extraction can follow the main part of the paper, it is important that we highlight how the functional interpretation combined with the negative translation treats a handful of key formulas. Note that we have not yet stated which negative translation we use. Unless one wants to formalise the translation this is not so important: Typically what we do is take some arbitrary choice of $\nt{A}$ and rearrange it into a simpler, intuitionistically equivalent formula which is easier to interpret.

\subsubsection{$\Pi_2$ formulas $A:\equiv (\forall x)(\exists y) B(x,y)$}
\label{sec-goedel-meaning-pi2}

We begin with one of the key properties of the functional interpretation, which makes it so useful for program extraction. The negative translation of a $\Pi_2$ formula is equivalent to $(\forall x)\neg\neg (\exists y) B(x,y)$. However, it is not difficult to see that the functional interpretation translates $\neg\neg (\exists y) B(x,y)$ to $(\exists y)B(x,y)$, and so in particular the functional interpretation admits Markov's principle. Therefore
\begin{equation*}(\forall x)\neg\neg (\exists y) B(x,y)\mbox{ is translated to }(\exists f)(\forall x)B(x,fx)\end{equation*}
and therefore in theory we can extract a program directly witnessing a $\Pi_2$ formula, even when that formula is proven classically. Note that the statement $\WQO(\preceq)$ is a $\Pi_2$ formula, and so even though we use a number of non-constructive principles in its proof, we can still hope to extract a program $\Phi$ witnessing it!

\subsubsection{$\Sigma_2$ formulas $A:\equiv (\exists x)(\forall y) B(x,y)$}
\label{sec-goedel-meaning-sig2}

In contrast to $\Pi_2$ formulas, $\Sigma_2$ formulas are more problematic, as there provable $\Sigma_2$ formulas which cannot in general be directly witnessed by a computable function (the Halting problem being the classic example). Here, the negative translation is equivalent to $\neg\neg (\exists x)(\forall y) B(x,y)$, and functional interpretation acts as follows
\begin{equation*}\begin{aligned}\neg\neg (\exists x)(\forall y) B(x,y)&\mapsto \neg (\exists f)(\forall x)\neg B(x,fx)\\ &\mapsto (\forall f)(\exists x)\neg\neg B(x,fx)\\
&\mapsto (\forall f)(\exists x) B(x,fx) \ \ \ (\ast) \\
&\mapsto(\exists \Phi)(\forall f) B(\Phi f,f(\Phi f)). \end{aligned}\end{equation*}
Note that we can omit double negations in front of $B(x,y)$ as this is a quantifier-free formula. Nevertheless, the interpretation of our original formula gives us something `indirect', which in this case coincides with Kreisel's well-known `no-counterexample' interpretation (although in general the functional interpretation is different). The intuitive idea is that $f$ is a function which attempts to witness falsity of $A$ i.e. $(\forall x)(\exists y)\neg B(x,fx)$. Then the functional $\Phi$ takes any proposed `counterexample function' and shows that it must fail. Over classical logic, the existence of such a functional $\Phi$ is equivalent to the existence of some $x$ satisfying $(\forall y)B(x,y)$, but unlike $x$ it can be directly constructed.

Another way of understanding the meaning of $\Phi$ is as a program which constructs an \emph{approximation} to the non-constructive object $x$. In general, we cannot compute an $x$ which satisfies $B(x,y)$ for all $y$, but given some $f$ we can find an $x$ which satisfies $B(x,fx)$. In this setting, $f$ should be seen as a function which calibrates \emph{how good} the approximation should be. We make extensive use of this intuition later, where we explain how the functional interpretation of the minimal bad sequence construction can be viewed as the statement that arbitrarily good `approximate' minimal bad sequences exist.

We also note that throughout this paper, we will often express the interpretation of $\Pi_2$ formulas in its penultimate form $(\forall f)(\exists x) B(x,fx)$ indicated by $(\ast)$ above. This is for no other reason than that it is much easier to talk about $x$ instead of $\Phi f$, and so we avoid a lot of rather messy notation! 

\subsubsection{Classical implication $A:\equiv (\exists x)(\forall y) B(x,y)\to (\exists u)(\forall v) C(u,v)$}
\label{sec-goedel-meaning-imp}

Finally, it is important to sketch what happens when we want to infer the existence of a non-constructive object $u$ from the existence of another non-constructive object $x$. In this case, the negative translation of $A$ is intuitionistically equivalent to
\begin{equation*}(\exists x)(\forall y) B(x,y)\to \neg\neg(\exists u)(\forall v) C(u,v),\end{equation*}
Now, by the previous section, the functional interpretation of the conclusion yields
\begin{equation*}(\exists x)(\forall y) B(x,y)\to (\forall f)(\exists u) C(u,fu).\end{equation*}
and so interpreting the implication as a whole following clause (\ref{item-div}) we get
\begin{equation*}(\exists F,G)(\forall x,f)(B(x,Gfx)\to C(Fxf,f(Fxf))).\end{equation*}
In terms of our discussion above, $F$ and $G$ can be read as follows: For any given $x$, $Fx$ is a functional which computes an approximation to the conclusion of the implication i.e. $(\forall f)(\exists u) C(u,fu)$, where now it uses that $(\forall y) B(x,y)$ holds. The functional $G$ computes exactly how big the approximation of the \emph{premise} has to be in order to build an approximation of the conclusion: this is given by $Gf$. Note that whenever we have a functional $\Phi$ which builds an approximation to the premise in this way i.e. $B(\Phi(Gf),Gf(\Phi(Gf)))$ we can use it to construct an approximation to the conclusion.

While all this may sound extremely intricate, it will hopefully become clearer when we see some concrete examples in the sequel.

\section{Interpreting the proof of $\sWQO(=_\BB)$}
\label{sec-sWQO}

We now give our first illustration of the functional interpretation in action. In Theorem \ref{res-bool-WQO-formal} we showed that the statement $\sWQO(=_\BB)$ could be formalised in $\PAomega+\QFAC$, and hence by Theorem \ref{res-PA-sound} we know for sure that we can construct a program in $T$ which witnesses its functional interpretation. However, actually doing so, and ending up with a program whose behaviour can be comprehended is another matter, and in what follows we outline the philosophy emphasised later in the paper of combining formal program extraction with intuition. Note that nothing in this section is new, and if the reader prefers they can simply glance at the program we obtain in Section \ref{sec-sWQO-simp} and move straight on to Section \ref{sec-ZLa}. 

The proof of Theorem \ref{res-PA-sound} has three main components. The first is an obviously non-constructive axiom, namely the law of excluded middle for $\Sigma_2$ formulas applied to $(\exists n)(\forall k\geq n)(x_k=0)$. The second is the derivation of the auxiliary statement (\ref{eqn-bool}) from this instance of the law of excluded-middle, and the final is the derivation of $\sWQO(=_\BB)$ from (\ref{eqn-bool}) by constructing the necessary primitive recursive function. We will treat each of these in turn. Before we do so, it is worth spelling out explicitly what our goal is. First note that $\sWQO(=_\BB)$ can be equivalently formulated as the $\Pi_3$ formula
\begin{equation*}(\forall x)(\exists g)(\forall n)(\forall i<j\leq n)(g(i)<g(j)\wedge x_{g(i)}=x_{g(j)})\end{equation*}
whose negative translation is equivalent to 
\begin{equation*}(\forall x)\neg\neg(\exists g)(\forall n)(\forall i<j\leq n)(g(i)<g(j)\wedge x_{g(i)}=x_{g(j)}).\end{equation*}
Therefore, referring back to Section \ref{sec-goedel-meaning-sig2} our challenge is to produce a program $\Phi$ which takes as input $x$ together with some `counterexample functional' $\omega:(\NN\to\NN)\to\NN$ and witnesses $(\exists g)$ in the formula
\begin{equation}\label{eqn-wqob}(\forall x,\omega)(\exists g)(\forall i<j\leq \omega g)(g(i)<g(j)\wedge x_{g(i)}=x_{g(j)}).\end{equation}
In other words, while we cannot hope to effectively construct a monotone subsequence $g$ in general, we can always do the next best thing and construct an approximation to it which works for all $i<j\leq\omega g$. Then when it comes to using $\sWQO(=_\BB)$ as a lemma in the proof of $\WQO(=_{\BB,\ast})$ we will need to calibrate exactly how big this approximation needs to be, in other words construct some concrete $\omega$ such that $\WQO(=_{\BB,\ast})$ follows from (\ref{eqn-wqob}).

\subsection{The law of excluded-middle for $\Sigma^0_2$ formulas}
\label{sec-sWQO-LEM}

Our first step when interpreting a classical proof is to interpret the main non-constructive axioms which are needed. When interpreting $\sWQO(\preceq)\to\WQO(\prec)$ later, our focus will be on $\ZL$. Here we must deal with the somewhat simpler law of excluded-middle for $\Sigma^0_2$ formulas:
\begin{equation*}(\exists n)(\forall k) P(n,k)\vee(\forall m)(\exists l)\neg P(m,l).\end{equation*}
The functional interpretation Skolemizes this as the $\Sigma_1$ formula
\begin{equation*}(\exists b,n,h)(\forall k,m)[P(n,k)\vee_b P(m,hm)]\end{equation*}
and following Section \ref{sec-goedel-meaning-sig2} the interpretation of the double negation of this formula is given by
\begin{equation}\label{eqn-LEM-nd}(\forall \phi,\psi)(\exists b,n,h)[P(n,\phi bnh)\vee_b P(\psi bnh,h(\psi bnh))].\end{equation}
This already looks rather complex thanks to all the function dependencies, but the way to think of $\phi$ and $\psi$ is again as counterexample functionals which represent the quantifiers $(\forall k)$ and $(\forall m)$ respectively. Now, we have two options in front of us: We can either carefully analyse the formal derivation of the negative translation of the law of excluded-middle in intuitionistic logic, or we can take this as a starting point and try to solve (\ref{eqn-LEM-nd}) directly. We choose the latter - and it is this kind of thing that characterizes our `semi-formal' approach to program extraction.

Let's look more closely at (\ref{eqn-LEM-nd}). Our boolean $b$ is just a marker which tells us which side of the conjunction holds, so essentially what we must do is find a pair $n_L$, $n_R$ and $h_L$, $h_R$ which satisfy either $P(n_L,\phi 0n_Lh_L)$ or $\neg P(\psi 1n_Rh_R,h_R(\psi 1n_Rh_R))$. In the first case we can then define $b,n,h$ to be $0,n_L,h_L$, and in the second to be $1,n_R,h_R$. In order to do this, we want to define these so that
\begin{equation*}P(n_L,\phi 0n_Lh_L)\leftrightarrow P(\psi 1n_Rh_R,h_R(\psi 1n_Rh_R))\end{equation*}
so that $P(n_L,\phi 0n_Lh_L)\vee \neg P(\psi 1n_Rh_R,h_R(\psi 1n_Rh_R))$ follows directly from the law of excluded-middle for quantifier-free formulas. Note that we can force this equivalence to hold if 
\begin{equation*}n_L=\psi 1n_Rh_R\mbox{ \ \ \ and \ \ \ }\phi 0n_Lh_L=h_R(\psi 1n_Rh_R).\end{equation*}
So can we solve these equations? Well, the first thing we notice is that $n_R$ and $h_L$ do not depend on anything and so can be freely chosen, so we just set these to be canonical elements $n_R,h_L:=0_\NN,0_{\NN\to\NN}$ (note that this makes sense intuitively, since $n$ only plays a role in the left disjunct, and $h$ only in the right). We can then define $n_L:=\psi 10h_R$. It remains to find some $h_R$ which satisfies
\begin{equation*}h_R(\psi 10h_R)=\phi 0n_L 0=\phi 0(\psi 10h_R)0,\end{equation*}
where the latter equality follows by substituting in our definition for $n_L$. But this is easily achieved if we set $h_R:=\lambda i.\phi 0i0$. So we're done, and to summarise, (\ref{eqn-LEM-nd}) is solved by setting
\begin{equation*}b,n,h:=\begin{cases}0,\psi 10h_R,0 & \mbox{if $P(\psi 10h_R,\phi 0(\psi 10h_R)0)$}\\ 1,0,h_R & \mbox{otherwise}\end{cases}\end{equation*}
for $h_R:=\lambda i.\phi 0i0$. The reader can now easily check that this is indeed a solution by substituting it back into (\ref{eqn-LEM-nd}). Note that while we use the law of excluded-middle in a very specific way in our proof, the above would work for \emph{any} instance of $\Sigma^0_2$ law of excluded middle (in fact we don't even need the quantifiers to be of lowest type).

\subsection{Interpreting $(\forall x)(\exists b)(\forall n)(\exists k\geq n)(x_k=b)$}
\label{sec-sWQO-LEM0}

We now use the realizing term given above to witness the functional interpretation of our intermediate result $(\forall x)(\exists b)(\forall n)(\exists k\geq n)(x_k=b)$. In order to distinguish this $b$ from that of the previous section, we relabel it as $c$. Taking into account the negative translation, what we mean is to interpret is
\begin{equation*}(\forall x)\neg\neg(\exists c,f)(\forall n)(fn\geq n\wedge x_{fn}=c)\end{equation*}
and hence 
\begin{equation*}(\forall x,\xi)(\exists c,f)(f(\xi cf)\geq \xi cf\wedge x_{f(\xi cf)}=c).\end{equation*}
Again, this looks somewhat intricate, but the term $\xi cf$ simply represents the quantifier $(\forall n)$. Now, in order to prove this statement we used the law of excluded-middle for the formula $P(n,k):\equiv (k\geq n\to x_k=0)$ given some fixed sequence $x$. What we need to do is work out exactly how this was used, and following our discussion in Section \ref{sec-goedel-meaning-imp} this means realizing the implication
\begin{equation}\label{eqn-lem-imp}(\exists b,n,h)(\forall k,m)(P(n,k)\vee_b P(m,hm))\to (\forall\xi)(\exists c,f)(f(\xi cf)\geq \xi cf\wedge x_{f(\xi cf)}=c)\end{equation}
and therefore
\begin{equation*}(\forall b,n,h,\xi)(\exists k,m,c,f)[P(n,k)\vee_b P(m,hm)\to f(\xi cf)\geq \xi cf\wedge x_{f(\xi cf)}=c].\end{equation*}
This is much easier than it looks! Let us fix $b,n,h,\xi$. There are two possibilities. If $b=0$ then we must find some $k,c,f$ (we can set $m=0$) such that the conclusion follows from $P(n,k)$. It's sensible to choose $c:=0$, then it remains to find $k,f$ satisfying
\begin{equation*}(k\geq n\to x_k=0)\to f(\xi 0f)\geq \xi 0f\wedge x_{f(\xi 0f)}=0.\end{equation*}
Following our formal proof, let's define $f(i):=\max\{n,i\}$ and 
\begin{equation*}k:=f(\xi 0f)=\max\{n,\xi 0f\}=\max\{n,\xi 0(\lambda i.\max\{n,i\}))\}.\end{equation*}
Then clearly $f(\xi 0f)\geq n,\xi 0f$ and $x_{f(\xi 0f)}=0$ follows from the premise. 

In the second case $b=1$, setting $c:=1$ (and this time $k=0$) we must establish the conclusion from $\neg P(m,hm)$, i.e. find $m,f$ satisfying
\begin{equation*}hm\geq m\wedge x_{hm}=1\to f(\xi 1f)\geq \xi 1f\wedge x_{f(\xi 1f)}=1.\end{equation*}
But this more straightforward: $f:=h$ and $m:=\xi 1h$ work, so we're done. In other words, defining 
\begin{equation*}\phi_\xi 0nh:=\max\{n,\xi 0(\lambda i.\max\{n,i\}))\} \mbox{ \ \ \ and \ \ \ } \psi_\xi 1nh:=\xi 1h\end{equation*}
with $\phi_\xi 1nh=\psi_\xi 0nh=0$, we can eliminate the quantifiers $(\forall m,k)$ in (\ref{eqn-lem-imp}), and we have proven that
\begin{equation*}(\forall b,n,h,\xi)(\exists c,f)[P(n,\phi_\xi bnh)\vee_b P(\psi_\xi bnh,h(\psi_\xi bnh))\to f(\xi cf)\geq \xi cf\wedge x_{f(\xi cf)}=c]\end{equation*}
for 
\begin{equation*}c,f:=\begin{cases}0,\lambda i.\max\{n,i\} & \mbox{if $b=0$}\\ 1,h & \mbox{otherwise}.\end{cases}\end{equation*}
But we know how find $b,n,h$ which solve the premise for $\phi_\xi$ and $\psi_\xi$, so substituting those solutions in the definition above we have
\begin{equation*}(\forall\xi)(\exists c,f)(f(\xi cf)\geq \xi cf\wedge x_{f(\xi cf)}=c)\end{equation*}
for
\begin{equation}\label{eqn-cf}c,f:=\begin{cases}0,\lambda i.\max\{\psi_\xi 10h_R,i\} & \mbox{if $P(\psi_\xi 10h_R,\phi_\xi 0(\psi_\xi 10h_R)0)$}\\
1,h_R & \mbox{otherwise}\end{cases}\end{equation}
where $\phi_\xi,\phi_\xi$, $P$ and $h_R$ are defined as above.

\subsection{Simplifying the realizing term}
\label{sec-sWQO-simp}

The solution given above for finding $c$ and $f$ in $\xi$ is perfectly valid, but still somewhat tricky to understand, as it is couched in terms of the abstruse functionals which arise from our formal proof. So while an automated extraction may produce something like this, for a human being it is desirable to simplify everything and see if there is an underlying pattern.

It immediately clear by inspecting the definition (\ref{eqn-cf}) above, that there are three key terms which play a role, namely $h_R$, $\psi_\xi 10h$ and $\phi_\xi 0i0$, with substitutions $h\mapsto h_R$ and $i\mapsto \psi_\xi 10h_R$. So it makes sense to unwind each of these terms. First, notice that from the definitions of $\phi_\xi,\psi_\xi$ we have
\begin{equation*}h_R(i)=\phi_\xi 0i0=\max\{i,\xi 0(\lambda j.\max\{i,j\})\}\mbox{ \ \ \ and \ \ \ \ }\psi_\xi 10h=\xi 1h\end{equation*}
and so in particular
\begin{equation*}\begin{aligned}\psi_\xi 10h_R&=\xi 1(\lambda i.\max\{i,\xi 0(\lambda j.\max\{i,j\})\})=:a\\
\phi_\xi 0(\psi_\xi 10h_R)0&=h_R(a)\end{aligned}\end{equation*}
and our solution can already be simplified to
\begin{equation*}c,f:=\begin{cases}0,\lambda i.\max\{a,i\} & \mbox{if $x_{\max\{a,\xi 0(\lambda j.\max\{a,j\}))\}}=0$}\\ 1,\lambda i.\xi 0(\lambda j.\max\{i,j\}) & \mbox{otherwise}.\end{cases}\end{equation*}
where we use the fact that $P(a,\max\{a,\xi 0(\lambda j.\max\{a,j\}))\})\leftrightarrow x_{\max\{a,\xi 0(\lambda j.\max\{a,j\}))\}}=0$.

We now see that, far from being the syntactic mess it appeared, our realizing term can be expressed in a very natural way. By looking closer, an interesting structure emerges: Given a function $q:\NN\times\NN\to\NN$, and a pair of functions $\varepsilon,\delta:(\NN\to\NN)\to\NN$, define the pair $(\varepsilon\otimes\delta)(q):=\pair{a,b[a]}$ where
\begin{equation*}\begin{aligned}b[i]&:=\delta(\lambda j.q(i,j))\\
a&:=\varepsilon(\lambda i.q(i,b[i])).\end{aligned}\end{equation*}
This is the so-called \emph{binary product of selection functions} studied by Escard\'{o} and Oliva in \cite{EscOli(2010.0)}. Intuitively it gives a optimal play in a two player sequential game, where $q$ is assigns an outcome to each pair of moves, and $\varepsilon,\delta$ dictate the strategy of the first and second players respectively. Using this new notation, our realizer becomes
\begin{equation*}c,f:=\begin{cases}0,\lambda i.\max\{a,i\} & \mbox{if $x_{\max\{a,b[a]\}}=0$}\\ 1,\lambda i.\xi 0(\lambda j.\max\{i,j\}) & \mbox{otherwise}.\end{cases}\end{equation*}
where $\pair{a,b[a]}=(\xi 1\otimes\xi 0)(\max)$. An extension of this idea for nested sequences of the law of excluded-middle were first considered in \cite{Oliva(2006.2)}, and generalisations of the product of selection functions to so-called `unbounded' games have been used to give computational interpretations to choice principles, thereby opening up a fascinating bridge between functional interpretations and game theory \cite{EscOli(2010.0),EscOli(2011.0),EscOli(2012.1)}.

\subsection{Summary}
\label{sec-sWQO-summary}

Our aim in this section was to lead the reader through an actual example of program extraction, from a much simpler classical principle than that about to be considered below. Rather than just presenting an extracted term, our hope was to illustrate how by analysing extracted programs and applying a degree of ingenuity, one can devise descriptions of these programs which can lead to many new results. Here, the observation by Escard\'{o} and Oliva that the functional interpretation of the law of excluded-middle concealed a natural game-theoretic construction which could be extended to encompass much stronger principles led to a large body of research in a somewhat unexpected direction. Later in this paper, a notion of a \emph{learning procedure} will play somewhat analogous role to the product of selection functions above, in the sense that it will describe a very natural computational pattern that underlies our realizer, and helps us understand its behaviour.

\section{The functional interpretation of $\ZL$ - Part 1}
\label{sec-ZLa}

The basic soundness proof of the functional interpretation (Theorem \ref{res-PA-sound}) guarantees that we are able to extract a program from any proof which can be formalised in $\PAomega+\QFAC$. However, our formalisation of the minimal bad sequence construction involves something stronger, namely an instance of $\ZL$. The following three sections contain the chief novelty of our approach, namely the solution of the functional interpretation of $\ZL$ via a form of \emph{open recursion}, which will allow us in Section \ref{sec-higman} to extract a program witnessing $\WQO(=_{\BB,\ast})$.

So what exactly is the functional interpretation of $\ZL$? Let's begin by writing out the axiom in full, where now replace $B(u)$ with the piecewise formula $(\forall n)P(\initSeg{u}{n})$, and for the remainder of the paper we now assume that $P(s)$ is quantifier-free, as it is in the case of Theorem \ref{res-higman-formal}. In order to avoid nested expressions such as $P(\initSeg{\initSeg{v}{m}}{n})$ we will use the notation $\bar P(u,n):\equiv P(\initSeg{u}{n})$. Then $\ZL$ becomes
\begin{equation*}\label{eqn-ZL-p}(\exists u)(\forall n)\bar P(u,n)\to (\exists v)((\forall n)\bar P(v,n)\wedge (\forall  w\llhd v)(\exists n)\neg\bar P(w,n)).\end{equation*}
Now, there is still an additional quantifier implicit in $(\forall w\llhd v)$, but note that
\begin{equation*}(\forall w\llhd v) A(w)\leftrightarrow (\forall m,w)(w_0\lhd v_m\to A(\initSeg{v}{m}\ast w))\end{equation*}
and so $\ZL$ can be written out in a fully explicit form as
\begin{equation}\label{eqn-ZL-exp}(\exists u)(\forall n)\bar P(u,n)\to (\exists v)((\forall n)\bar P(v,n)\wedge (\forall  m,w)(w_0\lhd v_m\to (\exists n)\neg\bar P(\initSeg{v}{m}\ast w,n))).\end{equation}
Of course, we want to apply the functional interpretation to the negative translation of (\ref{eqn-ZL-exp}), which is equivalent to
\begin{equation}\label{eqn-ZL-neg}(\exists u)(\forall n)\bar P(u,n)\to \neg\neg(\exists v)((\forall n)\bar P(v,n)\wedge (\forall  m,w)(w_0\lhd v_m\to (\exists n)\neg\bar P(\initSeg{v}{m}\ast w,n))).\end{equation}
Since this is a rather intricate formula, let's break its interpretation up into pieces. Focusing on the conclusion first, and applying the interpretation under the double negation only, we obtain
\begin{equation}\label{eqn-ZL-conc-neg}\neg\neg (\exists v,\gamma^{\NN\to X^\NN \to\NN})(\forall n,m,w)(\bar P(v,n)\wedge (\underbrace{w_0\lhd v_m\to \neg\bar P(\initSeg{v}{m}\ast w,\gamma mw)}_{C(v,\gamma,m,w)}))\end{equation}
where from now on we will use the abbreviation 
\begin{equation*}C(v,\gamma,m,w):\equiv w_0\lhd v_m\to \neg\bar P(\initSeg{v}{m}\ast w,\gamma mw)\end{equation*}
as indicated in (\ref{eqn-ZL-conc-neg}). Now, applying the functional interpretation to (\ref{eqn-ZL-conc-neg}) and referring back to the discussion in Section \ref{sec-goedel-meaning-sig2} we arrive at
\begin{equation}\label{eqn-ZL-conc-nd}(\forall N,M,W)(\exists v,\gamma)(\bar P(v,Nv\gamma)\wedge C(v,\gamma,Mv\gamma,Wv\gamma))\end{equation}
where $N,M:X^\NN\to (\NN\to X^\NN\to\NN)\to\NN$ and $W:X^\NN\to (\NN\to X^\NN\to\NN)\to X^\NN$. Substituting (\ref{eqn-ZL-conc-nd}) back into (\ref{eqn-ZL-neg}) and referring to Section \ref{sec-goedel-meaning-imp} our challenge is to witness the following expression:
\begin{equation}\label{eqn-ZL-nd}(\forall u,N,M,W)(\exists n,v,\gamma)(\bar P(u,n)\to \bar P(v,Nv\gamma)\wedge C(v,\gamma,Mv\gamma,Wv\gamma))).\end{equation}
So what does the expression (\ref{eqn-ZL-nd}) \emph{intuitively} mean? In Section \ref{sec-goedel-meaning} we characterised the functional interpretation as a translation which takes fundamentally non-constructive existence statements and converts them into `approximate' existence statements, which in theory can be given a direct computational interpretation. In its original form, $\ZL$ simply states that
\begin{quote}if there exists a bad sequence $u$ then there exists a bad sequence $v$ which is minimal with respect to $\llhd$,\end{quote}
where we call $u$ bad whenever $(\forall n)\bar P(u,n)$ holds. Now, very roughly, we can read the interpreted statement (\ref{eqn-ZL-nd}) as saying something like
\begin{quote}for any sequence $u$ and counterexample functionals $N,M,W$, there exists $n,v$ and $\gamma$ such that $\bar P(u,n)$ implies that $v$ is approximately bad with respect to $N$, and $\gamma$ witnesses that it is approximately minimal with respect to $M$ and $W$.\end{quote}
When using $\ZL$ as a lemma in the proof of a $\Pi_2$ statement, as we do in Corollary \ref{res-boolean-formal}, the task of extracting a program from this proof involves calibrating exactly what kind of approximations we need.

\subsection{A rough idea}
\label{sec-ZLa-rough}

So how do we go about solving (\ref{eqn-ZL-nd}) - in other words computing a suitable $n,v$ and $\gamma$ in terms of $u,N,M$ and $W$? A natural idea might be to simply use trial and error, as follows. Given some initial sequence $u$, we could first just try $v:=u$. Let's also set $\gamma:=\gamma_u$, where $\gamma_u$ is some function that we will need to sensibly define later, and put $n:=Nu\gamma_u$. Now suppose that $\bar P(u,Nu\gamma_u)$ holds. There are two possibilities: Either $u$ is approximately minimal in the sense that $C(u,\gamma_u,Mu\gamma_u,Wu\gamma_u)$ holds, and then we're done, or $\neg C(u,\gamma_u,Mu\gamma_u,Wu\gamma_u)$ i.e.
\begin{equation*}(Wu\gamma_u)_0\lhd u_{Mug_u}\wedge \bar P(\initSeg{u}{Mu\gamma_u}\ast Wu\gamma_u,\gamma(Mu\gamma_u)(Wu\gamma_u)).\end{equation*}
But in this case, we have found a sequence $u_1:=\initSeg{u}{Mu\gamma_u}\ast Wu\gamma_u$ which is lexicographically less that $u$ and approximately bad, so could we just set $v:=u_1$ and repeat this process, generating a sequence $u\rrhd u_1\rrhd u_2\rrhd\ldots\ldots\rrhd u_k$ until we reach some $u_k$ which works? Of course, there are a lot of details to be filled in here, in particular a formal definition of $\gamma$, but the aim of Section \ref{sec-ZLb} will be to demonstrate that this informal idea \emph{does} actually work. 

However, the obvious problem we face is that we seem to be carrying out recursion over the non-wellfounded ordering $\rrhd$, and so first we must establish a set of conditions under which this kind of recursion is well-defined. This is the purpose of Section \ref{sec-OR} which follows. Before we get into the technical details, though, we want to pause for a moment and explore the general pattern hinted at above, and introduce the notion of a learning procedure, which we have alluded to several times earlier.

\subsection{Learning procedures}
\label{sec-ZLa-learning}

Our challenge in the next Sections is to take some initial sequence $u$ which is `approximately bad' and produce a $v$ which is also approximately bad, but in addition approximately minimal. For simplicity, let's forget for a moment that we're working with infinite sequences and the lexicographic ordering, and just consider a set $X$ which comes equipped with two decidable predicates $P_0(x)$ and $C_0(x)$. Of course, $P_0(x)$ intuitively represents that $x$ is approximately bad while $C_0(x)$ represents that it's approximately minimal, but here everything is greatly simplified and do not assume anything about these formulas beyond the following property, which states that if $x$ is not minimal then there must be some $y\prec x$ satisfying $P_0(y)$: 
\begin{equation}\label{eqn-minabs}(\forall x)(\neg C_0(x)\to (\exists y\prec x) P_0(y)).\end{equation}
Our aim would be to find, from any initial $x$ satisfying $P_0(x)$, some minimal $y$ satisfying $P_0(y)$ together with $C_0(y)$ i.e.
\begin{equation}\label{eqn-learn}(\forall x)(P_0(x)\to (\exists y)(P_0(y)\wedge C_0(y))).\end{equation}
It is not too hard to come up with an algorithm which takes us from a realizer of (\ref{eqn-minabs}) to a realizer of (\ref{eqn-learn}).
\begin{lemma}\label{res-learn}Suppose that $\xi:X\to X$ is a function which satisfies
\begin{equation}\label{eqn-minabs-comp}(\forall x)(\neg C_0(x)\to x\succ\xi(x)\wedge P_0(\xi(x))).\end{equation}
For any $x:X$, the \emph{learning procedure} $\lrn{\xi}{C_0}[x]$ starting at $x$ denotes the sequence $(x_i)_{i\in\NN}$ given by
\begin{equation*}x_0:=x \mbox{ \ \ \ and \ \ \ } x_{i+1}:=\begin{cases}x_i & \mbox{if $C_0(x_i)$}\\ \xi(x_i) & \mbox{otherwise}.\end{cases}\end{equation*}
Whenever $\succ$ is wellfounded, there exists some $k$ such that $C_0(x_k)$ holds, and we call the minimal such $x_k$ the limit of $\lrn{\xi}{C_0}[x]$, which we denote by $$\liml{\xi}{C_0}{x}.$$ Then the functional $\lambda x.\liml{\xi}{C_0}{x}$ is definable using wellfounded recursion over $\succ$, and realizes (\ref{eqn-learn}) in the sense that
\begin{equation}\label{eqn-learn-comp}(\forall x)(P_0(x)\to P_0(\liml{\xi}{C_0}{x})\wedge C_0(\liml{\xi}{C_0}{x})).\end{equation}
\end{lemma}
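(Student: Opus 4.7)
The plan is to establish three things in sequence: that the iteratively defined sequence $\lrn{\xi}{C_0}[x]$ eventually stabilises so that $\liml{\xi}{C_0}{x}$ is well defined, that this limit operation can be repackaged as a single wellfounded recursion on $\succ$, and that $P_0$ is preserved along the trajectory so that (\ref{eqn-learn-comp}) holds. None of the three steps is really hard; the only thing to be a little careful about is keeping the iterative description and the recursive description properly aligned.

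For termination, I would argue by contradiction. Suppose that the sequence $(x_i)$ never reaches a point at which $C_0$ holds. Then by construction every transition is $x_{i+1} = \xi(x_i)$ with $\neg C_0(x_i)$, and hence by (\ref{eqn-minabs-comp}) we obtain $x_i \succ x_{i+1}$ for all $i$. This contradicts wellfoundedness of $\succ$, so some first index $k$ with $C_0(x_k)$ must exist; the sequence is then stationary from $k$ onwards, and this minimal $k$ pins down $\liml{\xi}{C_0}{x} = x_k$.

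For definability, I would introduce an auxiliary functional $F \colon X \to X$ by the recursive clauses $F(x) := x$ when $C_0(x)$ holds and $F(x) := F(\xi(x))$ otherwise. The recursive call decreases strictly in $\succ$ thanks to (\ref{eqn-minabs-comp}), so this is a legitimate wellfounded recursion on $\succ$. A short induction shows that $F(x) = \liml{\xi}{C_0}{x}$: the case $C_0(x)$ is immediate, and otherwise both objects step first to $\xi(x)$ and then apply the same procedure, so the inductive hypothesis at $\xi(x)$ closes the argument. This identification is also where I would situate the claim that $\lambda x.\liml{\xi}{C_0}{x}$ is definable by wellfounded recursion.

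Finally, (\ref{eqn-learn-comp}) follows by wellfounded induction on $x$. Assume $P_0(x)$. If $C_0(x)$, then $F(x) = x$ realises both conjuncts directly. Otherwise, the hypothesis on $\xi$ yields both $\xi(x) \prec x$ and, crucially, $P_0(\xi(x))$; applying the induction hypothesis at $\xi(x)$ gives $P_0(F(\xi(x))) \wedge C_0(F(\xi(x)))$, and since $F(x) = F(\xi(x))$ we are done. The second conjunct of (\ref{eqn-minabs-comp}) is what does the real work here --- it is precisely the invariant guaranteeing that $P_0$ transports along every learning step all the way to the limit.
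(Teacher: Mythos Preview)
Your proof is correct and follows essentially the same approach as the paper: define the limit by a wellfounded recursion that branches on $C_0$ and recurses on $\xi(x)$, then verify (\ref{eqn-learn-comp}) by an induction that propagates $P_0$ along the trajectory. The only cosmetic differences are that the paper's recursor $L_{\xi,C_0}\colon X\to X^\ast$ records the entire finite trajectory (with the limit read off as its last element) rather than just the endpoint, and the paper verifies the invariant by a forward induction on the index $i$ rather than by wellfounded induction on $x$; neither difference is material.
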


\begin{proof}To formally construct the limit, given $C_0$ and $\xi$ define the function $L_{\xi,C_0}:X\to X^\ast$ by
\begin{equation*}L_{\xi,C_0}(x):=\begin{cases}\seq{x} & \mbox{if $C_0(x)$}\\ \seq{x}\ast L_{\xi,C_0}(\xi(x)) & \mbox{otherwise},\end{cases}\end{equation*}
which is definable via wellfounded recursion over $\succ$ since the recursive call $L_{\xi,C_0}(\xi(x))$ is only made in the event that $\neg C_0(x)$ and so $x\succ \xi(x)$ by (\ref{eqn-minabs-comp}). A simple induction over the length of $L_{\xi,C_0}(x)$ then establishes that $\liml{\xi}{C_0}{x}$ is the last element of $L_{\xi,C_0}(x)$.

That the limit satisfies (\ref{eqn-learn-comp}) essentially follows from the definition. If $P_0(x_i)$ but $\neg C_0(x_i)$ then we have $x_{i+1}=\xi(x_i)$ with $x_i\succ x_{i+1}$ and $P_0(x_{i+1})$. So it follows that if $P_0(x)$ then $P_0(x_i)$ for all $i\in\NN$. Then by the existence of a limit $x_k$ satisfying $C_0(x_k)$ we're done, since we then have $P_0(x_k)\wedge C_0(x_k)$.\end{proof}

Algorithms of the above kind can be characterised as `learning procedures' because we start with some initial attempt $x_0$ for our minimal element, and either this works or it fails, in which case we replace $x_0$ with some `improved' guess $x_0\prec x_1$ which we have learned from the failure of $x_0$ and continue in this way until we have produced an attempt $x_k$ which works and satisfies $P_0(x_k)\wedge C_0(x_k)$.

The next two sections involve adapting this basic idea to the more complex situation of constructing a realizer for the functional interpretation of $\ZL$, where the predicates $P_0$ and $C_0$ will need to take into account the counterexample functionals which determine precisely what an approximation constitutes. Moreover, we will need to adapt Lemma \ref{res-learn} so that it applies to the non-wellfounded ordering $\rrhd$.

Learning procedures as described above form the main subject of the author's paper \cite{Powell(2016.0)}, which in particular contains a solution to the functional interpretation of the least element principle for wellfounded $\succ$ that essentially forms a simple version of the realizer we construct here. Moreover, learning procedures even for certain non-wellfounded orderings are discussed in \cite[Section 5]{Powell(2016.0)}, although none of this encompasses the variant of recursion over $\rrhd$ which we require below.

\section{Recursion over $\rrhd$ in the continuous functionals}
\label{sec-OR}

In order to give a functional interpretation of $\ZL$, it is necessary that we extend system $\systemT$ with some form of recursion over the relation $\rrhd$. Since $\rrhd$ is not wellfounded, it is clear that naively introducing a general recursor over $\rrhd$ will lead to problems. However, just as $\ZL$ is equivalent to an induction principle $\OI$ over $\rrhd$, which comes with the caveat that formulas must be \emph{open} (cf. Section \ref{sec-formal-logical}), we will show that we can define a recursor over $\rrhd$ which exists in continuous models of higher-type functionals, provided that we introduce an analogous restriction for the recursor. 

The notion of recursion over $\rrhd$ is not new: In particular this forms the main topic of Berger's analysis of open induction in the framework of modified realizability \cite{Berger(2004.0)}. However, the functional interpretation requires a non-trivial adaptation of these ideas, which is the main purpose of this section.

\subsection{The problem with recursion over $\rrhd$}
\label{sec-OR-problem}

We begin by highlighting why a naive lexicographic recursor does not behave in the same way as G\"{o}del's wellfounded recursors $\rec$, as identifying the problems provides some insight into how we can potentially circumvent them. Suppose that given some pair $(X,\rhd)$ where $X$ is a type and $\rhd$ a wellfounded decidable relation on $X$, together with output type $Y$, we add to our programming language $\systemT$ an open recursor $\orec_{(X,\rhd),Y}$ which has the defining equation
\begin{equation*}\orec_{(X,\rhd),Y}^H(u)=_Y Hu(\lambda n,v\; . \; \orec^H(\initSeg{u}{n}\ast v)\mbox{ if $v_0\lhd u_n$})\end{equation*}
where `$\mbox{if $v_0\lhd u_n$}$' is short for `$\mbox{if $v_0\lhd u_n$, else $0_Y$}$'. Does our recursor give rise to well-defined functionals?

Let's consider the very simple case $X=\BB$ where $b_0\rhd b_1$ only holds in the case $1\rhd 0$, and set the output type $Y:=\NN$. Define the closed functional $\Phi:(\BB^\NN\to (\NN\to \BB^\NN\to\NN)\to\NN)\to\NN$ by
\begin{equation*}\Phi H:=\orec_{(\BB,\rhd),\NN}^H(\lambda k.1).\end{equation*}
Then we can show that the type structure of all set-theoretic functionals is no longer a model of $\systemT+(\orec_{(\BB,\rhd),\NN})$. To see this, consider the functional $H:\BB^\NN\to (\NN\to \BB^\NN\to\NN)\to\NN$ defined by
\begin{equation*}Huf:=\begin{cases}1+fn(0,1,1,\ldots) & \mbox{for the least $n$ with $u_n=1$}\\ 0 & \mbox{if no such $n$ exists}.\end{cases}\end{equation*}
Suppose that $\Phi H=N$ for some natural number $N$. Then unwinding the defining equation of $\orec_{(\BB,\rhd),\NN}^H$ we get
\begin{equation*}\begin{aligned}N=\Phi H= 1+\orec^H(0,1,1,\ldots)=2+\orec^H(0,0,1,1,\ldots)=\ldots=N+1+\orec^H(\underbrace{0,\ldots,0}_{\mbox{\scriptsize $N+1$ times}},1,1,\ldots)\geq N+1,\end{aligned}\end{equation*}
a contradiction. Here it is not necessarily surprising that we run into problems. But suppose that we demand that $H$ be \emph{continuous}, in the sense that we can determine the value of $Huf$ based on a finite initial segment of $u$ and $f$. Unfortunately, it turns out that if we increase the output type to $Y:=\NN\to\NN$ then not even continuity (or indeed even computability) can save us: Let $G:\BB^\NN\to (\NN\to\BB^\NN\to\NN^\NN)\to\NN^\NN$ be defined by
\begin{equation*}Gufn:=1+fn(0,1,1,\ldots)(n+1),\end{equation*}
and let $N:\NN$ be given by $N:=\orec^G(\lambda k.0)(0)$. Then similarly to before, we have
\begin{equation*}N=1+\orec^G(0,1,1,\ldots)(1)=2+\orec^G(0,0,1,1,\ldots)(2)=\ldots=N+1+\orec^G(\underbrace{0,\ldots,0}_{\mbox{\scriptsize $N+1$ times}},1,1,\ldots)(N+1)\geq N+1,\end{equation*}
which is again inconsistent with the axioms of Peano arithmetic. So even though $N$ is a closed term in $\systemT+(\orec_{(\BB,\rhd),\NN^\NN})$ of base type, there is no natural interpretation of $N$ in even in continuous models. So what does it take to ensure that recursion over $\rrhd$ \emph{does} have an interpretation in continuous models? To this end we will discuss two possible restrictions, namely:
\begin{itemize}

\item Leave the defining equation of the recursor unchanged but restrict $Y$ to being a base type.

\item Allow $Y$ to be an arbitrary type but introduce an explicit `control functional' into the defining equation.

\end{itemize}

The former is the approach taken by Berger in \cite{Berger(2004.0)} and works well in the setting of modified realizability. However, for the functional interpretation we need a recursor whose output type $Y$ can be arbitrary, and so we appeal to the second strategy which we will describe in detail in Section \ref{sec-OR-explicit}. However, to put our solution in context, first we will quickly sketch Berger's solution.

\subsection{The continuous functionals and Berger's open recursor}
\label{sec-OR-Berger}

In order to extend functional interpretations to subsystems of mathematical analysis, it is traditionally necessary to extend the usual interpreting calculus of functionals with a strong form of recursion, which is typically only satisfiable the \emph{continuous} models. This was originally the case with Spector's bar recursion, and also here with our variants of open recursion.

In this section we assume a basic knowledge of the type structures of partial and total continuous functionals, as a full presentation here is beyond the scope of our paper. Continuous type structures of functionals were formally constructed from the 1960s onwards: The total continuous functionals being conceived simultaneously by Kleene \cite{Kleene(1959.0)} and Kreisel \cite{Kreisel(1959.0)} and the partial model by Scott in \cite{Scott(1970.0)}. Variants of the latter play an important role in domain theory, where in particular they are used to give a denotational semantics to abstract functional programming languages such as PCF. For an up-to-date presentation of these things and much more in this direction, the reader is encouraged to consult \cite{LongNor(2015.0)}.

Very roughly, the continuous functionals $\modcont_{X\to Y}$ of type $X\to Y$ consist of functionals $F$ from $X$ to $Y$ which satisfy the property that
\begin{quote}in order to determine a finite amount of information about $F(x)$ one only needs a finite amount of information about $x$,\end{quote}
where the notion of finiteness is made precise by introducing a suitable topology for each type. Note that continuity is a strictly weaker property than being \emph{computable}: In particular \emph{any} function $f:\NN\to\NN$ is continuous by definition, since $f(n)$ only depends on a natural number $n$, and natural numbers are here considered to be finite pieces of information. On the other hand, not all functionals $F:\NN^\NN\to\NN$ are continuous, in fact the continuous functionals $\modcont_{\NN^\NN\to\NN}$ of type $2$ are precisely those such that for any $\alpha:\NN^\NN$ there exists some $N$ such that for all $\beta$, if $\initSeg{\alpha}{N}=\initSeg{\beta}{N}$ then $F(\alpha)=F(\beta)$. Both of the aforementioned properties can be generalised in the following way:
\begin{enumerate}[(i)]

\item The continuous functionals $\modcont_{\NN\to X}$ consist of \emph{all} sequences $\NN\to\modcont_X$, and so in particular the type structure of continuous functionals is a model of countable dependent choice.

\item Any $F\in\modcont_{X^\NN\to\NN}$ satisfies the following property:
\begin{equation*}\CONT \; \colon \; (\forall\alpha)(\exists N)(\forall\beta)(\initSeg{\alpha}{N}=_{X^\ast}\initSeg{\beta}{N}\to F(\alpha)=F(\beta)).\end{equation*}

\end{enumerate}
Note that for $X=\NN$ this property is equivalent to $F$ being continuous, whereas for $X$ a higher type, it is strictly weaker (since $F$ could depend on an infinite amount of information from $\alpha(0)$ but still satisfy $\CONT$, for example).

The \emph{partial} continuous functionals $\modpar$ are similar to the total continuous functionals described above, with the crucial difference that they allow functionals which are undefined in places, and so the $\modpar_X$ are represented by a \emph{domains} which in particular come equipped with a bottom element $\bot$ denoting an undefined value. The partial continuous functionals have the key property that every continuous functional $X\to X$ has a continuous fixed point, which means in particular that any recursively defined functional has a natural interpretation in $\modpar$ (although this need not be total). The partial continuous functionals are related to the total continuous functionals in that $\modcont$ is the extensional collapse of the total elements of $\modpar$ \cite{Ershov(1977.0)}. What this means in practice is that in order to show that a recursively defined functional has an interpretation in $\modcont$, it is enough to show that its interpretation in $\modpar$ as a fixpoint is total. 

\begin{theorem}[Berger \cite{Berger(2004.0)}]\label{res-OR} Let $\rhd$ be a primitive recursive relation on $X$ such that wellfounded recursion over $\rhd$ is definable in system $\systemT$. Then any fixpoint of the defining equation of $\orec_{(X,\rhd),\NN}$ is total, and hence  $\orec_{(X,\rhd),\NN}$ exists in the total continuous functionals $\modcont$. \end{theorem}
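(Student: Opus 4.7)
My plan is to prove totality by applying the set-theoretic (classical) open induction principle --- essentially the contrapositive of Proposition \ref{res-open} over $(\modcont_{X^\NN}, \rrhd)$ --- to the predicate asserting that the fixpoint is defined. Working in the Scott model, I would interpret $\orec^H$ as the least fixpoint $F \in \modpar_{X^\NN \to \NN}$ of its defining equation, which exists by standard domain theory since the right-hand side is Scott-continuous in the recursive argument. The goal is then to show that $F(u)$ is a numeral for every total $u \in \modcont_{X^\NN}$; combined with the fact that $\modcont$ is the extensional collapse of the total elements of $\modpar$, this places $\orec^H$ in $\modcont$.

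The argument then splits into two technical steps. First, define the totality predicate $U(u) :\equiv (F(u) \neq \bot)$ on total inputs and verify that it is open with respect to $\rrhd$, i.e.\ of the form $(\exists N)\, Q(\initSeg{u}{N})$ for a decidable $Q$. Here the assumption $Y = \NN$ is essential: a defined value of $F(u)$ is an actual numeral, produced at some finite approximation stage of the fixpoint in $\modpar$ during which $H$ inspects only a finite initial segment of $u$. The counterexample at the end of Section \ref{sec-OR-problem} illustrates why this localisation genuinely fails for higher output types, so this is where the base-type restriction earns its keep.

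Second, verify the inductive step: assuming $U(w)$ for every $w \llhd v$, I must show $U(v)$. Unfolding the defining equation gives
\begin{equation*}
F(v) \;=\; H v\bigl(\lambda n, \bar v.\, F(\initSeg{v}{n} \ast \bar v) \text{ if } \bar v_0 \lhd v_n\bigr).
\end{equation*}
The inner $\lambda$-term returns $0$ whenever $\bar v_0 \not\lhd v_n$, and when $\bar v_0 \lhd v_n$ it calls $F$ on $\initSeg{v}{n} \ast \bar v$, which is strictly below $v$ in $\llhd$ and hence defined by the inductive hypothesis. So the inner argument to $H$ is total, and totality of the continuous $H$ then forces $F(v) \in \NN$, i.e.\ $U(v)$.

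With these ingredients in hand, open induction over the chain-complete $(\modcont_{X^\NN}, \rrhd)$ --- available because $\lhd$ is wellfounded on $X$ and greatest lower bounds of lexicographic chains exist as in Section \ref{sec-wqo-zorn} --- immediately yields $U(u)$ for all total $u$. I expect the main obstacle to lie in the rigorous verification of openness of $U$: one has to exploit a concrete presentation of $\modpar$ (say via a neighbourhood or ideal model) to show that defined base-type values depend on only finite prefixes of their sequence argument, uniformly enough to package as a decidable $Q$ on finite sequences. Once that localisation lemma is pinned down, the inductive step is nearly immediate and the appeal to open induction is direct.
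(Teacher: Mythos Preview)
Your proposal is correct and essentially recovers Berger's original argument via open induction, which the paper explicitly acknowledges as the alternative: the author writes that ``in \cite[Proposition 5.1]{Berger(2004.0)} this is proven using a variant of open induction,'' but chooses instead the classically equivalent minimal-bad-sequence route to highlight the parallel with Nash-Williams' proof. So the two arguments are dual: you verify the open-induction premise $(\forall w\llhd v)\,U(w)\to U(v)$ directly and then invoke openness of $U$, whereas the paper assumes a non-total instance, builds a $\lhd$-minimal bad sequence $v$ by dependent choice, and shows that minimality forces the recursive-call sequence $\alpha$ to be total, after which $\CONT$ applied to the \emph{given total} $H$ at the total argument $\alpha$ yields the contradiction.

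The substantive technical difference lies exactly where you flag it. In the paper's version, continuity enters only through $\CONT$ for the total functional $H\in\modcont$, which is a clean off-the-shelf property. In your version, the work is front-loaded into verifying that the totality predicate $U(u)\equiv(F(u)\neq\bot)$ is open, which amounts to showing that a defined value of the \emph{partial} fixpoint $F$ depends only on a finite initial segment of its (total) sequence argument; this is true by Scott-continuity of $F$ and the structure of compact elements in $\modpar_{X^\NN}$, but it requires unpacking the domain model rather than quoting a property of $\modcont$. Both routes work; the paper's buys a slightly lighter appeal to the ambient model at the cost of the dependent-choice construction, while yours stays closer to the induction principle that the recursor is meant to realise.
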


\begin{proof}While in \cite[Proposition 5.1]{Berger(2004.0)} this is proven using a variant of open induction, we appeal to the classical minimal bad sequence construction, to emphasise already the deep connection with Nash-Williams' proof of Higman's lemma. Suppose for contradiction that there are total arguments $H$ and $u$ such that $\orec^H(u)$ is not total. Using dependent choice, which is valid for total objects of sequence type, construct the minimal bad sequence $v$ of total elements of type $X$ as follows:
\begin{quote}If $\seq{v_0,\ldots,v_{k-1}}$ has already been constructed, define $v_k$ to be a total element of $\modpar_X$ such that $\orec^H(\seq{v_0,\ldots,v_{k-1},v_k}\ast w)$ is not total for some total extension $w$, but $\orec^H(\seq{v_0,\ldots,v_{k-1},a}\ast w)$ is total for all total $w$ whenever $a\lhd v_k$.\end{quote} 
Now consider $\orec^H(v)=Hv(\lambda n,w\; . \; \orec^H(\initSeg{v}{n}\ast w)\mbox{ if $w_0\lhd v_n$})=H\alpha$ where 
\begin{equation*}\alpha_n:=\pair{v_n,\lambda w. \orec^H(\initSeg{v}{n}\ast w)\mbox{ if $w_0\lhd v_n$}},\end{equation*}
and note that we use a slight abuse of types here, informally identifying the type $X^\NN\to (\NN\to X^\NN\to \NN)\to\NN$ of $H$ with $(X\times (X^\NN\to\NN))^\NN\to\NN$. But by minimality of $v$, the sequence $\alpha_n$ is total, and hence $H\alpha$ is total and then by $\CONT$ applied to the total objects $H$ and $\alpha$ there exists some $N$ such that whenever $\initSeg{\alpha}{N}=\initSeg{\beta}{N}$ then $H\alpha=H\beta$. But now consider the sequence $\initSeg{v}{N}$. By construction there exists some $w$ such that $\orec^H(\initSeg{v}{N}\ast w)$ is not total. But $\orec^H(\initSeg{v}{N}\ast w)=H\beta$ for
\begin{equation*}\beta_n:=\pair{(\initSeg{v}{N}\ast w)_n,\lambda w'. \orec^H(\initSeg{\initSeg{v}{N}\ast w}{n}\ast w')\mbox{ if $w'_0\lhd (\initSeg{v}{N}\ast w)_n$} }\end{equation*}
and we have $\alpha_n=\beta_n$ for all $n<N$ and hence $H\beta=H\alpha$ which is total, a contradiction. Hence our original assumption was wrong and $\orec^H(u)$ must be total, and since $H$ and $u$ were arbitrary we have that $\orec$ is total. \end{proof}

We have given this proof in great detail as we want to compare it to the corresponding totality proof of our explicit open recursor given in the next section. We now conclude our overview of Berger's open recursion by stating the main result of \cite{Berger(2004.0)}, namely:
\begin{theorem}[Berger \cite{Berger(2004.0)}]There is a functional definable in $\systemT+(\orec_{(X,\rhd),\NN})$ such that $\Phi$ satisfies the modified realizability interpretation of the axiom of open induction $\OI$ for $\Sigma^0_1$-piecewise formulas, provably in $\PAomega+\CONT+\OI+(\orec_{(X,\rhd),\NN})$.\end{theorem}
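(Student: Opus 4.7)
The natural candidate is to take $\Phi := \lambda H, u.\orec_{(X,\rhd),\NN}^H(u)$, i.e., $\Phi$ is essentially the open recursor itself. Unpacking modified realizability for an open formula $U(u) \equiv (\exists n) P(\initSeg{u}{n})$: a realizer of $(\forall u) U(u)$ is a functional $\psi: X^\NN \to \NN$ with $P(\initSeg{u}{\psi(u)})$ for every $u$, while a realizer $H$ of the premise $(\forall v)((\forall w \llhd v) U(w) \to U(v))$ takes $v$ together with a function $g: \NN \to X^\NN \to \NN$ realizing $(\forall w \llhd v) U(w)$ (so $P(\initSeg{\initSeg{v}{m} \ast w'}{g(m, w')})$ holds whenever $w'_0 \lhd v_m$) and returns $n$ with $P(\initSeg{v}{n})$.

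The plan is to verify the realizability condition by applying $\OI$ itself in the metatheory to the predicate $V(u) :\equiv P(\initSeg{u}{\Phi H(u)})$, treating the realizer $H$ of the premise as fixed. The inductive step is direct from the defining equation of $\orec$: assuming $(\forall w \llhd v) V(w)$, the function $g := \lambda m, w'. \Phi H(\initSeg{v}{m} \ast w')$ (on the domain $w'_0 \lhd v_m$) is a valid realizer of $(\forall w \llhd v) U(w)$ by the inductive hypothesis, so $Hvg$ witnesses $U(v)$; but by the defining equation $\Phi H(v) = Hvg$, yielding $P(\initSeg{v}{\Phi H(v)})$, which is $V(v)$. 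Totality of $\Phi H$ on all arguments is inherited from Theorem \ref{res-OR}.

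The main obstacle, and the reason $\CONT$ appears in the verifying theory, is showing that $V(u)$ is really of the $\Sigma^0_1$-piecewise form $(\exists n) P'(\initSeg{u}{n})$ required by $\OI$, since nominally $V(u)$ refers to the higher-type functional $\Phi H$. Here one exploits the fact that $\Phi H: X^\NN \to \NN$ is a total continuous functional: by $\CONT$, for each $u$ there is some $N$ with $\Phi H(u') = \Phi H(u)$ whenever $\initSeg{u'}{N} = \initSeg{u}{N}$. Using this, one rewrites $V(u)$ as $(\exists k) Q(\initSeg{u}{k})$, where $Q(s)$ asserts the existence of some $n \leq k$ such that $\Phi H$ is constantly $n$ on all canonical extensions of $s$ and $P(\initSeg{s}{n})$ holds. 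Once $V$ is recognised as open in this sense, $\OI$ applies to give $(\forall u) V(u)$, which is precisely the realizability condition we need; the whole argument is formalisable in $\PAomega + \CONT + \OI + (\orec_{(X,\rhd),\NN})$.
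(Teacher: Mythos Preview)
The paper does not give its own proof of this theorem: it is merely \emph{stated} as ``the main result of \cite{Berger(2004.0)}'' and immediately followed by the remark that Theorem~\ref{res-ZL-main} will be its analogue for the functional interpretation. There is therefore no in-paper argument to compare your proposal against; you are supplying a proof where the author deliberately deferred to Berger.

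That said, your sketch is the right shape and matches Berger's approach: take $\Phi H:=\orec^H$ and verify $(\forall u)P(\initSeg{u}{\Phi H(u)})$ by applying $\OI$ in the metatheory to $V(u):\equiv P(\initSeg{u}{\Phi H(u)})$, with the inductive step read off directly from the defining equation of $\orec$. That part is clean.

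The point that needs more care is your argument that $V$ is open. Your $Q(s)$ contains the clause ``$\Phi H$ is constantly $n$ on all canonical extensions of $s$''. If ``all canonical extensions'' means all infinite extensions of $s$, then $Q$ quantifies universally over $X^\NN$ and is not a $\Sigma^0_1$ predicate on finite sequences, so $\OI$ does not apply. If instead it means the single zero-extension $s\ast 0^\omega$, then $Q$ is decidable, but the implication $(\exists k)\,Q(\initSeg{u}{k})\to V(u)$ breaks: knowing $P(\initSeg{u}{n})$ for $n=\Phi H(\initSeg{u}{k}\ast 0^\omega)$ does not yield $n=\Phi H(u)$ without precisely the constancy you have dropped. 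Getting a genuinely $\Sigma^0_1$ reformulation of $V$ that is provably equivalent under $\CONT$ is the actual work here, and your sketch does not yet do it; this is the step you should revisit if you want a complete proof.
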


Theorem \ref{res-ZL-main} below forms an analogue of this for the functional interpretation.

\subsection{The explicitly controlled open recursor}
\label{sec-OR-explicit}

Berger's variant of open recursion uses in an essential way the fact that total continuous functions of type $Z^\NN\to\NN$ only consider a finite initial segment of their input. In this way we avoid the problems encountered earlier in the chapter. However, as we will see, having open recursive functionals whose output type $Y$ is arbitrary is essential for the functional interpretation of $\ZL$, and Berger's variant is no longer total in this case as we cannot rely on continuity to `implicitly' control the recursion. Therefore we require some other way of ensuring that the recursor only depends on a finite initial segment of its input. We accomplish this by adding an additional parameter $F$ to the recursor which is responsible for `explicitly' controlling the recursion, in a sense that will be made clear below. We start with some definitions. 
\begin{definition}\label{defn-specs}Suppose that $\alpha:Z^\NN$ and $m:\NN$. Then the infinite sequence $\exts{\alpha}{m}:Z^\NN$ is defined by
\begin{equation*}\exts{\alpha}{m}:=\lambda n.\begin{cases}\alpha_n & \mbox{if $n<m$}\\ 0_Z & \mbox{otherwise}.\end{cases}\end{equation*}
Now suppose in addition that $F:Z^\NN\to\NN$. Then the infinite sequence $\specs{\alpha}{F}:Z^\NN$ is defined by
\begin{equation*}\specs{\alpha}{F}:=\lambda n.\begin{cases}0_Z & \mbox{if $(\exists m\leq n)(F(\exts{\alpha}{m})<m)$}\\ \alpha_n & \mbox{otherwise}.\end{cases}\end{equation*}
Note that both $\exts{\alpha}{m}$ and $\specs{\alpha}{F}$ are primitive recursively definable.\end{definition}
\begin{lemma}\label{res-specs}Given some $F:Z^\NN\to\NN$ and $\alpha:Z^\NN$, whenever there exists some $m:\NN$ such that $F(\exts{\alpha}{m})<m$ then
\begin{equation*}\specs{\alpha}{F}=\exts{\alpha}{m_0}\end{equation*}
where $m_0$ is the least such $m$. If no such $m$ exists then $\specs{\alpha}{F}=\alpha$.\end{lemma}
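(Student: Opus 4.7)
The proof will proceed by establishing the equality pointwise, that is, by fixing an arbitrary index $n:\NN$ and showing that both sides of each claimed equation agree at $n$. This reduces everything to a straightforward case analysis on whether the defining existential clause of $\specs{\alpha}{F}$ is triggered at $n$.

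For the first claim, I fix the existence of some $m$ with $F(\exts{\alpha}{m})<m$ and let $m_0$ be the least such. To show $\specs{\alpha}{F}(n)=\exts{\alpha}{m_0}(n)$ I split on whether $n<m_0$ or $n\geq m_0$. In the case $n<m_0$, minimality of $m_0$ ensures that no $m\leq n$ satisfies $F(\exts{\alpha}{m})<m$, so the first clause of $\specs{\alpha}{F}$ does not fire and $\specs{\alpha}{F}(n)=\alpha_n=\exts{\alpha}{m_0}(n)$. In the case $n\geq m_0$, the witness $m=m_0$ itself shows that the existential clause holds, so $\specs{\alpha}{F}(n)=0_Z=\exts{\alpha}{m_0}(n)$. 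This gives $\specs{\alpha}{F}=\exts{\alpha}{m_0}$.

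For the second claim, the hypothesis is that $F(\exts{\alpha}{m})\geq m$ for every $m$, so in particular the existential clause $(\exists m\leq n)(F(\exts{\alpha}{m})<m)$ fails for every $n$, giving $\specs{\alpha}{F}(n)=\alpha_n$ for all $n$, i.e. $\specs{\alpha}{F}=\alpha$.

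There is no genuine obstacle here: the whole statement is a direct unfolding of Definition \ref{defn-specs}, and the only content is the observation that the minimality of $m_0$ converts the existential guard in $\specs{\alpha}{F}$ into the threshold cut-off defining $\exts{\alpha}{m_0}$.
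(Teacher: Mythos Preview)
Your proof is correct and follows essentially the same approach as the paper: a pointwise case split on $n<m_0$ versus $n\geq m_0$, using minimality of $m_0$ to control the existential guard in the definition of $\specs{\alpha}{F}$. The paper's version is simply more terse and omits the explicit treatment of the second claim, which you handle correctly.
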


\begin{proof}This follows directly from the definition: For the first case, by minimality of $m_0$ we have $\specs{\alpha}{F}(n)=\alpha_n$ for all $n<m_0$, and $\specs{\alpha}{F}(n)=0_Z$ otherwise, which is exactly the definition of $\exts{\alpha}{m_0}$. \end{proof}

\begin{lemma}\label{res-spec}For any functional $F:Z^\NN\to\NN$ satisfying $\CONT$, then for each $\alpha:Z^\NN$ there exists some $m$ such that $F(\exts{\alpha}{m})<m$.\end{lemma}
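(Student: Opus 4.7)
The plan is a direct application of the continuity axiom $\CONT$ to the functional $F$ at the point $\alpha$, followed by a simple numerical choice of $m$.

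First I would invoke $\CONT$ on the total object $F$ and sequence $\alpha$ to obtain a modulus $N$ such that for every $\beta : Z^\NN$ with $\initSeg{\beta}{N} =_{Z^\ast} \initSeg{\alpha}{N}$ we have $F(\beta) = F(\alpha)$. Next I would set $m := \max\{N, F(\alpha) + 1\}$ and claim this witnesses the existential. The key observation is that since $m \geq N$, the definition of $\exts{\alpha}{m}$ ensures $(\exts{\alpha}{m})_n = \alpha_n$ for every $n < N$, so $\initSeg{\exts{\alpha}{m}}{N} = \initSeg{\alpha}{N}$. Applying the continuity modulus with $\beta := \exts{\alpha}{m}$ then gives $F(\exts{\alpha}{m}) = F(\alpha)$, and since $m \geq F(\alpha) + 1$ we conclude $F(\exts{\alpha}{m}) = F(\alpha) < m$ as required.

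There is essentially no obstacle here: the statement is exactly a packaging of continuity together with the obvious remark that a fixed natural number is eventually dominated by $m$. The only thing worth being careful about is that $\exts{\alpha}{m}$ agrees with $\alpha$ on the whole initial segment of length $N$ (not just length $m$), which follows immediately from $m \geq N$ and the definition of $\exts{\alpha}{m}$. Note that this lemma is exactly what will allow the construction $\specs{\alpha}{F}$ from Definition \ref{defn-specs} to collapse, via Lemma \ref{res-specs}, to a finite truncation $\exts{\alpha}{m_0}$ of $\alpha$ whenever the control functional $F$ is continuous, which is the essential property that will make the explicitly controlled open recursor well-defined in the continuous functionals.
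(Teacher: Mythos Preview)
Your proof is correct and essentially identical to the paper's: both invoke $\CONT$ to obtain a modulus $N$, set $m:=\max\{N,F(\alpha)+1\}$, and use $m\geq N$ to conclude $F(\exts{\alpha}{m})=F(\alpha)<m$.
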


\begin{proof}Suppose that $N$ is the point of continuity of $F$ which exists by $\CONT$, and define $m:=\max\{N,F\alpha+1\}$. Then $\initSeg{\exts{\alpha}{m}}{N}=\initSeg{\alpha}{m}$ since $N\leq m$, and therefore $F(\exts{\alpha}{m})=F\alpha<m$. \end{proof}
\begin{theorem}\label{res-spec-prop}Given $F:Z^\NN\to\NN$ and $\alpha:Z^\NN$, the following facts are provable assuming $\CONT$:
\begin{enumerate}[(i)]

\item\label{item-spec-propi} $\specs{\alpha}{F}=\exts{\alpha}{m_0}$ where $m_0$ satisfies $F(\exts{\alpha}{m_0})<m_0$ and is the least such number;

\item\label{item-spec-propii} for any $\beta$ satisfying $\initSeg{\alpha}{m_0}=\initSeg{\beta}{m_0}$ we have $\specs{\alpha}{F}=\specs{\beta}{F}$;

\item\label{item-spec-propiii} $\specs{\specs{\alpha}{F}}{F}=\specs{\alpha}{F}$.

\end{enumerate}\end{theorem}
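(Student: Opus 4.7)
The plan is to handle the three parts in order, using (i) as the main lemma from which (ii) and (iii) follow essentially by bookkeeping.

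For (i), I would first invoke Lemma \ref{res-spec} (which uses $\CONT$) to guarantee that the set $\{m:F(\exts{\alpha}{m})<m\}$ is non-empty, so by least-number search a minimal such $m_0$ exists. Then Lemma \ref{res-specs} gives directly that $\specs{\alpha}{F}=\exts{\alpha}{m_0}$.

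For (ii), suppose $\initSeg{\alpha}{m_0}=\initSeg{\beta}{m_0}$. The key observation is that for every $m\leq m_0$, the truncated sequences $\exts{\alpha}{m}$ and $\exts{\beta}{m}$ only depend on the first $m$ entries of $\alpha$ and $\beta$, and hence agree. Consequently $F(\exts{\alpha}{m})=F(\exts{\beta}{m})$ for all $m\leq m_0$. By minimality of $m_0$ on the $\alpha$ side we have $F(\exts{\alpha}{m})\geq m$ for all $m<m_0$, and the same inequality therefore holds on the $\beta$ side; and since $F(\exts{\beta}{m_0})=F(\exts{\alpha}{m_0})<m_0$, the number $m_0$ is also the least witness for $\beta$. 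Applying (i) to $\beta$, we get $\specs{\beta}{F}=\exts{\beta}{m_0}=\exts{\alpha}{m_0}=\specs{\alpha}{F}$.

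For (iii), I would simply observe that by (i), $\specs{\alpha}{F}=\exts{\alpha}{m_0}$ agrees with $\alpha$ on its first $m_0$ entries, so we can instantiate (ii) with $\beta:=\specs{\alpha}{F}$ to conclude $\specs{\specs{\alpha}{F}}{F}=\specs{\alpha}{F}$.

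There is no serious obstacle here: the statement is essentially unwinding the definitions of $\exts{\cdot}{\cdot}$ and $\specs{\cdot}{\cdot}$ once continuity of $F$ has been used in Lemma \ref{res-spec} to secure the existence of $m_0$. The only point requiring a little care is the argument in (ii) that the minimality of $m_0$ transfers from $\alpha$ to $\beta$, but this is immediate from the equality $F(\exts{\alpha}{m})=F(\exts{\beta}{m})$ for $m\leq m_0$.
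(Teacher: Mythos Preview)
Your proposal is correct and follows essentially the same approach as the paper: part (i) combines Lemmas \ref{res-spec} and \ref{res-specs}, part (ii) uses the agreement $\exts{\alpha}{m}=\exts{\beta}{m}$ for $m\leq m_0$ to transfer minimality of $m_0$ to $\beta$ and then reapplies (i), and part (iii) instantiates (ii) with $\beta:=\specs{\alpha}{F}=\exts{\alpha}{m_0}$. If anything, your write-up is slightly more explicit than the paper's about why $m_0$ is also minimal for $\beta$.
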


\begin{proof}Part (\ref{item-spec-propi}) follows directly from Lemma \ref{res-specs} together with Lemma \ref{res-spec}. For part (\ref{item-spec-propii}), we observe that for all $n\leq m_0$ we have $\exts{\alpha}{n}=\exts{\beta}{n}$, from which it follows that the first $m_1$ satisfying $F(\exts{\beta}{m_1})<m_1$ is just $m_0$. Therefore by part (\ref{item-spec-propi}) again we have $\specs{\beta}{F}=\exts{\beta}{m_0}=\exts{\alpha}{m_0}=\specs{\alpha}{F}$. Part (\ref{item-spec-propiii}) now follows easily, since by part (\ref{item-spec-propi}) we have $\specs{\specs{\alpha}{F}}{F}=\specs{\exts{\alpha}{m_0}}{F}$, and since $\initSeg{\exts{\alpha}{m_0}}{m_0}=\initSeg{\alpha}{m_0}$ then $\specs{\exts{\alpha}{m_0}}{F}=\specs{\alpha}{F}$ by part (\ref{item-spec-propii}).\end{proof}

Now we are ready to define our `explicit' recursor. Given $H:(X\times (X^\NN\to Y))^\NN\to Y$ and $F:(X\times (X^\NN\to Y))^\NN\to\NN$, we define
\begin{equation*}\eorec^{H,F}_{(X,\rhd),Y}(u)=_Y H(\specs{\alpha}{F})\mbox{ \ \ \ for \ \ \ } \alpha:=_{(X\times (X^\NN\to Y))^\NN}\lambda n\; . \; \pair{u_n,\lambda v\; . \; \eorec^{H,F}(\initSeg{u}{n}\ast v)\mbox{ if $v_0\lhd u_n$}}.\end{equation*}
This is a form of lexicographic recursion just as before, but with the crucial difference that the recursor now comes equipped with some additional functional $F:Z^\NN\to\NN$ which determines how much of the sequence $\alpha$ is `relevant'. As soon as we have found some $m$ satisfying the condition $F(\exts{\alpha}{m})<m$ then we declare that we are not interested in $\alpha_n$ for $n\geq m$.
Our introduction of this `control' functional $F$ allows us to provide an analogue of $\CONT$ for $H$, even though the output type of $H$ is arbitrary.
\begin{lemma}\label{res-speccont}Suppose that $H:Z^\NN\to Y$ and that $F:Z^\NN\to\NN$ satisfies $\CONT$. Then $H$ satisfies the following property:
\begin{equation*}\CONT^\ast \ \colon \  (\forall\alpha)(\exists N)(\forall\beta)(\initSeg{\alpha}{N}=\initSeg{\beta}{N}\to H(\specs{\alpha}{F})=_Y H(\specs{\beta}{F})).\end{equation*}\end{lemma}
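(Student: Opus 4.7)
The plan is to leverage Theorem \ref{res-spec-prop} directly: the property $\CONT^\ast$ for $H$ does not require continuity of $H$ itself, but only that $\specs{\alpha}{F}$ is determined by a finite initial segment of $\alpha$, which is exactly what the preceding lemmas about $\specs{-}{F}$ give us, using only continuity of $F$.

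More precisely, fix $\alpha : Z^\NN$. First I would invoke Lemma \ref{res-spec} (applicable since $F$ satisfies $\CONT$) to obtain some $m_0 : \NN$ such that $F(\exts{\alpha}{m_0}) < m_0$; without loss of generality take $m_0$ minimal with this property. I would then propose $N := m_0$. For any $\beta$ with $\initSeg{\alpha}{N} = \initSeg{\beta}{N}$, Theorem \ref{res-spec-prop}(\ref{item-spec-propii}) immediately yields $\specs{\alpha}{F} = \specs{\beta}{F}$, and applying $H$ to both sides gives $H(\specs{\alpha}{F}) = H(\specs{\beta}{F})$, which is exactly what $\CONT^\ast$ demands.

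There is no genuine obstacle here; the whole point of introducing the control functional $F$ and the operation $\specs{-}{F}$ in Definition \ref{defn-specs} was precisely to force this kind of finite-initial-segment dependence even when the output type $Y$ is arbitrary and $H$ itself may be wildly discontinuous. The mild subtlety worth flagging is that one should not attempt to apply $\CONT$ to $H$ directly (it need not hold, since $Y$ is not restricted to $\NN$); the continuity has been pushed entirely into $F$, and the role of $\specs{-}{F}$ is to translate that continuity into a statement about $H \circ \specs{-}{F}$.
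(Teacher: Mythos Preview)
Your proposal is correct and follows essentially the same approach as the paper: fix $\alpha$, take $N:=m_0$ to be the least $m$ with $F(\exts{\alpha}{m})<m$ (which exists by $\CONT$), and apply Theorem \ref{res-spec-prop}(\ref{item-spec-propii}) to conclude $\specs{\alpha}{F}=\specs{\beta}{F}$ whenever $\initSeg{\alpha}{m_0}=\initSeg{\beta}{m_0}$, then apply $H$. Your additional remark that no continuity of $H$ is needed is exactly the point the paper is making with this lemma.
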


\begin{proof}Let $m_0$ be the least number satisfying $F(\exts{\alpha}{m_0})<m_0$, which exists by $\CONT$, and define $N:=m_0$. Then if $\initSeg{\alpha}{m_0}=\initSeg{\beta}{m_0}$ then $\specs{\alpha}{F}=\specs{\beta}{F}$ by part (\ref{item-spec-propii}) above, and therefore $H(\specs{\alpha}{F})=H(\specs{\beta}{F})$.\end{proof}
We can use this result to show, analogously to Theorem \ref{res-OR}, that $\eorec_{(X,\rhd),Y}$ exists in the total continuous functionals for any $Y$. 
\begin{theorem}\label{res-EOR}Let $\rhd$ be a primitive recursive relation on $X$ such that wellfounded recursion over $\rhd$ is definable in system $\systemT$. Then the fixpoint of the defining equation of $\eorec_{(X,\rhd),Y}$ is total, and hence $\eorec_{(X,\rhd),Y}$ exists in the total continuous functionals $\modcont$.\end{theorem}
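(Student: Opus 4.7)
The plan is to mirror the proof of Theorem \ref{res-OR}, with the role previously played by the implicit continuity of a type-$2$ functional now played by the explicit control functional $F$ via Lemma \ref{res-speccont}. I would suppose for contradiction that there exist total arguments $H$, $F$ and $u$ such that $\eorec^{H,F}(u)$ fails to be total in $\modpar$. Invoking dependent choice for total sequences, I would construct a minimal bad sequence $v$ exactly as in Berger's proof: having built $v_0,\dots,v_{k-1}$, pick a total $v_k:\modpar_X$ such that $\eorec^{H,F}(\seq{v_0,\dots,v_{k-1},v_k}\ast w)$ is non-total for some total $w$, while $\eorec^{H,F}(\seq{v_0,\dots,v_{k-1},a}\ast w)$ is total for every total $w$ whenever $a\lhd v_k$.

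Unfolding the defining equation yields $\eorec^{H,F}(v)=H(\specs{\alpha}{F})$, where
\begin{equation*}
\alpha_n:=\pair{v_n,\;\lambda w.\;\eorec^{H,F}(\initSeg{v}{n}\ast w)\mbox{ if }w_0\lhd v_n}.
\end{equation*}
The minimality clause at stage $n$ guarantees that for any total $w$ with $w_0\lhd v_n$ the value $\eorec^{H,F}(\initSeg{v}{n}\ast w)$ is total, so $\alpha$ itself is a total sequence, and hence $H(\specs{\alpha}{F})$ is a total element of $\modpar_Y$.

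The decisive step is now to invoke Lemma \ref{res-speccont} in place of the ordinary continuity of $H$ used in Berger's proof. Since $F$ is total in $\modcont$ it satisfies $\CONT$, so $H$ satisfies $\CONT^\ast$ and supplies a modulus $N$ (depending on $\alpha$) such that whenever $\initSeg{\alpha}{N}=\initSeg{\beta}{N}$ then $H(\specs{\alpha}{F})=H(\specs{\beta}{F})$. By the clause in the construction of $v_{N-1}$, there is some total $w$ for which $\eorec^{H,F}(\initSeg{v}{N}\ast w)$ is not total; unfolding this call displays it as $H(\specs{\beta}{F})$ for a $\beta$ agreeing with $\alpha$ on the first $N$ coordinates. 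Hence $H(\specs{\beta}{F})=H(\specs{\alpha}{F})$, which is total --- the required contradiction. Totality of $\eorec^{H,F}(u)$ on all total arguments then transfers to the existence of $\eorec_{(X,\rhd),Y}$ in $\modcont$ via the standard characterisation of $\modcont$ as the extensional collapse of the total elements of $\modpar$.

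The one genuinely new ingredient relative to Berger is the need to control $H$ through the auxiliary $F$ rather than through continuity of $H$ itself, and the main obstacle will be aligning the two uses of the modulus: the $N$ produced by $\CONT^\ast$ applied to $\alpha$ must be the same index at which the minimality clause yields a failing total $w$. Fortunately this matches up directly because, by Theorem \ref{res-spec-prop}(\ref{item-spec-propii}), the map $\alpha\mapsto\specs{\alpha}{F}$ already factors through the first $m_0$ coordinates of $\alpha$, where $m_0$ is the least integer with $F(\exts{\alpha}{m_0})<m_0$; so one is free to take $N:=m_0$ and invoke the construction at that stage. The arbitrary output type $Y$ then introduces no further difficulty, since the only place where $Y=\NN$ was used in Berger's argument was to secure continuity of $H$, and this role is now played uniformly by $F$.
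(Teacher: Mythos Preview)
Your proposal is correct and follows essentially the same argument as the paper: a minimal bad sequence construction in $\modpar$ produces a total $\alpha$, then $\CONT^\ast$ from Lemma \ref{res-speccont} (using that the total $F$ satisfies $\CONT$) yields a modulus $N$ at which the defining equation forces $H(\specs{\beta}{F})=H(\specs{\alpha}{F})$ for the non-total instance, giving the contradiction. Your additional remark identifying $N$ with the least $m_0$ satisfying $F(\exts{\alpha}{m_0})<m_0$ is exactly the content of the proof of Lemma \ref{res-speccont}, so it is consistent with (and slightly more explicit than) the paper's own account.
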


\begin{proof}This follows analogously to the proof of Theorem \ref{res-OR}. Suppose for contradiction that there are total arguments $H,F$ and $u$ such that $\eorec^{H,F}(u)$ is not total. Using dependent choice, construct a minimal bad sequence $v$ as follows:
\begin{quote}If $\seq{v_0,\ldots,v_{k-1}}$ has already been constructed, define $v_k$ to be a total element of $\modpar_X$ such that $\eorec^{H,F}(\seq{v_0,\ldots,v_{k-1},v_k}\ast w)$ is not total for some total extension $w$, but $\eorec^{H,F}(\seq{v_0,\ldots,v_{k-1},a}\ast w)$ is total for all total $w$ whenever $a\lhd v_k$.\end{quote}
Now consider $\eorec^{H,F}(v)=H(\specs{\alpha}{F})$ for 
\begin{equation*}\alpha:=\lambda n\; . \; \pair{v_n,\lambda w\; . \; \eorec^{H,F}(\initSeg{v}{n}\ast w)\mbox{ if $w_0\lhd v_n$}}.\end{equation*}
Then by construction of $v$, $\alpha$ and hence $H(\specs{\alpha}{F})$ must be total, and by $\CONT^\ast$ applied to the total objects $\alpha$, $F$ and $H$ hence there exists some $N$ such that for any total $\beta:(X\times (X^\NN\to Y))^\NN$, if $\initSeg{\alpha}{N}= \initSeg{\beta}{N}$ then $H(\specs{\alpha}{F})=H(\specs{\beta}{F})$. Now consider the sequence $\initSeg{v}{N}$. By construction there exists some $w$ such that $\eorec^{H,F}(\initSeg{v}{N}\ast w)$ is not total. But $\eorec^{H,F}(\initSeg{v}{N}\ast w)=H(\specs{\beta}{F})$ where
\begin{equation*}\beta:=\lambda n.\pair{(\initSeg{v}{N}\ast w)_n,\lambda w'\; . \; \eorec^{H,F}(\initSeg{\initSeg{v}{N}\ast w}{n}\ast w')\mbox{ if $w'_0\lhd (\initSeg{v}{N}\ast w)_n$}}\end{equation*}
and we have $\beta(n)=\alpha(n)$ for all $n<N$ and hence by $\CONT^\ast$ we have $\eorec^{H,F}(\initSeg{v}{N}\ast w)=H(\specs{\beta}{F})=H(\specs{\alpha}{F})$ which is total, a contradiction. Therefore $\eorec^{H,F}(u)$ must be total, and since $H,F$ and $u$ were arbitrary total objects then $\eorec$ is total.\end{proof}

\section{The functional interpretation of $\ZL$ - Part 2}
\label{sec-ZLb}

We will now make formal the intuitive idea presented in Section \ref{sec-ZLa}. We begin by setting up an analogue of Lemma \ref{res-learn}, but this time the objects $x$ of our learning procedures are sequences $u:X^\NN$ (and so $P_0(u)$ and $C_0(u)$ are decidable predicates over $X^\NN$) and $u_{i+1}$ is defined as $\xi(\specs{u_i}{\phi})$ for some $\phi:X^\NN\to\NN$. As a result, we end up with a sequence of the form
\begin{equation*}u_0\mapsto\specs{u_0}{\phi}\rrhd u_1\mapsto\specs{u_1}{\phi}\rrhd u_2\mapsto\ldots\end{equation*}
and so in order to guarantee that $P_0(u_i)$ holds for all $i$ we will require an additional condition, namely that the property $P_0$ is preserved under the map $\specs{\cdot}{\phi}:X^\NN\to X^\NN$. We will now just state and prove the result, but the reader is strongly encouraged to simultaneously refer back to the much simpler Lemma \ref{res-learn} and its proof, not only so that it is easier to grasp what is going on here, but because the differences in the formulation of the two lemmas are extremely informative. 

\begin{remark}\label{rem-zero}For the remainder of the paper, we request that the canonical element $0_X$ of type $X$ is minimal with respect to $\rhd$. This condition is not essential and could be circumvented by other means, but it makes what follows a little easier and allows us to avoid some additional syntax. In practice this assumption is completely benign, and in particular in Chapter \ref{sec-higman} where our type $X$ will actually be a type $X^\ast$ of finite words and $\lhd$ will denote the prefix relation, then the normal choice of $0_{X^\ast}=\seq{}$ is also minimal. \end{remark}

\begin{lemma}\label{res-learnrec}Suppose that $\xi:X^\NN\to X^\NN$ is defined by
\begin{equation*}\xi(u):=\initSeg{u}{\xi_0(u)}\ast\xi_1(u)\end{equation*}
where $\xi_0:X^\NN\to\NN$ and $\xi_1:X^\NN\to X^\NN$, and that $\xi$ satisfies
\begin{equation}\label{eqn-learnrec-i}(\forall u)(\neg C_0(u)\to u_{\xi_0(u)}\rhd \xi_1(u)_0\wedge P_0(\xi(u))).\end{equation}
Moreover, suppose that $\phi:X^\NN\to\NN$ is an additional functional which satisfies
\begin{equation}\label{eqn-learnrec-ii}(\forall u)(P_0(u)\to P_0(\specs{u}{\phi})).\end{equation}
For any $u:X^\NN$, the controlled learning procedure $\lrnc{\phi}{\xi}{C_0}[u]$ starting at $u$ denotes the sequence $(u_i)_{i\in\NN}$ given by
\begin{equation*}u_0:=u \mbox{ \ \ \ and \ \ \ }u_{i+1}:=\begin{cases}\specs{u_i}{\phi} & \mbox{if $C_0(\specs{u_i}{\phi})$}\\ \xi(\specs{u_i}{\phi}) & \mbox{otherwise}.\end{cases}\end{equation*}
Then provably from $\CONT$, firstly there always exists some $k$ such that $C_0(\specs{u_k}{\phi})$ holds, and we call $\specs{u_k}{\phi}$ for the minimal such $u_k$ the limit of $\lrnc{\phi}{\xi}{C_0}[u]$, which we denote by
\begin{equation*}\limc{\phi}{\xi}{C_0}{u},\end{equation*} 
secondly the functional $\lambda u.\limc{\phi}{\xi}{C_0}{u}$ is definable in $\systemT+(\eorec_{(X,\rhd)})$, and finally we have
\begin{equation}\label{eqn-learnrec-iii}(\forall u)(P_0(u)\to P_0(\limc{\phi}{\xi}{C_0}{u})\wedge C_0(\limc{\phi}{\xi}{C_0}{u})).\end{equation}
\end{lemma}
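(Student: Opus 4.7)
My plan is to realize $\lambda u.\limc{\phi}{\xi}{C_0}{u}$ as an explicit instance of $\eorec_{(X,\rhd),X^\NN}$, so that definability in $\systemT+(\eorec)$ comes for free and termination follows from Theorem~\ref{res-EOR}. Setting $Z := X\times (X^\NN\to X^\NN)$, I would define $F : Z^\NN \to \NN$ and $H : Z^\NN\to X^\NN$ by $F(\alpha) := \phi(\lambda n.\pi_0(\alpha_n))$ and
\begin{equation*}
H(\alpha) := \begin{cases} \lambda n.\pi_0(\alpha_n) & \mbox{if $C_0(\lambda n.\pi_0(\alpha_n))$} \\ \pi_1(\alpha_{\xi_0(u')})(\xi_1(u')) & \mbox{otherwise,} \end{cases}
\end{equation*}
where $u' := \lambda n.\pi_0(\alpha_n)$ in the second clause. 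I then set $L(u) := \eorec^{H,F}(u)$ as my candidate for the limit functional.

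Next I would verify that $L$ really implements the intended recursion. Unfolding the defining equation gives $L(u) = H(\specs{\alpha}{F})$ with $\alpha_n = \pair{u_n,\lambda v.L(\initSeg{u}{n}\ast v)\mbox{ if }v_0\lhd u_n}$. Because $F$ only inspects first components we have $F(\exts{\alpha}{m}) = \phi(\exts{u}{m})$, so Theorem~\ref{res-spec-prop}(\ref{item-spec-propi}) yields the same minimal cutoff $m_0$ for both $\specs{\alpha}{F}$ and $\specs{u}{\phi}$, and in particular $\lambda n.\pi_0((\specs{\alpha}{F})_n) = \specs{u}{\phi}$. If $C_0(\specs{u}{\phi})$ then immediately $L(u) = \specs{u}{\phi}$; otherwise, putting $n := \xi_0(\specs{u}{\phi})$ and $w := \xi_1(\specs{u}{\phi})$, hypothesis (\ref{eqn-learnrec-i}) gives $w_0 \lhd (\specs{u}{\phi})_n$, and Remark~\ref{rem-zero} now forces $n < m_0$, since for $n\geq m_0$ we would have $(\specs{u}{\phi})_n = 0_X$ and nothing is $\lhd 0_X$. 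Consequently $(\specs{\alpha}{F})_n = \alpha_n$ and $u_n = (\specs{u}{\phi})_n$, which lets me compute $L(u) = \pi_1(\alpha_n)(w) = L(\initSeg{u}{n}\ast w) = L(\xi(\specs{u}{\phi}))$, matching exactly the desired step of the learning procedure.

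Termination then follows from Theorem~\ref{res-EOR}: totality of $L$ in the continuous model means that iterating the above unfolding produces the sequence $(u_i)$ from the statement and must reach some $k$ with $C_0(\specs{u_k}{\phi})$, since otherwise we would have an unending chain of unresolved recursive calls contradicting totality; at the least such $k$ we obtain $L(u) = \specs{u_k}{\phi} = \limc{\phi}{\xi}{C_0}{u}$. Property (\ref{eqn-learnrec-iii}) is then a routine induction on $i\leq k$: from $P_0(u_0) = P_0(u)$, hypothesis (\ref{eqn-learnrec-ii}) gives $P_0(\specs{u_i}{\phi})$, and for $i<k$ hypothesis (\ref{eqn-learnrec-i}) gives $P_0(\xi(\specs{u_i}{\phi})) = P_0(u_{i+1})$, while at $i=k$ we obtain both $P_0(\specs{u_k}{\phi})$ and $C_0(\specs{u_k}{\phi})$. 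I expect the main obstacle to be the verification that $\xi_0(\specs{u}{\phi}) < m_0$: this is what keeps the live recursive-call function $\pi_1(\alpha_n)$ visible inside $\specs{\alpha}{F}$, and without Remark~\ref{rem-zero} the truncation at $m_0$ would replace $\alpha_n$ by $0_Z$ and collapse $\pi_1$ to $\lambda v.0_{X^\NN}$, preventing the intended recursive descent.
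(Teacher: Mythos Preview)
Your proposal is correct and follows essentially the same route as the paper: define the limit via $\eorec$ with $F$ depending only on first components so that $\specs{\alpha}{F}$ tracks $\specs{u}{\phi}$, use Remark~\ref{rem-zero} to force $\xi_0(\specs{u}{\phi})<m_0$, and finish with the same induction for (\ref{eqn-learnrec-iii}). The only difference is that the paper takes output type $(X^\NN)^\ast$ and records the whole trace, which makes termination a direct induction on the length of the returned finite sequence, whereas you take output type $X^\NN$ and appeal to totality to rule out an infinite chain of recursive calls; both are valid, and the paper's choice just makes the termination step a touch more elementary.
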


\begin{proof}We first formally construct the limit functional, which the reader can skip if they like since this is nothing more that a somewhat intricate unwinding of definitions. First, we define $L^\phi_{\xi,C_0}:X^\NN\to (X^\NN)^\ast$ by $L^\phi_{\xi,C_0}(u):=\eorec^{H,F}_{(X,\rhd),(X^\NN)^\ast}(u)$ where
\begin{equation*}\begin{aligned}F\alpha&:= \phi(\alpha_0)\\ 
H\alpha&:=\begin{cases}\seq{\alpha_0} & \mbox{if $C_0(\alpha_0)$}\\ \seq{\alpha_0}\ast \alpha_1\xi_0(\alpha_0)\xi_1(\alpha_0) & \mbox{otherwise}.\end{cases}\end{aligned}\end{equation*}
Here we denote by $\alpha_0$ the sequence $\lambda n.\pi_0\alpha(n)$ and similarly for $\alpha_1$. Then unwinding the definition, we have $L^\phi_{\xi,C_0}(u)=H(\specs{\alpha}{F})$ for $\alpha=\lambda n.\pair{u_n,\lambda v.L^\phi_{\xi,C_0}(\initSeg{u}{n}\ast v)\mbox{ if $v_0\lhd u_n$}}$. But since $F\alpha$ depends only on the first component $\alpha_0=u$, we have (by Lemma \ref{res-spec-prop}) $\specs{\alpha}{F}=\exts{\alpha}{m_0}$ where $m_0$ is the least number satisfying $F(\exts{\alpha}{m_0})=\phi(\exts{u}{m_0})<m_0$. This means that $\specs{u}{\phi}=\exts{u}{m_0}$ and so $(\specs{\alpha}{F})_0=(\exts{\alpha}{m_0})_0=\exts{u}{m_0}=\specs{u}{\phi}$ and $(\specs{\alpha}{F})_1=\lambda n<m_0,v.L^\phi_{\xi,C_0}(\initSeg{u}{n}\ast v)\mbox{ if $v_0\lhd u_n$}$, and so
\begin{equation*}H(\specs{\alpha}{F})=\begin{cases}\seq{\specs{u}{\phi}} & \mbox{if $C_0(\specs{u}{\phi})$}\\ \seq{\specs{u}{\phi}}\ast (\specs{\alpha}{F})_1\xi_0(\specs{u}{\phi})\xi_1(\specs{u}{\phi}) & \mbox{otherwise}.\end{cases}\end{equation*}
But now by (\ref{eqn-learnrec-i}), if $\neg C_0(\specs{u}{\phi})$ then $(\specs{u}{\phi})_{\xi_0(\specs{u}{\phi})}\rhd \xi_1(\specs{u}{\phi})_0$, and since $0_X$ was chosen to be minimal with respect to $\rhd$ (cf. Remark \ref{rem-zero}) this can only mean that $\xi_0(\specs{u}{\phi})<m_0$ (else $(\specs{u}{\phi})_{\xi_0(\specs{u}{\phi})}=0_X$) and therefore $(\specs{u}{\phi})_{\xi_0(\specs{u}{\phi})}=u_{\xi_0(\specs{u}{\phi})}$. Substituting all this information into the $(\specs{\alpha}{F})_1$ we have
\begin{equation*}\begin{aligned}(\specs{\alpha}{F})_1\xi_0(\specs{u}{\phi})\xi_1(\specs{u}{\phi})=L^\phi_{\xi,C_0}(\initSeg{\specs{u}{\phi}}{\xi_0(\specs{u}{\phi})}\ast \xi_1(\specs{u}{\phi}))=L^\phi_{\xi,C_0}(\xi(\specs{u}{\phi})).\end{aligned}\end{equation*}
So to summarise, the functional $L_{\xi,C_0}^\phi$ satisfies (repressing subscripts)
\begin{equation*}L_{\xi,C_0}^\phi(u)=\begin{cases}\seq{\specs{u}{\phi}} & \mbox{if $C_0(\specs{u}{\phi})$}\\ \seq{\specs{u}{\phi}}\ast L_{\xi,C_0}^\phi(\xi(\specs{u}{\phi})) & \mbox{otherwise},\end{cases}\end{equation*}
and so by induction on the length of $L_{\xi,C_0}^\phi(u)$ one establishes that $\limc{\phi}{\xi}{C_0}{u}$ exists and is the last element of $L_{\xi,C_0}^\phi(u)$. 

To verify (\ref{eqn-learnrec-iii}) is similar to the proof of Lemma \ref{res-learn}. We first show by induction that if $P_0(u)$ holds then $P_0(\specs{u_i}{\phi})$ holds for all $i\in\NN$. For $i=0$ this follows by (\ref{eqn-learnrec-ii}) applied to $u=u_0$, and otherwise if $P_0(\specs{u_i}{\phi})$ is true then either $\specs{u_{i+1}}{\phi}=\specs{\specs{u_i}{\phi}}{\phi}=\specs{u_i}{\phi}$ by Lemma \ref{res-spec-prop} or $\neg C(\specs{u_i}{\phi})$ and then by (\ref{eqn-learnrec-i}) we have $P_0(u_{i+1})$ and hence $P_0(\specs{u_{i+1}}{\phi})$ by (\ref{eqn-learnrec-ii}). Therefore, if $\specs{u_k}{\phi}$ is the limit of the learning procedure, then $P_0(\specs{u_k}{\phi})\wedge C_0(\specs{u_k}{\phi})$ holds, and we're done. \end{proof}

Our final step is now to produce a realizer for the functional interpretation of $\ZL$. Let's briefly recall from Section \ref{sec-ZLa} what this means: We are given as input a sequence $\bar u$ (we use this new notation as we want $u$ to denote a separate variable below), a pair of functionals $M,N:X^\NN\to (\NN\to X^\NN\to \NN)\to \NN$, together with $W:X^\NN\to (\NN\to X^\NN\to \NN)\to X^\NN$, and we must produce some $n:\NN$, $v: X^\NN$ and $\gamma:(X^\NN\to\NN)^\NN$ satisfying
\begin{equation}\label{eqn-zorn-functional}\bar P(\bar u,n)\to \bar P(v,Nv\gamma)\wedge C(v,\gamma,Mv\gamma,Wv\gamma),\end{equation}
where as before $C(v,\gamma,m,w):\equiv w_0\lhd v_m\to \neg \bar P(\initSeg{v}{m}\ast w,\gamma mw)$. We first need some definitions. Define the functional $\Psi^N:X^\NN\to\NN$ by
\begin{equation*}\Psi^N(u):=\eorec^{\tilde N,\tilde N}_{(X,\rhd),\NN}(u)\end{equation*}
where $\tilde N\alpha:= N\alpha_0\alpha_1$. Using $\Psi$, for each $u:X^\NN$ define $\gamma_u:\NN\to X^\NN\to \NN$ by
\begin{equation*}\gamma_u:=\lambda n,v\; . \; \Psi^N(\initSeg{u}{n}\ast v)\mbox{ if $v_0\lhd u_n$}.\end{equation*}
Finally, define parameters $\phi:X^\NN\to\NN$, $\xi_0:X^\NN\to\NN$ and $\xi_1: X^\NN\to X^\NN$, together with predicates $P_0$ and $C_0$, by
\begin{equation*}\begin{aligned}\phi(u)&:=Nu\gamma_u\\
\xi_0(u)&:= Mu\gamma_u\\
\xi_1(u)&:= Wu\gamma_u\\
P_0(u)&:\equiv \bar P(\bar u,\Psi^N(\bar u))\to\bar P(u,\Psi^N (u))\\
C_0(u)&:\equiv C(u,\gamma_u,Mu\gamma_u,Wu\gamma_u).\end{aligned}\end{equation*}
Now it is perhaps becoming clear to the reader what will come next: We will set up a controlled learning procedure $\lrnc{\phi}{\xi}{C_0}[\bar u]$ on these parameters exactly as in Lemma \ref{res-learnrec}, and the limit $v:=\specs{u_k}{\phi}$ of $\lrnc{\phi}{\xi}{C_0}[\bar u]$ will satisfy $P_0(v)\wedge C_0(v)$, or in other words, $\bar P(\bar u,\Psi^N(\bar u))\to\bar P(v,\Psi^N (v))$ and $C(v,\gamma_v,Mv\gamma_v,Wv\gamma_v)$, from which we will be able to construct our realizer of $\ZL$. Let's make this formal.

\begin{theorem}\label{res-ZL-main}Let $n,v$ and $\gamma$ be defined in terms of $\bar{u},N,M$ and $W$ by
\begin{equation*}\begin{aligned}n&:=\Psi^N(\bar u)\\
v&:=\limc{\phi}{\xi}{C_0}{\bar u}\\
\gamma &:=\gamma_v.\end{aligned}\end{equation*}
Then (provably in $\CONT$) these satisfy (\ref{eqn-zorn-functional}) and therefore solve the functional interpretation of $\ZL$.  \end{theorem}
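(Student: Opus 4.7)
The plan is to apply Lemma \ref{res-learnrec} to the parameters $(\phi,\xi,P_0,C_0)$ defined just above the theorem, with starting point $\bar u$, and then to extract (\ref{eqn-zorn-functional}) from the limit guarantee $P_0(v)\wedge C_0(v)$ for $v := \limc{\phi}{\xi}{C_0}{\bar u}$. Note that $P_0(\bar u)$ is the tautology $\bar P(\bar u,\Psi^N(\bar u))\to\bar P(\bar u,\Psi^N(\bar u))$, so the conclusion of Lemma \ref{res-learnrec} will hold for $\bar u$ unconditionally. Unpacking $C_0(v)$ will immediately give the second conjunct $C(v,\gamma_v,Mv\gamma_v,Wv\gamma_v)$, while $P_0(v)$ will give the implication $\bar P(\bar u,n)\to\bar P(v,\Psi^N(v))$. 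The two things that remain are therefore (i) verifying the hypotheses of Lemma \ref{res-learnrec}, and (ii) matching up $\Psi^N(v)$ with $Nv\gamma_v$ so that the first conjunct comes out in the form $\bar P(v,Nv\gamma)$.

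For the first hypothesis $\neg C_0(u)\to u_{\xi_0(u)}\rhd\xi_1(u)_0\wedge P_0(\xi(u))$, I would unpack $\neg C_0(u)$ to obtain both $(Wu\gamma_u)_0\lhd u_{Mu\gamma_u}$ (the required $\rhd$-decrease) and $\bar P(\xi(u),\gamma_u(Mu\gamma_u)(Wu\gamma_u))$; since the prefix side-condition in the definition of $\gamma_u$ is now satisfied, this witness reduces to $\Psi^N(\xi(u))$, so $\bar P(\xi(u),\Psi^N(\xi(u)))$ holds and $P_0(\xi(u))$ follows trivially. The second hypothesis $P_0(u)\to P_0(\specs{u}{\phi})$ is subtler and will rely on the key identity $\Psi^N(u)=\phi(\specs{u}{\phi})$ proved below: assuming $\bar P(u,\Psi^N(u))$, that identity together with Lemma \ref{res-spec-prop}(i) gives $\Psi^N(u)<m_0$ where $\specs{u}{\phi}=\exts{u}{m_0}$, so $\initSeg{u}{\Psi^N(u)}=\initSeg{\specs{u}{\phi}}{\Psi^N(u)}$, and Lemma \ref{res-spec-prop}(iii) then yields $\bar P(\specs{u}{\phi},\Psi^N(\specs{u}{\phi}))$ as required.

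The main obstacle, and the only real calculation, will be establishing $\Psi^N(u)=\phi(\specs{u}{\phi})$ by unfolding $\eorec^{\tilde N,\tilde N}$. Writing $\Psi^N(u)=\tilde N(\specs{\alpha}{\tilde N})$ for the sequence $\alpha_n=\pair{u_n,\lambda w\,.\,\Psi^N(\initSeg{u}{n}\ast w)\mbox{ if }w_0\lhd u_n}$, I plan to verify $\tilde N(\exts{\alpha}{m})=\phi(\exts{u}{m})$ uniformly in $m$: the first projection of $\exts{\alpha}{m}$ is visibly $\exts{u}{m}$, and its second projection should coincide with $\gamma_{\exts{u}{m}}$ since on indices $n<m$ both reduce to $\gamma_u$ and on indices $n\geq m$ both yield $0_\NN$, the latter step using Remark \ref{rem-zero} (minimality of $0_X$) to falsify the prefix condition inside $\gamma_{\exts{u}{m}}$. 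Once this is shown, the thresholds selecting $\specs{\alpha}{\tilde N}$ and $\specs{u}{\phi}$ must coincide, giving $\Psi^N(u)=\phi(\specs{u}{\phi})$. Finally, because $v$ is a limit of the learning procedure it is a fixed point of truncation, i.e.\ $\specs{v}{\phi}=v$ by Lemma \ref{res-spec-prop}(iii), so $\Psi^N(v)=\phi(v)=Nv\gamma_v=Nv\gamma$, at which point both conjuncts of (\ref{eqn-zorn-functional}) are in place.
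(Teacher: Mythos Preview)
Your proposal is correct and follows essentially the same route as the paper: verify the two hypotheses of Lemma \ref{res-learnrec} for the given $(\phi,\xi,P_0,C_0)$, with the key technical step being the identity $\Psi^N(u)=\phi(\specs{u}{\phi})$ obtained by showing $\tilde N(\exts{\alpha}{m})=\phi(\exts{u}{m})$ via $\gamma_{\exts{u}{m}}=\exts{\gamma_u}{m}$ (using minimality of $0_X$), and then using $\specs{v}{\phi}=v$ from Lemma \ref{res-spec-prop}(iii) to match $\Psi^N(v)$ with $Nv\gamma_v$. The paper carries out exactly this plan, only making slightly more explicit the intermediate equality $\Psi^N(\specs{u}{\phi})=\Psi^N(u)$ (its equation (\ref{eqn-idemp})), which is what your appeal to Lemma \ref{res-spec-prop}(iii) in the second hypothesis is really delivering.
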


\begin{proof}We use Lemma \ref{res-learnrec}, which means that we must check that each of (\ref{eqn-learnrec-i}) and (\ref{eqn-learnrec-ii}) hold for our choice of $P_0$ and $C_0$, i.e. we must prove
\begin{equation}\label{eqn-ZL-i}(\forall u)[\neg C(u,\gamma_u,Mu\gamma_u,Wu\gamma_u)\to u_{\xi_0(u)}\rhd \xi_1(u)_0\wedge (\bar P(\bar u,\Psi^N(\bar u))\to\bar P(\xi(u),\Psi^N (\xi(u)))) ]\end{equation}
and
\begin{equation}\label{eqn-ZL-ii}(\forall u)[(\bar P(\bar u,\Psi^N(\bar u))\to\bar P(u,\Psi^N (u)))\to (\bar P(\bar u,\Psi^N(\bar u))\to\bar P(\specs{u}{\phi},\Psi^N (\specs{u}{\phi})))].\end{equation} 
For the first condition, note that $\neg C(u,\gamma_u,Mu\gamma_u,Wu\gamma_u)$ is just $\neg C(u,\gamma_u,\xi_0(u),\xi_1(u))$, which implies both $\xi_1(u)_0\lhd u_{\xi_0(u)}$ and $\bar P(\initSeg{u}{\xi_0(u)}\ast\xi_1(u),\gamma_u\xi_0(u)\xi_1(u))$. But since
\begin{equation*}\initSeg{u}{\xi_0(u)}\ast\xi_1(u)=\xi(u)\mbox{ \ \ \ and \ \ \ }\gamma_u\xi_0(u)\xi_1(u)=\Psi^N(\xi(u))\end{equation*}
we have established $\bar P(\xi(u),\Psi^N(\xi(u))$, and hence the conclusion of (\ref{eqn-ZL-i}).

The second condition is more subtle: Either $\neg \bar P(\bar u,\Psi^N(\bar u))$ and we're done, or it suffices to prove $\bar P(u,\Psi^N(u))\to \bar P(\specs{u}{\phi},\Psi^N(\specs{u}{\phi}))$. We now need to unwind the definition of $\Psi^N(u)$: First note that we have 
\begin{equation*}\Psi^N(u)=\tilde N(\specs{\alpha}{\tilde N})\end{equation*}
where (using the definition of $\gamma_u$)
\begin{equation*}\alpha:=\lambda n.\pair{u_n,\lambda v\; . \; \Psi^N(\initSeg{u}{n}\ast v)\mbox{ if $v_0\lhd u_n$}}=\lambda n.\pair{u_n,\gamma_{u,n}}.\end{equation*}
and so in particular we have (by the definitions of $\tilde N$ and $\phi$)
\begin{equation}\label{eqn-Nphi}\tilde N(\exts{\alpha}{m})=N(\exts{u}{m})(\exts{\gamma_u}{m})=N(\exts{u}{m})(\gamma_{\exts{u}{m}})=\phi(\exts{u}{m})\end{equation}
where for the central equality we use the assumption that $0_X$ is minimal with respect to $\rhd$ and so
\begin{equation*}\begin{aligned}\gamma_{\exts{u}{m}}=\lambda n, v.\Psi^N(\initSeg{\exts{u}{m}}{n}\ast v)\mbox{ if $v_0\lhd (\exts{u}{m})_n$}&=\lambda n\; . \; \begin{cases}\lambda v.\Psi^N(\initSeg{u}{n}\ast v)\mbox{ if $v_0\lhd u_n$} & \mbox{if $n<m$} \\ \lambda v.\Psi^N(\initSeg{u}{n}\ast v)\mbox{ if $v_0\lhd 0_X$} & \mbox{otherwise}\end{cases}\\
&=\lambda n\; . \; \begin{cases}\lambda v.\gamma_unv & \mbox{if $n<m$} \\ \lambda v.0 & \mbox{otherwise}\end{cases}\\
&=\exts{\gamma_u}{m}.\end{aligned}\end{equation*}
Let $m_0$ be the least number such that $\tilde N(\exts{\alpha}{m_0})<m_0$ (which exists since we are assuming $\CONT$), and therefore by (\ref{eqn-Nphi}) the also the least number such that $\phi(\exts{u}{m_0})<m_0$. Then by Lemma \ref{res-spec-prop} we have that for all $u$:
\begin{equation}\label{eqn-Psi}\Psi^N(u)=\tilde N(\specs{\alpha}{\tilde N})\stackrel{L.\; \ref{res-spec-prop} (\ref{item-spec-propi})}{=}\tilde N(\exts{\alpha}{m_0})\stackrel{(\ref{eqn-Nphi})}{=}\phi(\exts{u}{m_0})\stackrel{L.\; \ref{res-spec-prop} (\ref{item-spec-propi})}{=}\phi(\specs{u}{\phi})=N\specs{u}{\phi}\gamma_{\specs{u}{\phi}}.\end{equation}
In particular, by Lemma \ref{res-spec-prop} (\ref{item-spec-propiii}) we have
\begin{equation}\label{eqn-idemp}\Psi^N(\specs{u}{\phi})\stackrel{(\ref{eqn-Psi})}{=}\phi(\specs{\specs{u}{\phi}}{\phi})\stackrel{L.\; \ref{res-spec-prop} (\ref{item-spec-propiii})}{=}\phi(\specs{u}{\phi})\stackrel{(\ref{eqn-Psi})}{=}\Psi^N(u)\end{equation}
and
\begin{equation}\label{eqn-less}\Psi^N(u)\stackrel{(\ref{eqn-Psi})}{=}\tilde N(\exts{\alpha}{m_0})<m_0\end{equation}
and therefore
\begin{equation*}\initSeg{\specs{u}{\phi}}{\Psi^N(\specs{u}{\phi})}\stackrel{(\ref{eqn-idemp})}{=}\initSeg{\specs{u}{\phi}}{\Psi^N(u)}\stackrel{L.\; \ref{res-spec-prop}(\ref{item-spec-propi})}{=}\initSeg{\exts{u}{m_0}}{\Psi^N(u)}\stackrel{(\ref{eqn-less})}{=}\initSeg{u}{\Psi^N(u)}\end{equation*}
and thus $P(\initSeg{u}{\Psi^N(u)})$ implies $P(\initSeg{\specs{u}{\phi}}{\Psi^N(\specs{u}{\phi})})$, which establishes (\ref{eqn-ZL-ii}).

It now follows from Lemma \ref{res-learnrec} that
\begin{equation*}P_0(\bar u)\to P_0(v)\wedge C_0(v)\end{equation*}
and since $P_0(\bar u)$ is trivially true we have established
\begin{equation*}\bar P(\bar u,\Psi^N(\bar u))\to\bar P(v,\Psi^N (v))\mbox{ \ \ \ and \ \ \ }C(v,\gamma_v,Mv\gamma_v,Wv\gamma_v).\end{equation*}
We can now prove (\ref{eqn-zorn-functional}). Suppose that $\bar P(u,n)$ holds. Then since $n=\Psi^N(\bar u)$ from the left hand side we have $\bar P(v,\Psi^N(v))$. Now, since $v=\specs{u_k}{\phi}$ for some element in the learning procedure $\lrnc{\phi}{\xi}{C_0}[\bar u]$, by (\ref{eqn-idemp}) we have
\begin{equation*}\Psi^N(v)=\Psi^N(\specs{u_k}{\phi})=\phi(\specs{u_k}{\phi})=N\specs{u_k}{\phi}\gamma_{\specs{u_k}{\phi}}=Nv\gamma_v=Nv\gamma\end{equation*}
and so we have established $\bar P(v,Nv\gamma)$. Then, since $C(v,\gamma,Mv\gamma,Wv\gamma)$ is given to us automatically, we have proven
\begin{equation*}\bar P(u,n)\to \bar P(v,Nv\gamma)\wedge C(v,\gamma,Mv\gamma,Wv\gamma)\end{equation*}
which is exactly (\ref{eqn-zorn-functional}), and so we're done.\end{proof}

The results of this section mark the technical climax of the paper, and in particular form our broadest and most widely applicable contribution. While the proofs above are perhaps somewhat difficult to navigate, it is important to emphasise that most of the technical details are bureaucratic in nature, in the unwinding of all the definitions and the careful use of Lemma \ref{res-spec-prop}. The \emph{intuition} behind our realizer, on the other hand, should hopefully be clear from the somewhat more informal discussion in Section \ref{sec-ZLa}. In any case, now that the hard work is done, a computational interpretation of Nash-William's proof of Higman's lemma follows relatively easily.

\section{Interpreting the proof of $\WQO(=_{\BB,\ast})$}
\label{sec-higman}

We are now finally ready to produce our realizer for the statement that $=_{\BB^\ast}$ is a WQO. In fact we do something more general, namely give a computational interpretation to the proof that $\sWQO(\preceq)\to\WQO(\preceq_\ast)$, which is valid for \emph{any} well quasi-order $\preceq$. Recall that the functional interpretation of $\sWQO(\preceq)$ is given by
\begin{equation}\label{eqn-wqoseq-nd}(\forall x^{X^\NN},\omega^{(\NN\to\NN)\to\NN})(\exists g)(\forall i<j\leq \omega g)(g(i)<g(j)\wedge x_{g(i)}=x_{g(j)}).\end{equation}
Lemma \ref{res-higman-fiddly} makes precise exactly how we use the assumption $\sWQO(\preceq)$ to prove $\WQO(\preceq_\ast)$: Namely given a hypothetical minimal bad sequence of words $v$, we take the sequence $\bar{v}$ and require that our monotone subsequence $g$ be valid up to the point $k$, where $k$ is such that $P(w,k)$ holds for $w=\initSeg{v}{g(0)}\ast\tilde v_{g}$ as defined in Lemma \ref{res-higman-fiddly}. If minimality of $v$ is witnessed by some functional $\gamma$ then such a $k$ would be given by $\gamma(g0)(\tilde v_g)$. This motivates the following:
\begin{lemma}\label{res-higman-lemma}Suppose that $G$ is a realizer for (\ref{eqn-wqoseq-nd}):
\begin{equation}\label{eqn-resseq-nd}(\forall x,\omega)(\forall i<j<\omega G_{x,\omega})(G_{x,\omega}(i)<G_{x,\omega}(j)\wedge x_{G_{x,\omega}(i)}\preceq x_{G_{x,\omega}(j)}).\end{equation}
Then from this we can construct a functional $H:X^\NN\to (\NN\to X^\NN\to \NN)\to \NN^\NN$ satisfying
\begin{equation}\label{eqn-wqoseqs-nd}(\forall v,\gamma)(\forall i<j<\gamma(H_{v,\gamma}(0))(\tilde v_{H_{v,\gamma}}))(H_{v,\gamma}(i)<H_{v,\gamma}(j)\wedge \bar v_{H_{v,\gamma}(i)}\preceq \bar v_{H_{v,\gamma}(j)})\end{equation}
where $v_{H_{v,\gamma}}$ is shorthand for $\lambda i.v_{v_{H_{v,\gamma}}(i)}$, and $\tilde v,\bar v$ are defined as in Lemma \ref{res-higman-fiddly}.\end{lemma}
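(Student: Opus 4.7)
The plan is to derive $H$ by feeding the hypothetical $G$ a carefully chosen counterexample functional $\omega$ which is constructed from the minimality witness $\gamma$. This is exactly the content of the interaction between $\sWQO(\preceq)$ and the approximate minimality condition as it plays out in the formal proof of Theorem \ref{res-higman-formal} via Lemma \ref{res-higman-fiddly}: given an approximate minimal bad sequence $v$ together with its witness $\gamma$, Lemma \ref{res-higman-fiddly} tells us that we only need our monotone subsequence of $\bar v$ to be valid up to a bound $k$ for which the failure of $P$ on $\initSeg{v}{g(0)}\ast\tilde v_g$ is witnessed, and $\gamma(g(0))(\tilde v_g)$ is by construction exactly such a $k$.

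Concretely, given $v$ and $\gamma$, I would define the functional $\omega_{v,\gamma}:(\NN\to\NN)\to\NN$ by
\begin{equation*}
\omega_{v,\gamma}(g) \; := \; \gamma(g(0))\bigl(\tilde v_g\bigr),
\end{equation*}
where $\tilde v_g:=\lambda i.\tilde v_{g(i)}$, and then simply set
\begin{equation*}
H_{v,\gamma} \; := \; G_{\bar v,\,\omega_{v,\gamma}}.
\end{equation*}
Specializing the assumed property (\ref{eqn-resseq-nd}) of $G$ to $x:=\bar v$ and $\omega:=\omega_{v,\gamma}$ yields exactly
\begin{equation*}
(\forall i<j<\omega_{v,\gamma}(G_{\bar v,\omega_{v,\gamma}}))\bigl(G_{\bar v,\omega_{v,\gamma}}(i)<G_{\bar v,\omega_{v,\gamma}}(j)\wedge \bar v_{G_{\bar v,\omega_{v,\gamma}}(i)}\preceq \bar v_{G_{\bar v,\omega_{v,\gamma}}(j)}\bigr),
\end{equation*}
and unfolding the two definitions $H_{v,\gamma}=G_{\bar v,\omega_{v,\gamma}}$ and $\omega_{v,\gamma}(H_{v,\gamma})=\gamma(H_{v,\gamma}(0))(\tilde v_{H_{v,\gamma}})$ turns this into exactly the claim (\ref{eqn-wqoseqs-nd}).

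There is no real obstacle here; the argument is essentially a type-level pattern match. The only conceptual point worth flagging is why $\omega_{v,\gamma}$ is the \emph{right} choice: it is dictated by Lemma \ref{res-higman-fiddly}, which pinpoints $\gamma(g(0))(\tilde v_g)$ as the precise bound on the length of the monotone subsequence needed to force the case analysis in that lemma to produce a good pair for $v$. Thus the lemma is best viewed not as a new construction but as packaging the counterexample-functional shape of $\sWQO$ into a form adapted for direct use with an approximate minimal bad sequence; the real work is reserved for the subsequent section where $H$ is combined with the $\ZL$-realizer from Theorem \ref{res-ZL-main}.
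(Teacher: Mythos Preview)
Your proposal is correct and essentially identical to the paper's proof: define $\omega_{v,\gamma}(g):=\gamma(g(0))(\tilde v_g)$, set $H_{v,\gamma}:=G_{\bar v,\omega_{v,\gamma}}$, and observe that (\ref{eqn-wqoseqs-nd}) is then a direct instantiation of (\ref{eqn-resseq-nd}). Your additional commentary on why $\omega_{v,\gamma}$ is the right choice matches the motivation the paper gives in the paragraph preceding the lemma.
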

\begin{proof}Define $\omega_{v,\gamma}g:= \gamma(g(0))(\tilde v_g)$ and then $H_{v,\gamma}:=G_{\bar v,\omega_{v,\gamma}}$. Then (\ref{eqn-wqoseqs-nd}) follows directly from (\ref{eqn-resseq-nd}).\end{proof}

We will now give a computational version of Lemma \ref{res-higman-fiddly} as a whole:

\begin{lemma}\label{res-higman-fiddlylem}Suppose that $H$ satisfies (\ref{eqn-wqoseqs-nd}) and that $v$ and $\gamma$ satisfy $C(v,\gamma,H_{v,\gamma}(0),\tilde v_{H_{v,\gamma}})$, which analogously to before abbreviates
\begin{equation}\label{eqn-minim}\tilde v_{H_{v,\gamma}(0)}\lhd v_{H_{v,\gamma}(0)}\to \underbrace{(\exists i<j<\gamma (H_{v,\gamma}(0))(\tilde v_{H_{v,\gamma}}))((\initSeg{v}{H_{v,\gamma}(0)}\ast \tilde v_{H_{v,\gamma}})_i\preceq_\ast (\initSeg{v}{H_{v,\gamma}(0)}\ast \tilde v_{H_{v,\gamma}})_j)}_{\neg \bar P(\initSeg{v}{H_{v,\gamma}(0)}\ast \tilde v_{H_{v,\gamma}},\gamma (H_{v,\gamma}(0))(\tilde v_{H_{v,\gamma}}))}.\end{equation}
Then we have $\neg \bar P(v,H_{v,\gamma}(\gamma(H_{v,\gamma}(0))(\tilde v_{H_{v,\gamma}}))+2)$ i.e.
\begin{equation*}(\exists i<j<H_{v,\gamma}(\gamma(H_{v,\gamma}(0))(\tilde v_{H_{v,\gamma}}))+2)(v_i\preceq_\ast v_j).\end{equation*}\end{lemma}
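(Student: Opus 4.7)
The plan is to recognise this lemma as a direct specialisation of Lemma \ref{res-higman-fiddly}: I would set $g := H_{v,\gamma}$ and $k := \gamma(H_{v,\gamma}(0))(\tilde v_{H_{v,\gamma}})$, and show that under this substitution both hypotheses of that earlier lemma are satisfied. Observe first that, with these choices, the auxiliary sequence built in Lemma \ref{res-higman-fiddly}, namely $\initSeg{v}{g(0)}$ concatenated with $\tilde v_{g(0)}, \tilde v_{g(1)}, \ldots$, becomes exactly $\initSeg{v}{H_{v,\gamma}(0)} \ast \tilde v_{H_{v,\gamma}}$, and that the bound $g(k)+2$ in its conclusion coincides with $H_{v,\gamma}(\gamma(H_{v,\gamma}(0))(\tilde v_{H_{v,\gamma}}))+2$. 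So, once the two hypotheses are discharged, the conclusion follows verbatim.

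The minimality hypothesis (\ref{eqn-higman-min}) is precisely what the assumption $C(v,\gamma,H_{v,\gamma}(0),\tilde v_{H_{v,\gamma}})$ asserts in (\ref{eqn-minim}): since $w_{g(0)} = \tilde v_{g(0)} = \tilde v_{H_{v,\gamma}(0)}$, the conditional ``$w_{g(0)} \lhd v_{g(0)} \to (\exists i<j<k)(w_i \preceq_\ast w_j)$'' is exactly (\ref{eqn-minim}) after unfolding $\bar P$. The monotonicity hypothesis (\ref{eqn-higman-mon}) is supplied by (\ref{eqn-wqoseqs-nd}), which provides both $H_{v,\gamma}(i) < H_{v,\gamma}(j)$ and $\bar v_{H_{v,\gamma}(i)} \preceq \bar v_{H_{v,\gamma}(j)}$ on the required range of indices (since the case analysis inside Lemma \ref{res-higman-fiddly} only invokes monotonicity for pairs strictly below $k$).

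I expect no real obstacle here beyond bookkeeping: the combinatorics was all absorbed into Lemma \ref{res-higman-fiddly}, and the present statement merely repackages that lemma in a form tailored to feed into the functional interpretation constructed in the previous sections. The most error-prone point will be keeping the triply-nested expression $\gamma(H_{v,\gamma}(0))(\tilde v_{H_{v,\gamma}})$ straight and matching the strict bound in (\ref{eqn-wqoseqs-nd}) against the (slightly generous) $\leq k$ appearing in the statement of (\ref{eqn-higman-mon}); since the proof of Lemma \ref{res-higman-fiddly} actually only uses the strict version, no genuine weakening or reindexing is required.
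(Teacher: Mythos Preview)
Your proposal is correct and matches the paper's own proof almost verbatim: the paper also sets $g:=H_{v,\gamma}$ and $k:=\gamma(H_{v,\gamma}(0))(\tilde v_{H_{v,\gamma}})$, identifies $w$ with $\initSeg{v}{H_{v,\gamma}(0)}\ast\tilde v_{H_{v,\gamma}}$, and then reads off (\ref{eqn-higman-min}) from (\ref{eqn-minim}) and (\ref{eqn-higman-mon}) from (\ref{eqn-wqoseqs-nd}) before invoking Lemma~\ref{res-higman-fiddly}. Your additional remark about the strict-versus-nonstrict bound is a fair observation and does not affect the argument.
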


\begin{proof}This follows directly from Lemma \ref{res-higman-fiddly}. First of all, we define $g:=H_{v,\gamma}$, then the sequence $w$ in Lemma \ref{res-higman-fiddly} becomes identified with $\initSeg{v}{H_{v,\gamma}(0)}\ast\tilde v_{H_{v,\gamma}}$, and so setting $k:=\gamma(H_{v,\gamma}(0))(\tilde v_{H_{v,\gamma}})$, the equation (\ref{eqn-minim}) is just (\ref{eqn-higman-min}), while (\ref{eqn-wqoseqs-nd}) is just (\ref{eqn-higman-mon}), and so by the lemma there exists some $i<j<g(k)+2$ such that $v_i\preceq_\ast v_j$. But since $g(k)+2=H_{v,\gamma}(\gamma(H_{v,\gamma}(0))(\tilde v_{H_{v,\gamma}}))+2$ we're done.\end{proof}

What we have shown above is that if $C(v,\gamma,H_{v,\gamma}(0),\tilde v_{H_{v,\gamma}})$ then $\neg \bar P(v,H_{v,\gamma}(\gamma(H_{v,\gamma}(0))(\tilde v_{H_{v,\gamma}}))+2)$, or in other words,
\begin{equation*}\bar P(v,H_{v,\gamma}(\gamma(H_{v,\gamma}(0))(\tilde v_{H_{v,\gamma}}))+2)\wedge C(v,\gamma,H_{v,\gamma}(0),\tilde v_{H_{v,\gamma}})\end{equation*}
must be false. But this is just the conclusion of the function interpretation of $\ZL$ for $Nv\gamma=H_{v,\gamma}(\gamma(H_{v,\gamma}(0))(\tilde v_{H_{v,\gamma}}))+2$, $Mv\gamma=H_{v,\gamma}(0)$ and $Wv\gamma=\tilde v_{H_{v,\gamma}}$, and so for any $v,\gamma$ and $n$ satisfying (\ref{eqn-zorn-functional}) we must have $\neg \bar P(u,n)$, which is exactly what we want! Let's make this formal.
\begin{theorem}\label{res-higman-comp}Define $N,M:(X^\ast)^\NN\to (\NN\to (X^\ast)^\NN\to\NN)\to\NN$ and $W:(X^\ast)^\NN\to (\NN\to(X^\ast)^\NN\to\NN)\to (X^\ast)^\NN$ by
\begin{equation*}\begin{aligned}Nv\gamma&:=H_{v,\gamma}(\gamma(H_{v,\gamma}(0))(\tilde v_{H_{v,\gamma}}))+2\\
Mv\gamma &:=H_{v,\gamma}(0)\\
Wv\gamma&:=\tilde v_{H_{v,\gamma}}\end{aligned}\end{equation*}
where $H$ is some functional which satisfies (\ref{eqn-wqoseqs-nd}). Define $P(s):\equiv (\forall i<j<|s|)(s_i\npreceq_\ast s_j)$ so that $\bar P(u,k)\equiv (\forall i<j<k)(u_i\npreceq_\ast u_j)$, and let $n,v$ and $\gamma$ be such that they satisfy the functional interpretation of $\ZL$ relative to $N,M,W$ defined above:
\begin{equation}\label{eqn-ZLnd-used}\bar P(u,n)\to \bar P(v,Nv\gamma)\wedge C(v,\gamma,Mv\gamma,Wv\gamma)\end{equation}
Then we have $\neg \bar P(u,n)$ and hence
\begin{equation*}(\forall u)(\exists i<j<n)(u_i\preceq_\ast u_j).\end{equation*}\end{theorem}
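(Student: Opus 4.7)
The plan is to combine Lemma \ref{res-higman-fiddlylem} (the computational form of the fiddly case analysis) with the realizer for $\ZL$ produced in Theorem \ref{res-ZL-main} in order to derive a contradiction from $\bar P(u,n)$, after which the desired existential conclusion follows by quantifier-free classical logic.

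Concretely, I would argue by contradiction. Assume $\bar P(u,n)$. Then the premise of (\ref{eqn-ZLnd-used}) fires, giving us simultaneously
\begin{equation*}
\bar P(v, Nv\gamma) \qquad \text{and} \qquad C(v,\gamma,Mv\gamma,Wv\gamma).
\end{equation*}
By the definitions $Mv\gamma = H_{v,\gamma}(0)$ and $Wv\gamma = \tilde v_{H_{v,\gamma}}$, the second conjunct is precisely the hypothesis $C(v,\gamma,H_{v,\gamma}(0),\tilde v_{H_{v,\gamma}})$ required by Lemma \ref{res-higman-fiddlylem}. Applying that lemma (whose other hypothesis, namely that $H$ satisfies (\ref{eqn-wqoseqs-nd}), is assumed on $H$) yields
\begin{equation*}
\neg \bar P\bigl(v,\; H_{v,\gamma}(\gamma(H_{v,\gamma}(0))(\tilde v_{H_{v,\gamma}})) + 2\bigr).
\end{equation*}
But by construction the number inside the $\neg \bar P$ is exactly $Nv\gamma$, so this directly contradicts the first conjunct $\bar P(v, Nv\gamma)$.

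Hence our assumption $\bar P(u,n)$ is false, i.e.\ $\neg \bar P(u,n)$. Unfolding the definition of $\bar P$ this reads $\neg (\forall i<j<n)(u_i \npreceq_\ast u_j)$; since the matrix is quantifier-free and the quantifiers are bounded, this is classically (indeed, decidably) equivalent to $(\exists i<j<n)(u_i \preceq_\ast u_j)$. Since $u$ was arbitrary, we obtain $(\forall u)(\exists i<j<n)(u_i \preceq_\ast u_j)$ as claimed.

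There is no genuine obstacle here beyond bookkeeping: the whole argument is essentially the observation that the definitions of $N,M,W$ have been rigged so that the conclusion of Lemma \ref{res-higman-fiddlylem} coincides syntactically with the negation of the first conjunct of the conclusion of the functional interpretation of $\ZL$. The only thing one must be careful about is that $n,v,\gamma$ are supplied by Theorem \ref{res-ZL-main} relative to \emph{these particular} $N,M,W$, so the dependencies $H_{v,\gamma}$, $\tilde v_{H_{v,\gamma}}$ etc.\ really are evaluated at the same $v$ and $\gamma$ on both sides of the contradiction -- once this is noted, the proof collapses to a two-line invocation of the previous two results.
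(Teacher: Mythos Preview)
Your proof is correct and matches the paper's own argument essentially line for line: the paper also applies Lemma \ref{res-higman-fiddlylem} to obtain $\neg\bar P(v,Nv\gamma)$ from $C(v,\gamma,Mv\gamma,Wv\gamma)$, and then uses the contrapositive of (\ref{eqn-ZLnd-used}) to conclude $\neg\bar P(u,n)$. The only cosmetic difference is that you phrase it as a proof by contradiction rather than an explicit contrapositive, and you needn't invoke Theorem \ref{res-ZL-main} at all since the statement only assumes that $n,v,\gamma$ satisfy (\ref{eqn-ZLnd-used}).
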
 

\begin{proof}As shown above, if $H$ satisfies $(\ref{eqn-wqoseqs-nd})$ and $C(v,\gamma,Mv\gamma,Wv\gamma)$, then by Lemma \ref{res-higman-fiddlylem} we have $\neg \bar P(v,Nv\gamma)$, which implies $\neg (\bar P(v,Nv\gamma)\wedge C(v,\gamma,Mv\gamma,Wv\gamma))$, and so by the contrapositive of (\ref{eqn-ZLnd-used}) we have $\neg\bar P(u,n)$.\end{proof}

Therefore any program which computes $v,\gamma$ and $n$ on any $u,N,M$ and $W$, in particular that of Theorem \ref{res-ZL-main}, can be converted to a program which realizes $\WQO(\preceq_\ast)$:

\begin{corollary}\label{res-higman-prog}Suppose that $H$ satisfies (\ref{eqn-wqoseqs-nd}), and define $Nv\gamma:=H_{v,\gamma}(\gamma(H_{v,\gamma}(0))(\tilde v_{H_{v,\gamma}}))+2$. Then provably in $\CONT$ the functional $\Phi:(X^\ast)^\NN\to\NN$ defined by
\begin{equation*}\Phi(u):=\Psi^N(u)\end{equation*}
where $\Psi^N(u)$ is defined as in Theorem \ref{res-ZL-main} witnesses $\WQO(\preceq_\ast)$ i.e.
\begin{equation*}(\exists i<j<\Phi(u))(u_i\preceq_\ast u_j).\end{equation*}\end{corollary}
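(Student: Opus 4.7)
My plan is to simply chain together Theorem \ref{res-ZL-main} and Theorem \ref{res-higman-comp}, since the corollary is really just the composition of their realizers. First I would take the $N,M,W$ defined in Theorem \ref{res-higman-comp} (in terms of the postulated $H$) and feed them into Theorem \ref{res-ZL-main} together with the arbitrary input sequence $u$. This produces concrete terms $n,v,\gamma$ satisfying the functional interpretation (\ref{eqn-zorn-functional}) of $\ZL$, and crucially the first coordinate is $n := \Psi^N(u)$, which is by definition $\Phi(u)$.

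Next I would invoke Theorem \ref{res-higman-comp}: since our $n,v,\gamma$ satisfy exactly the hypothesis (\ref{eqn-ZLnd-used}) of that theorem, we conclude $\neg \bar P(u,n)$. Unwinding $\bar P(u,n)\equiv (\forall i<j<n)(u_i\npreceq_\ast u_j)$ this immediately gives $(\exists i<j<n)(u_i\preceq_\ast u_j)$, and substituting $n=\Phi(u)$ yields the stated witness.

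The only bookkeeping needed is to check that the triple $(n,v,\gamma)$ produced by Theorem \ref{res-ZL-main} really is admissible input to Theorem \ref{res-higman-comp}, which amounts to observing that we used the \emph{same} $N,M,W$ in both applications, and to recall that the $M$ and $W$ coordinates only enter via the conjunct $C(v,\gamma,Mv\gamma,Wv\gamma)$ inside (\ref{eqn-ZLnd-used}). The provability side (rather than plain truth) comes from the fact that Theorem \ref{res-ZL-main} is proved in $\CONT$; Theorem \ref{res-higman-comp}, by contrast, is a purely logical consequence of Lemma \ref{res-higman-fiddlylem}, so no extra axioms are incurred at this stage.

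The main obstacle is therefore entirely absorbed by the preceding work: the genuinely difficult content lies in Theorem \ref{res-ZL-main} (the open-recursive construction of $\Psi^N$ and the verification via Lemma \ref{res-learnrec}) and in the correct identification of $N,M,W$ that makes the minimal-bad-sequence contradiction of Lemma \ref{res-higman-fiddly} go through. Once these are in place, the corollary is essentially a one-line composition, and the only care required is to keep the function variables $(u,v,\gamma)$ and their subscripting conventions straight when writing it out.
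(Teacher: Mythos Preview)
Your proposal is correct and matches the paper's approach exactly: the corollary is obtained by feeding the $N,M,W$ of Theorem \ref{res-higman-comp} into Theorem \ref{res-ZL-main} to produce $n=\Psi^N(u)$, $v$, $\gamma$ satisfying (\ref{eqn-ZLnd-used}), and then applying Theorem \ref{res-higman-comp} to conclude $\neg\bar P(u,n)$, i.e.\ $(\exists i<j<\Phi(u))(u_i\preceq_\ast u_j)$. The paper in fact leaves the proof implicit, simply noting before the corollary that any realizer of (\ref{eqn-zorn-functional}), in particular that of Theorem \ref{res-ZL-main}, yields a realizer of $\WQO(\preceq_\ast)$.
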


\begin{corollary}\label{res-prog}Let $H$ be defined as in Lemma \ref{res-higman-lemma} for $G$ as defined in Section \ref{sec-sWQO}. Then $\Phi:(\BB^\ast)^\NN\to\NN$ as defined in Corollary \ref{res-higman-prog} witnesses $\WQO(=_{\BB,\ast})$ i.e.
\begin{equation*}(\exists i<j<\Phi(u))(u_i=_{\BB,\ast} u_j).\end{equation*}\end{corollary}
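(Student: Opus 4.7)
The plan is to assemble the three main components established in the paper: the realizer $G$ for the functional interpretation of $\sWQO(=_\BB)$ constructed in Section \ref{sec-sWQO}, the conversion of $G$ into the appropriate $H$ via Lemma \ref{res-higman-lemma}, and the realizer $\Phi$ for $\WQO(\preceq_\ast)$ from Corollary \ref{res-higman-prog}. Since all the substantive work has been done, the proof of the corollary is essentially a verification that these pieces fit together at the boolean instance $\preceq \; \equiv \; =_\BB$.

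First I would check that the $G$ constructed in Section \ref{sec-sWQO-simp} does indeed satisfy (\ref{eqn-resseq-nd}) with $X=\BB$ and $\preceq \; \equiv \; =_\BB$. This is exactly the content of equation (\ref{eqn-wqob}): the $(c,f)$ produced by the case distinction in Section \ref{sec-sWQO-simp} provides, for any $x:\BB^\NN$ and counterexample functional $\omega$, a monotone sequence $f$ together with a boolean $c$ such that $x_{f(i)}=c$ for all $i\leq\omega f$. Reading $G_{x,\omega}:=f$ (ignoring the auxiliary $c$) then gives a realizer of $\sWQO(=_\BB)$ in the explicit Skolemized form of (\ref{eqn-resseq-nd}).

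Next I would apply Lemma \ref{res-higman-lemma} with this $G$ to obtain the functional $H_{v,\gamma}$ satisfying (\ref{eqn-wqoseqs-nd}); the construction amounts to instantiating $\omega_{v,\gamma}g:=\gamma(g(0))(\tilde v_g)$ and setting $H_{v,\gamma}:=G_{\bar v,\omega_{v,\gamma}}$. Then, with $N v\gamma:=H_{v,\gamma}(\gamma(H_{v,\gamma}(0))(\tilde v_{H_{v,\gamma}}))+2$ as in Corollary \ref{res-higman-prog}, the functional $\Phi(u):=\Psi^N(u)$ defined via the explicit open recursion of Theorem \ref{res-ZL-main} yields a witness to $\WQO(=_{\BB,\ast})$.

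The main step is simply an appeal to Corollary \ref{res-higman-prog}, which in turn rests on Theorem \ref{res-higman-comp} and ultimately on Theorem \ref{res-ZL-main}. There is no real obstacle here beyond bookkeeping: one must confirm that the type $X^\ast$ in Corollary \ref{res-higman-prog} is instantiated to $\BB^\ast$, that the prefix relation $\lhd$ on $\BB^\ast$ is wellfounded with minimal element $\seq{}=0_{\BB^\ast}$ as required by Remark \ref{rem-zero}, and that the verification proceeds inside a theory admitting $\CONT$, so that the totality of $\Psi^N$ (Theorem \ref{res-EOR}) and the correctness of the learning procedure (Lemma \ref{res-learnrec}) are available. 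With those routine checks the conclusion $(\exists i<j<\Phi(u))(u_i=_{\BB,\ast} u_j)$ follows immediately.
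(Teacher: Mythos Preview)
Your proposal is correct and follows the paper's approach: the paper gives no explicit proof of this corollary, since it is simply the specialisation of Corollary \ref{res-higman-prog} to $X=\BB$ and $\preceq\,=\,=_\BB$, plugging in the $H$ obtained from Lemma \ref{res-higman-lemma} and the $G$ of Section \ref{sec-sWQO}. Your expansion of that chain of dependencies, together with the side-condition checks (wellfoundedness of the prefix relation, $0_{\BB^\ast}=\seq{}$ minimal, working under $\CONT$), is exactly what is implicit in the paper.

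One small imprecision: the realizer displayed in Section \ref{sec-sWQO-simp} is the pair $(c,f)$ witnessing the \emph{intermediate} statement $(\forall\xi)(\exists c,f)(f(\xi cf)\geq\xi cf\wedge x_{f(\xi cf)}=c)$, not yet the $g$ witnessing (\ref{eqn-wqob}). To obtain $G_{x,\omega}$ one still needs the primitive-recursive step $g(0):=f(0)$, $g(n+1):=f(g(n)+1)$ from the proof of Theorem \ref{res-bool-WQO-formal} (and to choose $\xi$ in terms of $\omega$ accordingly). This is routine, but ``$G_{x,\omega}:=f$'' is not literally correct.
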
 

\begin{remark}To construct our realizer for $\WQO(\preceq_\ast)$ we have only used the first component $n$ of the full functional interpretation $n,v,\gamma$ of $\ZL$. This makes sense: We actually prove via Lemma \ref{res-higman-fiddlylem} that $(\exists v,\gamma)Q(v,\gamma)\to\bot$ where $Q(v,\gamma)$ abbreviates the conclusion of (\ref{eqn-ZLnd-used}), and so to realize Higman's lemma we in fact only need to produce some functional $\Phi$ such that $\bar P(u,\Phi(u))\to (\exists v,\gamma)Q(v,\gamma)$, and so the full computational interpretation of $\ZL$ via learning procedures was not strictly necessary. However, this simply emphasises the fact that we have achieved much more that a realizer for Higman's lemma - Theorem \ref{res-ZL-main} allows us to extract a program from \emph{any} proof which uses $\ZL$, and in general this program may well need a specific $v$ and $\gamma$ satisfying $Q(v,\gamma)$, even though here that was not the case. There is a further point to be made in this direction - namely that the concrete witnesses for $v$ and $\gamma$ enables us to \emph{verify} our realizer $\Phi(u)$ in a quantifier-free theory, a fact that is relevant to those inclined towards foundational issues.  \end{remark}

Before we conclude, it is worth pausing for a moment and trying to explain from an algorithmic point of view what the realizer we get in Corollary \ref{res-higman-prog} actually does. Note that all of the following is essentially just an informal recapitulation of ideas contained in the preceding results. Roughly speaking, $\Phi(u)$ encodes a program which works by recursion on the lexicographic ordering $\rrhd$. First, it finds the point $m_0$ such that 
\begin{equation*}N(\exts{u}{m_0})(\exts{\gamma_u}{m_0})<m_0\end{equation*}
where $\gamma_u:=\lambda n,w\; . \; \Phi(\initSeg{u}{n}\ast w)\mbox{ if $w_0\lhd u_n$}$, and so in particularly it only looks at the sequence $u$ at points $n<m_0$. For simplicity let's define $u',\gamma':=\exts{u}{m_0},\exts{\gamma_u}{m_0}$. Now, using any program $H$ which realizes $\sWQO(\preceq)$ on the sequence $\bar{u}'$ we find a sufficiently large approximation $H_{u',\gamma'}$ to a constant subsequence, which works up to the point $\gamma'(H_{u',\gamma'}(0))( \tilde u'_{H_{u',\gamma'}})$.

Now if $\tilde u'_{H_{u',\gamma'}(0)}\lhd u'_{H_{u',\gamma'}(0)}$ then we must have $H_{u',\gamma'}(0)<m_0$ (using our assumption that $0_X$ is chosen to be minimal with respect to $\lhd$) and so  $\gamma'(H_{u',\gamma'}(0))( \tilde u'_{H_{u',\gamma'}})=\Phi(\initSeg{u'}{H_{u',\gamma'}(0)}\ast \tilde u'_{H_{u',\gamma'}})$. Assuming inductively that this returns a bound for $\initSeg{u'}{H_{u',\gamma'}(0)}\ast \tilde u'_{H_{u',\gamma'}}$ being a good sequence, then using reasoning as in Lemma \ref{res-higman-fiddly} this means that $u'$ becomes a good sequence before point $H_{u',\gamma'}(\Phi(\initSeg{u'}{H_{u',\gamma'}(0)}\ast \tilde u'_{H_{u',\gamma'}}))+2=Nu'\gamma'$. But since $\Phi(u)=Nu'\gamma'<m_0$ and $u'=\exts{u}{m_0}$ then this means also that $u$ is good before $\Phi(u)$.

To verify that $\Phi(\initSeg{u'}{H_{u',\gamma'}(0)}\ast \tilde u'_{H_{u',\gamma'}})$ returns a bound, we can repeat this argument for $u_1:=\initSeg{u'}{H_{u',\gamma'}(0)}\ast \tilde u'_{H_{u',\gamma'}}$, and we end up with a learning procedure as in Section \ref{sec-ZLb}. Eventually, this learning procedure will terminate with a minimal sequence $v$ such that $\Phi(v)$ is guaranteed to witnesses that $v$ is good.

\section{Conclusion}
\label{sec-conclusion}

I will conclude by tying up everything that we've done and outlining some directions for future work. On the route to Corollary \ref{res-prog} we took what we hope was a pleasant and instructive detour through many different areas which connect proof theory and well quasi-order theory, the most important of which I will now summarise.

Right at the start, in Chapters \ref{sec-wqo}-\ref{sec-formal}, we discussed various nuances that arise when giving an axiomatic formalisation of results in WQO theory, in particular how the distinction between dependent choice or Zorn's lemma plays an important role in the context of program extraction. The full formalisation of Nash-Williams' minimal bad sequence argument has already been studied in e.g. \cite{Seisenberger(2003.0)} (in \textsc{Minlog}) and \cite{Sternagel(2013.0)} (in Isabelle/HOL), and we hope that our formal proof sketched in Chapter \ref{sec-formal} may prove informative to those working in a more hands-on manner on the formalisation of WQO theory in proof assistants.

In Chapters \ref{sec-goedel}-\ref{sec-sWQO} we took the opportunity to present G\"{o}del's functional interpretation in a way that would appeal to readers not already familiar with it. In Section \ref{sec-goedel-meaning} we placed particular emphasis on explaining how the interpretation behaves in \emph{practice}, and in this vein we gave a carefully worked out case study in Section \ref{sec-sWQO}, which also formed a key Lemma in our proof of $\WQO(=_{\BB,\ast})$. It is my sincere hope that these chapters will be a general help to those interested in how proof interpretations work, independently of the rest of the paper.

Chapters \ref{sec-ZLa} and \ref{sec-ZLb} contain our main technical contribution, namely the solution of the functional interpretation of $\ZL$. While as a direct consequence this enables us to extract a program witnessing $\WQO(=_{\BB,\ast})$, our work in these chapters is much broader, and provides us with a method of giving a computational interpretation to \emph{any} proof that can be formalised in $\PAomega+\QFAC+\ZL$, where moreover $\ZL$ can involve any relation $(X,\rhd)$ which is provably wellfounded. In particular, this paves the way for the extraction of programs from much more complex proofs in WQO theory, such as Kruskal's theorem, and we intend to address this in future work.

Hidden in Chapters \ref{sec-ZLa} and \ref{sec-ZLb} is also a small extension of my work on \emph{learning procedures} \cite{Powell(2016.0)}. While in this paper they play the role of making our computational interpretation of $\ZL$ more intuitive, Lemma \ref{res-learnrec} is of interest in its own right, as it demonstrates that we can extend the notion of a learning procedure as introduced in \cite{Powell(2016.0)} to the non-wellfounded ordering $\rrhd$. We anticipate that a number of variants of open induction or Zorn's lemma over $\rrhd$ could be given computational interpretations by appealing directly to Lemma \ref{res-learnrec} as a intermediate result, and this was part of our motivation for stating it explicitly here.

Sandwiched between these sections is Chapter \ref{sec-OR}, which itself forms a small essay on higher-type computability theory, and the various ways of carrying out recursion over $\rrhd$ in the continuous functionals. There are a number of interesting questions to be answered in this direction. Firstly, what is the relationship between $\eorec$, Berger's open recursion and the many variants of bar recursion which have been devised in the context of proof theory? I have already shown that Berger's open recursion is primitive recursively equivalent to \emph{modified bar recursion} \cite{BergOli(2005.0)} and thus strictly stronger than Spector's original bar recursion, but I conjecture that in contrast, $\eorec$ is equivalent to Spector's bar recursion and thus weaker than Berger's open recursion. This would also imply that $\eorec$ exists in the type structure of $\modmaj$ strongly majorizable functionals, and so does not necessarily rely on continuity to be a wellfounded form of recursion. It would be interesting to explore some of these issues in the future.

Finally, we should not forget that in Chapter \ref{sec-higman} we gave a new program which witnesses Higman's lemma, that works not just for the $=_\BB$, but for \emph{any} WQO for which a realizer of $\sWQO(\preceq)$ can be given. Moreover, in contrast to \cite{Powell(2012.0)}, this realizer encodes a recursive algorithm which seems to do something fundamentally intuitive. The precise relationship between this algorithm and the many others which have been offered over the years is a question we leave open for now, although the presence of the control functional in the explicit open recursor leads me to conjecture that it is genuinely different to most of them. But for now we simply hope that, among other things, we have provided a little more insight into the computational meaning of Nash-Williams' elegant classical proof.\\

\noindent\textbf{Acknowledgements.} In developing the ideas of this paper I have benefited greatly from numerous illuminating conversations with Ulrich Berger, Paulo Oliva and Monika Seisenberger.

\bibliographystyle{plain}
\bibliography{/home/thomas/Dropbox/tp}
\end{document}